\crefname{equation}{}{}
\crefname{lemma}{Lemma}{Lemmas}
\crefname{page}{p.}{pp.}
\numberwithin{equation}{section}
\theoremstyle{plain}
\newtheorem{theorem}{Theorem}[section]
\newtheorem{proposition}{Proposition}[section]
\newtheorem{lemma}{Lemma}[section]
\newtheorem{corollary}{Corollary}[section]
\theoremstyle{definition}
\newtheorem{remark}{Remark}[section]
\def\now{%
\minute=\time%
\hour=\time \divide \hour by 60%
\hourMins=\hour \multiply\hourMins by 60%
\advance\minute by -\hourMins%
\zeroPadTwo{\the\hour}:\zeroPadTwo{\the\minute}%
}
\def\zeroPadTwo#1{\ifnum #1<10 0\fi#1}
\renewcommand{\cite}{\citet}
\def\^#1{\ifmmode {\mathaccent"705E #1} \else {\accent94 #1} \fi}
\def\~#1{\ifmmode {\mathaccent"707E #1} \else {\accent"7E #1} \fi}
\def\*#1{#1^\ast}
\edef\-#1{\noexpand\ifmmode {\noexpand\bar{#1}} \noexpand\else \-#1\noexpand\fi}
\def\>#1{\vec{#1}}
\def\.#1{\dot{#1}}
\def\atop{\@@atop}
\def\*#1{\mathscr{#1}}
\renewcommand{\leq}{\leqslant}
\renewcommand{\geq}{\geqslant}
\newcommand{\eps}{\varepsilon}
\renewcommand{\eps}{\varepsilon}
\newcommand{\eq}{\eqref}
\newcommand{\IE}{\mathbbm{E}}
\newcommand{\IP}{P}
\newcommand{\rel}{\mathop{\mathrm{relint}}\nolimits}
\newcommand{\Cov}{\mathop{\mathrm{Cov}}}
\newcommand{\IR}{\mathbb{R}}
\def\be#1{\begin{equation*}#1\end{equation*}}
\def\ben#1{\begin{equation}#1\end{equation}}
\def\bes#1{\begin{equation*}\begin{split}#1\end{split}\end{equation*}}
\def\besn#1{\begin{equation}\begin{split}#1\end{split}\end{equation}}
\def\ba#1{\begin{align*}#1\end{align*}}
\def\ban#1{\begin{align}#1\end{align}}
\def\norm#1{\Vert#1\Vert}
\def\abs#1{\left\vert#1\right\vert}
\def\mid{\vert}
\def\beqn#1\eeqn{\begin{align}#1\end{align}}
\def\beq#1\eeq{\begin{align*}#1\end{align*}}
\def\E{{\IE}}
\def\P{{\IP}}
\newcommand{\ul}[1]{\underline{#1}}
\newcommand{\mcl}[1]{\mathcal{#1}}
\DeclareMathOperator{\rank}{rank}
\def\blfootnote{\xdef\@thefnmark{}\@footnotetext}
\begin{document}

\title{
High-dimensional Central Limit Theorems by Stein's Method in the Degenerate Case
}
\author{Xiao Fang, Yuta Koike, Song-Hao Liu and Yi-Kun Zhao}
\date{\it The Chinese University of Hong Kong, University of Tokyo, Southern University of Science and Technology and Dalian University of Technology} 
\maketitle

\noindent{\bf Abstract:} 
In the literature of high-dimensional central limit theorems, there is a gap between results for general limiting correlation matrix $\Sigma$ and the strongly non-degenerate case.
For the general case where $\Sigma$ may be degenerate, under certain light-tail conditions, when approximating a normalized sum of $n$ independent random vectors by the Gaussian distribution $N(0,\Sigma)$ in multivariate Kolmogorov distance, the best-known error 
rate has been $O(n^{-1/4})$, subject to logarithmic factors of the dimension. For the strongly non-degenerate case, that is, when the minimum eigenvalue of $\Sigma$ is bounded away from 0, the error rate can be improved to $O(n^{-1/2})$ up to a $\log n$ factor. In this paper, we show that the $O(n^{-1/2})$ rate up to a $\log n$ factor can still be achieved in the degenerate case, provided that the minimum eigenvalue of the limiting correlation matrix of any three components is bounded away from 0.
We prove our main results using Stein's method in conjunction with previously unexplored inequalities for the integral of the first three derivatives of the standard Gaussian density over convex polytopes. These inequalities were previously known only for hyperrectangles. Our proof demonstrates the connection between the three-components condition and the third moment Berry--Esseen bound.




\medskip

\noindent{\bf AMS 2020 subject classification: }  60F05, 62E17

\noindent{\bf Keywords and phrases:} Central limit theorem, degenerate covariance matrix, high dimensions, 
Stein's method. 

\section{Introduction and main results}

High-dimensional central limit theorem (CLT) has attracted much attention both in theory and applications since the work of \cite{chernozhukov2013gaussian,chernozhukov2017central}. 
They proved that, under mild regularity assumptions, Gaussian approximation in multivariate Kolmogorov distance is valid even when the dimension $d$ is much larger than the sample size $n$. Such results are crucial in high-dimensional statistical inferences (\cite{belloni2018high}).

The currently best known general high-dimensionl CLT result is as follows. 
Let $X_1,\dots,X_n$ be independent mean-zero random vectors in $\mathbb R^d$, $d\geq 3$. Set $W=n^{-1/2}\sum_{i=1}^nX_i$.
Suppose $W$ has covariance matrix $\Sigma=(\Sigma_{jk})_{1\leq j, k\leq d}$ with $\ul\sigma^2:=\min_{1\leq j\leq d}\Sigma_{jj}>0$. Suppose also that there exists a constant $B>0$ such that $\E\exp(|X_{ij}|/B)\leq2$ and $n^{-1}\sum_{i=1}^n\E X_{ij}^4\leq B^2$ for all $1\leq i\leq n$, $1\leq j\leq d$, where $X_{ij}$ is the $j$-th component of $X_i$.
Then, according to Theorem 2.1 in \cite{chernozhuokov2022improved}, we have
\ben{\label{eq:cckk}
\rho(W,G):=\sup_{a, b\in \IR^d}|P(a\leq W\leq b)-P(a\leq G\leq b)| \leq \frac{cB^{1/2}}{n^{1/4}}(\log(dn))^{5/4},
}
where $G\sim N(0,\Sigma)$, 
$``\leq "$ denotes the component-wise inequality for vectors,
and $c$ is a constant depending only on $\ul\sigma^2$. 

Assuming that $\Sigma$ has unit diagonal entries (this is a mild condition because $\rho(W,G)$ is invariant under component-wise scaling) and that its smallest eigenvalue, $\sigma_*^2$, is bounded away from 0 (referred to as the strongly non-degenerate case),  a separate line of works by \cite{fang2021high,lopes2022central,kuchibhotla2020high,chernozhukov2020nearly} obtained faster convergence rates in the high-dimensional CLT.
For example, according to Corollary 2.1 of \cite{chernozhukov2020nearly}, if $|X_{ij}|\leq B$ for $1\leq i\leq n, 1\leq j\leq d$, then
\ben{\label{eq:cckoike}
\rho(W,G)\leq \frac{CB}{\sigma_*^2\sqrt{n}}(\log d)^{3/2}\log n,
}
where $C$ is a universal constant. This error rate is optimal up to the $\log n$ factor.
See \cite{lopes2020bootstrapping}, \cite{das2021central}, and \cite{fang2022sharp} for optimal rates in some other special situations, namely, the case that $\Sigma$ has decreasing and vanishing diagonal entries, the case of independent coordinates, and the case of sums of log-concave random vectors.

Comparing \eq{eq:cckk} and \eq{eq:cckoike}, we see a gap between the error rates $n^{-1/4}$ and $n^{-1/2}$. It has been an open problem  whether it is possible to improve the rate in the degenerate case. We take a first step in filling this gap by proving the following result:

\begin{theorem}\label{thm:fklz}
Let $n, d\geq 3$ be integers.
Let $X_1,\dots,X_n$ be independent mean-zero random vectors in $\mathbb R^d$. Let $W=n^{-1/2}\sum_{i=1}^nX_i$ and let $\Cov(W)$ be its covariance matrix. 
Let $\Sigma$ be a $d\times d$ covariance matrix such that for all distinct $1\leq j,k,l\leq d$:
\be{
\Sigma_{jj}=1, \quad \text{(without loss of generality)}
}
\ben{\label{eq:three}
\det(\Sigma^{j,k})\geq \alpha^2>0,\ \frac{\det(\Sigma^{j,k,l})}{\det(\Sigma^{j,k})}\geq \beta^2>0,\quad  \text{(any three components are non-degenerate)}
}
where $\Sigma^{j,k}$ is the $2\times 2$ sub-matrix of $\Sigma$ by intersecting the $j$th and $k$th rows with the $j$th and $k$th columns and $\Sigma^{j,k,l}$ the $3\times 3$ sub-matrix defined analogously in the obvious way.
Let $G\sim N(0, \Sigma)$.
Suppose that there exists a constant $B>0$ such that $\E\exp(|X_{ij}|/B)\le 2$ for all $1\leq i\leq n$, $1\leq j\leq d$.
Then we have
\besn{\label{eq:fklz}
\rho(W,G)\leq& \frac{C}{\alpha^2} \norm{\Cov(W)-\Sigma}_{\infty} (\log d)\log n+\frac{CB^3}{\alpha^2 \beta^2 \sqrt{ n}} (\log d)^{2}\sqrt{\log(dn)}(\log n)^4,
}
where $\norm{\cdot}_{\infty}$ denotes the infinity norm and $C$ is an absolute constant. 

\end{theorem}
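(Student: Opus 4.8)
\medskip
\noindent\textbf{Proof proposal.}

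\emph{Overall strategy and smoothing.} The plan is to apply Stein's method for $N(0,\Sigma)$ to a smoothed indicator of the hyperrectangle $\{a\le w\le b\}$ and to bound the first three derivatives of the Stein solution by reducing them to anti-concentration-type integrals of the derivatives of a Gaussian density over convex regions; the crucial point is that, after integrating out all but the coordinates actually differentiated, these integrals only involve the two- and three-dimensional marginals of $\Sigma$ and are therefore finite under \eqref{eq:three}, even though $\Sigma$ itself may be singular. Concretely, fix $a,b\in\IR^d$ and write $\{a\le w\le b\}$ as an intersection of $2d$ half-spaces. Approximate its indicator by $h=g\bigl(F_\lambda(\,\cdot\,)\bigr)$, where $F_\lambda$ is the log-sum-exp (soft-max) with parameter $\lambda$ of the corresponding $2d$ affine functions and $g$ smoothly approximates $\mathbf 1_{(-\infty,0]}$ with $\|g^{(r)}\|_\infty\lesssim\delta^{-r}$ for $r\le 3$. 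The soft-max is used because $\sum_j|\partial_jF_\lambda|\equiv1$, $\sum_{j,k}|\partial_j\partial_kF_\lambda|\lesssim\lambda$ and $\sum_{j,k,l}|\partial_j\partial_k\partial_lF_\lambda|\lesssim\lambda^2$, so that by the chain rule the derivative sums of $h$ do not grow polynomially in $d$. This $h$ approximates $\mathbf 1_{\{a\le w\le b\}}$ up to a layer of width $\eta\asymp\lambda^{-1}\log d+\delta$, and since $\Sigma_{jj}=1$, Nazarov's Gaussian anti-concentration inequality bounds the resulting error on the $N(0,\Sigma)$ side by $C\eta\sqrt{\log d}$. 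It therefore suffices to bound $|\E h(W)-\E h(G)|$ uniformly over such test functions; the parameters $\lambda,\delta$ (both of order $\sqrt n$ up to logarithmic factors) are chosen at the end.

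\emph{Stein expansion.} Let $f$ solve $\langle\Hess f(w),\Sigma\rangle-\langle w,\nabla f(w)\rangle=h(w)-\E h(G)$, with the Ornstein--Uhlenbeck (Mehler) representation $f(w)=-\int_0^\infty\bigl(\E h(e^{-t}w+\sqrt{1-e^{-2t}}\,G)-\E h(G)\bigr)\,dt$, so that $\E h(W)-\E h(G)=\E\bigl[\langle\Hess f(W),\Sigma\rangle-\langle W,\nabla f(W)\rangle\bigr]$. Writing $W^{(i)}=W-X_i/\sqrt n$, which is independent of $X_i$, using $\E X_i=0$ and Taylor-expanding $\nabla f$ around $W^{(i)}$ with second-order remainder, the right-hand side becomes
\[
\E\bigl[\langle\Hess f(W),\,\Sigma-\Cov(W)\rangle\bigr]-\frac1{2n^{3/2}}\sum_{i=1}^n\E\Bigl[\sum_{j,k,l}\partial_j\partial_k\partial_l f(\xi_i)\,X_{ij}X_{ik}X_{il}\Bigr]+R,
\]
where $\xi_i$ lies on the segment $[W^{(i)},W]$ and $R$ collects the lower-order replacement remainders (e.g.\ replacing $W^{(i)}$ by $W$ in the Hessian term), each again a combination of $\partial^{\le3}f$ evaluated along such segments and weighted by products of at most three coordinates of the $X_i$. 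Every term is controlled once we have \emph{uniform-in-$w$} bounds on $\sum_{j}|\partial_jf(w)|$, $\sum_{j,k}|\partial_j\partial_kf(w)|$ and $\sum_{j,k,l}|\partial_j\partial_k\partial_lf(w)|$; in particular, because those bounds are deterministic, the dependence of $\xi_i$ on $X_i$ causes no circularity and no iteration is needed.

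\emph{Derivative estimates --- the main obstacle.} Differentiating the Mehler formula, $\partial_j\partial_k\partial_lf(w)=-\int_0^\infty e^{-3t}\,\E[(\partial_j\partial_k\partial_lh)(e^{-t}w+\sqrt{1-e^{-2t}}\,G)]\,dt$, and similarly at orders one and two. For $t$ bounded away from $0$, transfer the derivatives from $h$ onto the Gaussian density, but only in the coordinates actually differentiated. This is legitimate even for singular $\Sigma$ because, by \eqref{eq:three}, the marginal law of $(G_j,G_k,G_l)$ is non-degenerate with density bounded by $C(\det\Sigma^{j,k,l})^{-1/2}$ where $\det\Sigma^{j,k,l}=\det\Sigma^{j,k}\cdot\bigl(\det\Sigma^{j,k,l}/\det\Sigma^{j,k}\bigr)\ge\alpha^2\beta^2$, and likewise $(G_j,G_k)$ has density $\le C\alpha^{-1}$. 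Conditioning on those coordinates and bounding the contribution of the remaining ones by $\|h\|_\infty\le1$ reduces $\E[(\partial_j\partial_k\partial_lh)(\cdots)]$ to an integral of the third derivative of a two- or three-dimensional Gaussian density over the (convex) sublevel set of $F_\lambda$; bounding such integrals --- summed over $j,k,l$ and combined with $\sum_j|\partial_jF_\lambda|\equiv1$ --- is exactly the new input (``integrals of the first three derivatives of the Gaussian density over convex polytopes'', extending what was previously available only for hyperrectangles), and yields a bound of order $(\det\Sigma^{j,k,l})^{-1}\le(\alpha\beta)^{-2}$ times a power of $\log d$ and of $\lambda$, with \emph{no} factor of $d$. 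For small $t$ one uses instead the crude estimate $\|\partial_j\partial_k\partial_lh\|_\infty$; matching the two regimes at $t\asymp\delta^2$ and integrating in $t$ produces the logarithmic-in-$n$ factors once $\delta\asymp n^{-1/2}$ up to logs. The upshot is uniform bounds of the schematic form $\sum_{j,k}|\partial_j\partial_kf(w)|\lesssim\alpha^{-2}(\log d)\log n$ and $\sum_{j,k,l}|\partial_j\partial_k\partial_lf(w)|\lesssim(\alpha\beta)^{-2}\cdot(\text{power of }\log d\text{, }\log n\text{ and }\lambda)$. I expect establishing these polytope inequalities in the (possibly) degenerate regime, with constants depending only on $\alpha,\beta$ rather than on $\Sigma^{-1}$, together with balancing the $t\to0$ singularity of the Mehler integral against the smoothing scale, to be the technical heart of the argument.

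\emph{Assembling the bound.} The covariance-mismatch term is at most $\norm{\Cov(W)-\Sigma}_\infty\sum_{j,k}|\partial_j\partial_kf(W)|$, which by the previous step contributes the first term in \eqref{eq:fklz}. For the third-moment term, the hypothesis $\E\exp(|X_{ij}|/B)\le2$ gives $\E|X_{ij}|^p\le C_pB^p$ for every $p$, hence $\E|X_{ij}X_{ik}X_{il}|\le CB^3$ by H\"older, and summing the uniform bound on $\sum_{j,k,l}|\partial_j\partial_k\partial_lf|$ over $i$ yields a contribution of order $n^{-1/2}B^3(\alpha\beta)^{-2}$ times a power of $\log d$, $\log n$ and $\sqrt{\log(dn)}$ (the latter entering through truncation of the $X_{ij}$ at scale $B\log(dn)$, the tails being negligible by the light-tail assumption); the remainders in $R$ are estimated the same way and are of the same or smaller order. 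Choosing $\lambda$ and $\delta$ to balance the soft-max accuracy against these estimates gives \eqref{eq:fklz}, the power $(\log n)^4$ reflecting the accumulated logarithmic cutoffs from the time integrals at the three derivative orders together with the smoothing-parameter optimization; undoing the smoothing via the Nazarov bound converts the estimate on $|\E h(W)-\E h(G)|$ into the claimed bound on $\rho(W,G)$.
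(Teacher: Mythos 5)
Your proposal shares the high-level ingredients of the paper's proof (Stein's method, OU/Mehler representation of the Stein solution, and the recognition that new ``integrals of Gaussian-density derivatives over polytope-like regions'' are needed), but it diverges in two places where I believe the argument genuinely breaks down.

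\textbf{The ``uniform bound, no iteration'' claim is wrong.} You assert that once you have \emph{uniform-in-$w$} bounds on $\sum_{j,k,l}|\partial_j\partial_k\partial_l f(w)|$, you can multiply by $\E|X_{ij}X_{ik}X_{il}|\le CB^3$ and sum over $i$ to get the $O(n^{-1/2})$ contribution, with ``no iteration needed.'' Trace the exponents: after integrating by parts onto the Gaussian, the uniform bound you can get from the polytope inequality is of the schematic form $\langle\tilde\xi_i^{\otimes3},\nabla^3\psi_t(w)\rangle\lesssim t^{-1/2}\delta^3(\log d)^{3/2}(\alpha\beta)^{-2}$, since $\int_t^\infty e^{-3s}(1-e^{-2s})^{-3/2}\,ds\asymp t^{-1/2}$. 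Summing over $i$ and taking $t\asymp\delta^2\log d$ (the only choice that balances the smoothing error) gives $n\cdot\delta^2(\log d)(\alpha\beta)^{-2}$, which with $\delta\asymp B\log n/\sqrt n$ is $\asymp B^2(\log n)^2(\log d)(\alpha\beta)^{-2}$ --- \emph{constant} order, not $O(n^{-1/2})$. The paper recovers the extra $n^{-1/2}$ precisely by not using a uniform bound: it splits the third-derivative term into a part where $\tilde W$ is in a thin boundary layer $A(\kappa_s)\setminus A(-\kappa_s)$ (whose probability is controlled via Nazarov's inequality \emph{plus} the unknown $\rho$) and a part where Lemma \ref{lem:vanish} kills the integral exponentially, producing the recursive inequality \eqref{somewhere} with $\rho$ on both sides, which is then solved by an appropriate choice of $t_0$. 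This localization-and-recursion step is essential, and your argument explicitly discards it.

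\textbf{The integration by parts for degenerate $\Sigma$ is not actually carried out.} You propose to ``transfer the derivatives from $h$ onto the Gaussian density, but only in the coordinates actually differentiated,'' by conditioning on $(G_j,G_k,G_l)$ and ``bounding the contribution of the remaining ones by $\|h\|_\infty\le1$.'' But $\partial_j\partial_k\partial_l h$ depends on \emph{all} coordinates of its argument, and after conditioning the remaining coordinates $G_{\mathrm{rest}}$ have a conditional mean that is linear in $(g_j,g_k,g_l)$; so differentiating in $g_j$ (to integrate by parts) produces $\partial_j h$ \emph{plus} a sum over $m$ of $\mu_{mj}\partial_m h$ terms, i.e.\ you do not land a clean derivative on the marginal density $\phi_{\Sigma^{j,k,l}}$. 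The paper sidesteps this entirely by first rotating to standard coordinates: with $\Sigma=V^{-1}(V^{-1})^\top$ and $\tilde W=VW$, the limit is $N(0,I_d)$ and the hyperrectangle becomes a convex polytope $\{x:x\cdot v_j\le b_j\}$, after which Lemma \ref{lem:AHT} and the divergence theorem give the needed bounds with constants depending only on the angles $\alpha,\beta$ between the $v_j$'s and not on $\Sigma^{-1}$. Without this change of variables, I don't see how to close your derivative-transfer step; and without the recursion, even the polytope bound does not give the claimed rate.
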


\begin{remark}
(a) Typically $\Sigma=\Cov(W)$ (we allow them to be different to facilitate applications to bootstrap approximations below) and
\cref{thm:fklz} provides a first high-dimensional CLT with optimal rate $O(n^{-1/2})$ (subject to logarithmic factors) for a general degenerate case.  
The condition $\Sigma_{jj}=1$ is only for simplicity of presentation and is not essential. By a coordinate-wise scaling, \cref{thm:fklz} translates into a result for the general case. This applies to \cref{prop:gauss} and \cref{thm:bootstrap} as well. 
Note that this operation affects the values of $B,\alpha$ and $\beta$. In particular, this condition essentially excludes the presence of nearly degenerate coordinates. Indeed, it is known that $\rho(W,G)$ could be bounded away from zero in such a situation; see Example 1 in \cite{Se86} for instance. 

(b) It remains an open problem to obtain the optimal dependence of the error bound on $\alpha, \beta, B$, and $\log d$. 
In fact, the appearance of $\alpha$ in the denominator is necessary (see \cref{rem:alpha}). Also, the appearance of $\beta$ in the denominator seems unavoidable using our method. Indeed, we will see in the proof (cf. \eq{eq:eta}) that we need to control the projection of an $\IR^d$ vector $\tilde X_i$ in certain other directions
besides those directions generating $X_{i1},\dots, X_{id}$.


(c) 
If the smallest eigenvalue $\sigma_*^2$ of $\Sigma$ is bounded away from 0, then \eq{eq:three} holds. Therefore, the condition of \cref{thm:fklz} is weaker than a strong non-degeneracy condition. See \cref{rem:sta} for statistical implications.
Note, however, the dependence of the upper bound on $B$ and $d$ is 
worse than Theorem 2.2 of \cite{chernozhukov2020nearly} for the strongly non-degenerate case.
To improve the result, we need refined techniques such as the exchangeable pair approach in Stein's method (which needs stronger conditions on the covariance matrix) and
the local stability result (cf. Eq.(9.1) of \cite{chernozhukov2020nearly}). 
It is unclear whether the stability result is still valid in the degenerate case because its proof relies on the product structure of hyperrectangles.

(d) To prove \cref{thm:fklz}, we follow the strategy of \cite{fang2021high} and \cite{lopes2022central}, which was thought to be applicable only to the strongly non-degenerate case. 
Besides an initial transformation of the problem to standard Gaussian approximation on convex polytopes (cf. \cref{sec:4.1}),
what's crucial in our new proof is a generalization of an inequality for the integral of derivatives of standard Gaussian density (see \cite{bentkus1990smooth,anderson1998edgeworth}) from hyperrectangles to polytopes (cf. \cref{lem:AHT}). The original inequality and its proof rely heavily on the product formula for the probability that a standard Gaussian vector takes values in a hyperrectangle.  For polytopes, we use the divergence theorem and associate relevant quantities to outer areas of polytopes, which generalizes the arguments in establishing Nazarov's inequality (cf. \cite{chernozhukov2017detailed}).
\end{remark}

\begin{remark}[Statistical implication]\label{rem:sta}
    It is worth mentioning that conditions in \eqref{eq:three} are empirically testable even for high-dimensional data with $d\gg n$ because we can consistently estimate the population matrix $\Sigma$ with respect to the infinity norm as long as $B=O(1)$ and $(\log d)^2=o(n)$; see the proof of \cref{thm:bootstrap}. 
    By contrast, without imposing an extra structural assumption on $\Sigma$, we cannot estimate $\Sigma$ with respect to the spectral norm accurately when $d\gg n$ (cf.~Theorem B.1 in \cite{LeLe18} and its proof). Therefore, it is generally impossible to empirically assess the strong non-degeneracy condition for high-dimensional data with $d\gg n$.
    This shows an advantage of our new result in statistical inference.
\end{remark}

Our method also works for dependent cases, for example, for sums of $m$-dependent random vectors.
Here, to facilitate applications to bootstrap approximations, we state a result assuming the existence of Stein kernel (cf. \cite{LeNoPe15}). 
We state the result as a $d$-dimensional standard Gaussian approximation on convex polytopes with $d$ facets and will explain how to transform the result into general Gaussian approximations in Kolmogorov distance in the proof of \cref{prop:gauss}.

In this paper, we use $\cdot$ to denote the inner product, and $\nabla$ the gradient. For an $r$-times differentiable function $f:\IR^d\to \IR$, we denote by $\nabla^r f(x)$ the $r$th derivative of $f$ at $x$. We use $\langle , \rangle$ to denote the dot product of tensors (sum of products of corresponding entries).

\begin{theorem}[Stein kernel bound] \label{thm:kernel}
Let $W\in \IR^d$, $d\geq 3,$ be a random vector with $\E W=0$. Suppose $W$ has a Stein kernel $\tau^W(\cdot): \IR^d\to \IR^{d\times d}$, that is,
\ben{\label{eq:steinkernel}
\E [W\cdot \nabla f(W)]=\E\langle \tau^W(W), \nabla^2 f(W)\rangle
}
for every smooth function $f: \IR^d\to \IR$ such that the expectations exist. 
Let $v_1,\dots, v_d\in \IR^d$ be distinct unit vectors, $b_1,\dots, b_d$ real numbers, and let
\ben{\label{eq:polytope}
A=\{x\in \IR^d:\  x\cdot v_j\leq b_j\ \text{for}\ j=1,\dots, d\}
}
be a convex polytope\footnote{Throughout this article, we allow convex polytopes to be unbounded.}. 
Then we have
\ben{\label{eq:kernelbound}
|P(W\in A)-P(Z\in A)|\leq C \Delta (\log d) (|\log \Delta|\vee 1),
}
where $C$ is an absolute constant, $Z\sim N(0, I_d)$,
\be{
\Delta:=\E \left[\max_{j,k} (|v_j^\top M v_j|+|v_j^\top M v_{jk}|+ |v_{jk}^\top M v_{jk}| )\right] , \quad M:=\tau^W(W)-I_d,
}
and the max is taken over all $1\leq j\ne k\leq d$ such that $v_k\ne - v_j$ and $v_{jk}\in \IR^d$ is the unit vector in the two-dimensional subspace spanned by $\{v_j, v_k\}$ with $v_{jk}\cdot v_j=0$ and $v_{jk}\cdot v_k>0$.
\end{theorem}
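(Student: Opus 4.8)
The plan is to run Stein's method for the $d$-dimensional standard Gaussian through the Ornstein--Uhlenbeck semigroup, in the spirit of \cite{fang2021high,lopes2022central}, while exploiting throughout that every function involved depends on $x\in\IR^d$ only through the $d$ linear forms $x\cdot v_1,\dots,x\cdot v_d$. Write $P_tg(x)=\E\,g(e^{-t}x+\sqrt{1-e^{-2t}}\,Z)$ for the semigroup, with generator $\mathcal Lg(x)=\tr(\nabla^2g(x))-x\cdot\nabla g(x)$. Fix a smoothing scale $\delta\in(0,1]$. Using a log-sum-exp regularisation of $\max_j(x\cdot v_j-b_j)$ at scale $\delta$ composed with a fixed smooth cutoff, construct $C^\infty$ functions $g^{\pm}\colon\IR^d\to[0,1]$, each a function of $(x\cdot v_1,\dots,x\cdot v_d)$ only, with $\mathbf 1_{A^{-\delta}}\le g^-\le\mathbf 1_A\le g^+\le\mathbf 1_{A^{+\delta}}$ (here $A^{\pm\delta}$ denotes $A$ with every facet displaced outward/inward by $\delta$) and $\sup_x\sum_{j,k}|\partial_{v_j}\partial_{v_k}g^{\pm}(x)|\le C\delta^{-2}\log d$. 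For $g\in\{g^-,g^+\}$ the Stein equation $\mathcal Lf=g-\E\,g(Z)$ is solved by $f=-\int_0^\infty(P_tg-\E\,g(Z))\,dt$, which is $C^\infty$ with bounded first and second derivatives; hence \cref{eq:steinkernel} applies to $f$ and, combined with the Stein equation, gives
\be{
\E\,g(W)-\E\,g(Z)=\E\bigl[\tr(\nabla^2f(W))-W\cdot\nabla f(W)\bigr]=-\E\,\langle M,\nabla^2f(W)\rangle,\qquad M:=\tau^W(W)-I_d.
}

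The next step extracts $\Delta$. Since $g$, hence $P_tg$, hence $f$, depends on $x$ only through $u=(x\cdot v_1,\dots,x\cdot v_d)$, we may write $f(x)=\widetilde f(x\cdot v_1,\dots,x\cdot v_d)$ and $\nabla^2f(x)=\sum_{j,k}c_{jk}(x)\,v_jv_k^\top$ with $c_{jk}(x)=\partial_j\partial_k\widetilde f(x\cdot v_1,\dots,x\cdot v_d)$, so that $\langle M,\nabla^2f(W)\rangle=\sum_{j,k}c_{jk}(W)\,v_j^\top Mv_k$. For $j\ne k$ with $v_k\ne-v_j$, resolving $v_k=(v_j\cdot v_k)v_j+(v_{jk}\cdot v_k)v_{jk}$ in the orthonormal basis $\{v_j,v_{jk}\}$ of $\mathrm{span}\{v_j,v_k\}$ gives $|v_j^\top Mv_k|\le|v_j^\top Mv_j|+|v_j^\top Mv_{jk}|$; the diagonal terms and the terms with $v_k=-v_j$ equal $|v_j^\top Mv_j|$, which, since $d\ge3$, is itself dominated by the contribution of some admissible pair. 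Hence $\max_{j,k}|v_j^\top Mv_k|$ is at most the random quantity inside the expectation defining $\Delta$, and, pulling the deterministic factor out,
\be{
|\E\,g(W)-\E\,g(Z)|=|\E\,\langle M,\nabla^2f(W)\rangle|\le\Delta\cdot\sup_{u\in\IR^d}\sum_{j,k}|\partial_j\partial_k\widetilde f(u)|.
}

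It remains to prove $\sup_u\sum_{j,k}|\partial_j\partial_k\widetilde f(u)|\le C(\log d)(|\log\delta|\vee1)$, and this is exactly where \cref{lem:AHT} is used. Differentiating $\widetilde f=-\int_0^\infty(\widetilde g_t-\E\,g(Z))\,dt$ under the integral (with $\widetilde g_t$ denoting $P_tg$ in the $u$-variables), it suffices to bound $\sum_{j,k}|\partial_j\partial_k\widetilde g_t(u)|$ uniformly in $u$ by $C(\log d)\,\kappa_\delta(t)$ with $\int_0^\infty\kappa_\delta(t)\,dt\le C(|\log\delta|\vee1)$. For small $t$ one transfers both derivatives onto $g$ and uses the $O(\delta^{-2}\log d)$ bound above, giving $\kappa_\delta(t)\lesssim e^{-2t}\delta^{-2}$. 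For larger $t$ one writes $P_tg(x)$ as an integral of the standard Gaussian density over a convex region bounded by hyperplanes with normals $v_1,\dots,v_d$ — which requires no non-degeneracy of $\mathrm{span}\{v_1,\dots,v_d\}$ — and applies the divergence theorem twice: $\partial_j\partial_k\widetilde g_t$ becomes $(1-e^{-2t})^{-1}$ times sums of integrals of the Gaussian density over the codimension-one facets and codimension-two faces of that region, the latter carrying precisely the directions $v_{jk}$, with the oblique-angle factors arranged to cancel so that $v_{jk}$ is the right direction to record. \cref{lem:AHT} — the polytope extension of the Bentkus--Anderson inequalities (\cite{bentkus1990smooth,anderson1998edgeworth}) — bounds the resulting sums over $j,k$ of these face integrals by $C\log d$, yielding $\kappa_\delta(t)\lesssim e^{-2t}(\delta^2+1-e^{-2t})^{-1}$; integrating over $t\in(0,\infty)$ (the two regimes meeting at $t\asymp\delta^2$) gives the stated bound. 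Combining the three displays, $|\E\,g^{\pm}(W)-\E\,g^{\pm}(Z)|\le C\Delta(\log d)(|\log\delta|\vee1)$; since $\mathbf 1_{A^{-\delta}}\le g^-\le\mathbf 1_A\le g^+\le\mathbf 1_{A^{+\delta}}$ and a Nazarov-type anti-concentration bound for polytopes with $d$ facets (the first-order instance of \cref{lem:AHT}) gives $P(Z\in A^{+\delta})-P(Z\in A^{-\delta})\le C\delta\log d$, we obtain $|P(W\in A)-P(Z\in A)|\le C\Delta(\log d)(|\log\delta|\vee1)+C\delta\log d$; taking $\delta=\min\{\Delta,1\}$ (and the trivial bound $1$ when $\Delta>1$) yields \cref{eq:kernelbound}. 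Only the first- and second-order instances of \cref{lem:AHT} are needed here; the third-order instance is used elsewhere in the paper.

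The genuine obstacle is \cref{lem:AHT} itself, which the present argument is allowed to invoke; within this argument the delicate point is the step just described, namely arranging the twofold application of the divergence theorem so that $\partial_j\partial_k\widetilde g_t$ localises onto codimension-one and -two faces carrying the correct unit directions $v_j$ and $v_{jk}$, with the oblique-angle factors cancelling so that no conditioning of $\mathrm{span}\{v_1,\dots,v_d\}$ enters the estimate, and verifying that the log-sum-exp smoothing simultaneously (i) is a function of $x\cdot v_1,\dots,x\cdot v_d$ only, (ii) sandwiches $\mathbf 1_A$ within $\delta$, and (iii) has $O(\delta^{-2}\log d)$ second derivatives. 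Handling unbounded $A$ and coinciding or antipodal normals is routine.
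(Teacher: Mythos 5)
Your high-level architecture (Ornstein--Uhlenbeck smoothing, Stein equation, Stein kernel, then \cref{lem:AHT}) matches the paper's, but there is a genuine gap in the central step where you extract $\Delta$. You decompose $\nabla^2 f(x)=\sum_{j,k}c_{jk}(x)\,v_jv_k^\top$ with $c_{jk}=\partial_j\partial_k\widetilde f$, then bound
\be{
|\E\langle M,\nabla^2 f(W)\rangle|\le \E\bigl[\max_{j,k}|v_j^\top Mv_k|\bigr]\cdot\sup_u\sum_{j,k}|\partial_j\partial_k\widetilde f(u)|,
}
and claim the last factor is $O((\log d)(|\log\delta|\vee1))$ via \cref{lem:AHT}. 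That last claim is false in the degenerate case, and \cref{lem:AHT} does not prove it. Consider two nearly parallel facet normals $v_j,v_k$ with angle $\theta$: writing $\Phi(b)=P(\sigma Z\cdot v_m\le b_m\ \forall m)$, the mixed partial $\partial_j\partial_k\Phi$ equals (after marginalising the other coordinates) the joint Gaussian density of the two correlated coordinates $Z\cdot v_j,Z\cdot v_k$, which is of order $1/\sin\theta$. Hence $\sup_u\sum_{j,k}|\partial_j\partial_k\widetilde f(u)|$ is \emph{not} uniformly bounded; it blows up as the polytope degenerates. In other words, your decomposition converts $\nabla^2f$ into coordinates dual to $(v_1,\dots,v_d)$, whose conditioning you then implicitly pay for. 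The paper's remark after \cref{lem:AHT} flags exactly this: the ``absolute value inside the sum'' version that you are invoking is known only for hyperrectangles (orthogonal normals), and the whole point of \eq{eq:AHTdeg2} is that the second-order bound for general polytopes holds \emph{only} when the absolute value stays outside the integral, i.e., when the matrix $M$ is kept paired with the measure.

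The paper avoids this by never expanding $\nabla^2\psi_t$ in the $v_jv_k^\top$ basis. It uses $\psi_t(w)=-\int_t^\infty T_s\tilde h(w)\,ds$ (smoothing built into the lower limit $t$, no explicit log-sum-exp surrogates) and the Stein kernel identity to write $\E T_t\tilde h(W)$ as an $s$-integral of $\E\bigl[\int_{(A-e^{-s}W)/\sqrt{1-e^{-2s}}}\langle\tau^W(W)-I_d,\nabla^2\phi_d(z)\rangle\,dz\bigr]$, then applies \eq{eq:AHTdeg2} with $M$ still inside. The divergence theorem in \cref{lem:AHT} expresses that integral in the \emph{per-facet orthonormal} frame $\{v_j,v_{jk}\}$ rather than $\{v_j,v_k\}$, which is what makes the face-integral coefficients summable to $O(\log d)$ without any conditioning of $(v_1,\dots,v_d)$; passing from $\{v_j,v_{jk}\}$ to $\{v_j,v_k\}$, as your decomposition does, reintroduces $1/\sin\theta_{jk}$ factors. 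The theorem you are proving imposes no non-degeneracy on the $v_j$'s, and indeed Theorem \ref{thm:kernel} (as opposed to \eq{eq:AHTdeg3}) must hold with no $\alpha,\beta$ constants; your route would only work in the well-conditioned case and defeats the purpose of the result. Also, strictly speaking, if the $v_j$ do not form a basis of $\IR^d$ the parametrisation $f(x)=\widetilde f(Vx)$ is not unique, so $c_{jk}$ is not even well defined; this is minor compared with the blow-up issue but is another symptom of the same problem.
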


In the special case that $A$ is a hyperrectangle, we have $v_{jk}=v_{k}$(if $v_k\ne -v_j$) and \cref{thm:kernel} gives the same result as Theorem 1.1 of \cite{fang2021high}.
Applying \cref{thm:kernel} to (possibly degenerate) Gaussian-to-Gaussian comparison, we obtain the following result.

\begin{proposition}[Gaussian-to-Gaussian Comparison]\label{prop:gauss}
Let $Z_1$ and $Z_2$ be centered Gaussian random vectors in $\IR^d$, $d\geq 3$, with covariance matrices $\Sigma^{(1)}$ and $\Sigma$, respectively.
Suppose $\Sigma_{jj}=1$ and $1-\Sigma_{jk}^2\geq \alpha^2>0$ for all $1\leq j\ne k\leq d$.
Then we have
\ben{\label{eq:gauss}
\rho(Z_1, Z_2):=\sup_{a, b\in \IR^d}\left|P(a\leq Z_1\leq b)-P(a\leq Z_2\leq b)  \right|\leq \frac{C}{\alpha^2}\Delta (\log d) (|\log \Delta|\vee 1),
}
where $C$ is an absolute constant and $\Delta=\max_{1\leq j,k\leq d}|\Sigma^{(1)}_{jk}-\Sigma_{jk}|$. 
\end{proposition}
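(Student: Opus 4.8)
The plan is to derive \cref{prop:gauss} from \cref{thm:kernel} by realizing the two hyperrectangle probabilities as probabilities of a single convex polytope $A$ with $2d$ facets in $\IR^{2d}$: under the standard Gaussian reference, $A$ should reproduce $P(a\leq Z_2\leq b)$, while under an auxiliary Gaussian $W$ whose (constant) Stein kernel is close to $I_{2d}$, $A$ should reproduce $P(a\leq Z_1\leq b)$, so that the discrepancy $\Sigma^{(1)}-\Sigma$ feeds directly into the quantity $\Delta$ of \cref{thm:kernel}.

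Concretely, I would first assume $\Sigma$ non-degenerate, put $L:=\Sigma^{1/2}$, $\ell_j:=Le_j$ (so $\ell_j\cdot\ell_k=\Sigma_{jk}$, $\norm{\ell_j}=1$, $L^{-1}\ell_j=e_j$), and take
\be{
A:=\bklg{(x,y)\in\IR^d\times\IR^d:\ \ell_j\cdot x\leq b_j\ \text{and}\ (-\ell_j)\cdot x\leq -a_j\ \text{for}\ j=1,\dots,d},
}
a polytope in $\IR^{2d}$ with $2d$ facets whose distinct unit normals are $\pm(\ell_j,0)$ (distinctness from $|\Sigma_{jk}|<1$). For $Z=(Z',Z'')\sim N(0,I_{2d})$ one has $\ell_j\cdot Z'=(LZ')_j$, hence $P(Z\in A)=P(a\leq Z_2\leq b)$; and for $W=(W_1,W_2)\sim N(0,\Theta)$ with $\Theta:=\diag\klr{L^{-\top}\Sigma^{(1)}L^{-1},\,I_d}\succeq 0$ one has $L^\top W_1\sim N(0,\Sigma^{(1)})$, hence $P(W\in A)=P(a\leq Z_1\leq b)$. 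Since a centered Gaussian has constant Stein kernel equal to its covariance, $M:=\tau^W(W)-I_{2d}=\Theta-I_{2d}$, whose only nonzero block is $L^{-\top}(\Sigma^{(1)}-\Sigma)L^{-1}$.

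The remaining step is to bound the quantity $\Delta$ of \cref{thm:kernel} in this configuration, which I expect to be short. Writing $D:=\Sigma^{(1)}-\Sigma$ (so $\max_{j,k}|D_{jk}|=\Delta$ and $D_{jj}=0$) and using $L^{-1}\ell_j=e_j$, a facet normal $v=\pm(\ell_l,0)$ gives $v^\top M v=D_{ll}=0$, while for two normals built from $\ell_l,\ell_{l'}$ with $l\neq l'$ the associated vector is $v_{jk}=\pm(\ell_{l'}-\Sigma_{ll'}\ell_l,0)/\sqrt{1-\Sigma_{ll'}^2}$, for which $|v^\top M v_{jk}|=|D_{ll'}|/\sqrt{1-\Sigma_{ll'}^2}\leq\Delta/\alpha$ and $|v_{jk}^\top M v_{jk}|=2|\Sigma_{ll'}D_{ll'}|/(1-\Sigma_{ll'}^2)\leq 2\Delta/\alpha^2$, using $1-\Sigma_{ll'}^2\geq\alpha^2$ and $\alpha\leq 1$. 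So the relevant quantity is at most $3\Delta/\alpha^2$, and applying \cref{thm:kernel} in dimension $2d$ (with $2d$ facets) yields $\rho(Z_1,Z_2)\leq C\alpha^{-2}\Delta\,\log(2d)\,(|\log(3\Delta/\alpha^2)|\vee 1)$; a brief cleanup (depending on whether $\Delta\leq\alpha^2/3$, so that either $|\log(3\Delta/\alpha^2)|\leq|\log\Delta|$ or the claimed bound already exceeds $1$, together with $\log(2d)\leq C\log d$) gives the stated estimate. The degenerate case I would recover by running this with $\Sigma$ replaced by $\Sigma_\eps:=(\Sigma+\eps I_d)/(1+\eps)$ and letting $\eps\downarrow 0$, using $1-(\Sigma_\eps)_{jk}^2\geq 1-\Sigma_{jk}^2\geq\alpha^2$ and the fact that, for fixed $a,b$, the map $\Sigma\mapsto P(a\leq N(0,\Sigma)\leq b)$ is continuous because the marginals are $N(0,1)$ and assign no mass to $\partial[a,b]$.

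The main obstacle, I expect, is not the $\Delta$-estimate but setting up this reduction correctly: passing to $\IR^{2d}$ so that a $2d$-facet hyperrectangle fits the format of \cref{thm:kernel} (number of facets equal to the dimension), and coping with a possibly degenerate $\Sigma$, for which the natural auxiliary $W$ uses $L^{-1}=\Sigma^{-1/2}$. The regularization $\Sigma\rightsquigarrow\Sigma_\eps$ together with atomlessness of the one-dimensional marginals handles the latter, but verifying that the $(\log d)(|\log\Delta|\vee 1)$ factor passes to the limit cleanly (e.g.\ for $\Delta$ small but positive) will take a little care.
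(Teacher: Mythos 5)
Your proposal follows essentially the same approach as the paper: factor $\Sigma$ by a square root, recast both probabilities as probabilities that a $2d$-facet polytope in $\IR^{2d}$ contains a Gaussian with constant Stein kernel, and read the bound off from \cref{thm:kernel} by computing $v_j^\top M v_j$, $v_j^\top M v_{jk}$, $v_{jk}^\top M v_{jk}$ exactly as in the displays around \eq{eq:eps}. (The paper first treats the half-open case $a_j=-\infty$ in $\IR^d$ and then concatenates with an independent $N(0,I_d)$; you build the $\IR^{2d}$ polytope from the outset, which is a cosmetic rearrangement.) One small slip: $D_{jj}=\Sigma^{(1)}_{jj}-1$ need not vanish, since the proposition only normalizes $\Sigma$, not $\Sigma^{(1)}$ — this matters in the bootstrap application, where $\Sigma^{(1)}=\Sigma_n$ has non-unit diagonal. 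Keeping the $D_{ll}$ terms, as the paper does in \eq{eq:eps}, still yields $|v^\top M v|\leq\Delta$, $|v^\top M v_{jk}|\leq 2\Delta/\alpha$, $|v_{jk}^\top M v_{jk}|\leq 4\Delta/\alpha^2$, so the $O(\Delta/\alpha^2)$ bound and hence your conclusion are unaffected; only your intermediate constant changes.
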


\begin{remark}\label{rem:alpha}
In Proposition 2.1 of \cite{chernozhuokov2022improved}, it was shown that 
the left-hand side of \eq{eq:gauss} is bounded by $C\sqrt{\Delta} (\log d)$. Therefore, \cref{prop:gauss} presents a significant improvement, provided that $\alpha^2$ is bounded away from 0. 
We remark that Proposition B.1 of \cite{chernozhuokov2022improved} shows that there exists an example where $C(\Delta/\alpha)(\log d)$ gives a lower bound for $\rho(Z_1,Z_2)$. So the non-degeneracy condition on $\alpha^2$ is necessary to achieve the improved error rate while it is unclear whether the dependence on $\alpha$ of our bound is optimal or not.
\end{remark}

\cref{prop:gauss} will help us to obtain $O(n^{-1/2})$ error rates in bootstrap approximations in degenerate cases, provided that $\alpha^2$ is bounded away from 0.
A motivation for bootstrap approximation is that the covariance matrix $\Sigma$ is in practice typically unknown and we need to estimate $P(a\leq G\leq b)$ in \eq{eq:fklz} from the sample $X_1,\dots, X_n$.
We refer to \cite{chernozhukov2023high} for relevant techniques and known results of bootstrap approximations in high-dimensions.
Here, we only consider Gaussian multiplier bootstrap and similar results can be obtained for general multiplier bootstraps as well as for the empirical bootstrap.

Let $\xi_1,\dots, \xi_n$ be i.i.d.\ $N(0,1)$ random variables that are independent of $X:=(X_1,\dots, X_n)$. Denote $\bar X:=(\bar X_1,\dots, \bar X_d)^\top=n^{-1}\sum_{i=1}^n  X_i$ and consider the Gaussian multiplier bootstrap version of $W$:
\be{
W^\xi:=\frac{1}{\sqrt{n}}\sum_{i=1}^n\xi_i(X_i-\bar X).
}
Let $\Sigma=\Cov(W)$.
We are interested in bounding
\be{
\rho^\xi:=\sup_{a, b\in \IR^d} |P(a\leq W^\xi\leq b| X)-P(a\leq G\leq b)|,\quad G\sim N(0,\Sigma).
}
Following the proof of Corollary 3.1 of \cite{chernozhukov2020nearly} and using our \cref{prop:gauss}, we obtain the following result (see \cref{sec:pr-boot} for details).

\begin{theorem}[Gaussian multiplier bootstrap approximation]\label{thm:bootstrap}
Under the conditions of \cref{thm:fklz}, allow $\beta^2$ to be 0 and suppose $\Sigma=\Cov(W)$. 
Then there exists a universal constant $C>0$ such that, for any $\gamma\in(0,1)$, we have with probability at least $1-\gamma$,
\be{
\rho^\xi\leq \frac{CB^2 (\log(d/\gamma))^{3/2} \log n}{\alpha^2  \sqrt{n}}.
}
\end{theorem}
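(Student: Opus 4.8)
The plan is to reduce the problem to a Gaussian-to-Gaussian comparison via \cref{prop:gauss}. Conditionally on $X$, the bootstrap statistic $W^\xi = n^{-1/2}\sum_{i=1}^n \xi_i(X_i-\bar X)$ is exactly a centered Gaussian vector with covariance matrix $\hat\Sigma := n^{-1}\sum_{i=1}^n (X_i-\bar X)(X_i-\bar X)^\top$, the sample covariance matrix. Thus $\rho^\xi$ is the Kolmogorov distance between $N(0,\hat\Sigma)$ and $N(0,\Sigma)$. First I would normalize: replace $\hat\Sigma$ by its correlation version $\hat\Sigma' = D^{-1/2}\hat\Sigma D^{-1/2}$ where $D = \diag(\hat\Sigma)$, noting that $\rho(\cdot,\cdot)$ is invariant under coordinatewise scaling and that $\Sigma$ already has unit diagonal. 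Then apply \cref{prop:gauss} with $\Sigma^{(1)} = \hat\Sigma'$ and the given $\Sigma$ (whose pairwise nondegeneracy $1 - \Sigma_{jk}^2 \geq \alpha^2$ follows from \eqref{eq:three}, since $\det(\Sigma^{j,k}) = 1 - \Sigma_{jk}^2$), obtaining
\be{
\rho^\xi \leq \frac{C}{\alpha^2}\,\Delta\,(\log d)(|\log\Delta|\vee 1),\qquad \Delta = \max_{j,k}\babs{\hat\Sigma'_{jk} - \Sigma_{jk}}.
}

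The core estimate is then a high-probability bound on $\Delta$, i.e. on $\norm{\hat\Sigma' - \Sigma}_\infty$. The route is: (i) bound $\norm{\hat\Sigma - \Sigma}_\infty$ where $\hat\Sigma$ is the raw sample covariance, and (ii) pass from $\hat\Sigma$ to the normalized $\hat\Sigma'$, absorbing the error from rescaling by the (random) diagonal entries. For (i), each entry $\hat\Sigma_{jk} - \Sigma_{jk}$ is (up to the $\bar X_j \bar X_k$ correction, which is lower order) an average of $n$ independent centered variables $X_{ij}X_{ik} - \E X_{ij}X_{ik}$; the sub-exponential tail $\E\exp(|X_{ij}|/B)\leq 2$ implies $X_{ij}X_{ik}$ has a sub-Weibull (order $1/2$) tail, so a Bernstein-type inequality gives $|\hat\Sigma_{jk}-\Sigma_{jk}| \lesssim B^2(\sqrt{\log(d/\gamma)/n} + (\log(d/\gamma))^2/n)$ for a single pair with probability $1-\gamma/d^2$; a union bound over the $O(d^2)$ pairs yields, with probability at least $1-\gamma/2$,
\be{
\norm{\hat\Sigma - \Sigma}_\infty \leq C B^2\klr{\sqrt{\frac{\log(d/\gamma)}{n}} + \frac{(\log(d/\gamma))^2}{n}} \leq C'B^2\sqrt{\frac{\log(d/\gamma)}{n}},
}
where the last step uses the implicit regime $(\log(d/\gamma))^2 \lesssim n$ (outside this regime the claimed bound exceeds $1$ and is vacuous). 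For (ii), on the same event each $\hat\Sigma_{jj} \in [1 - \delta, 1+\delta]$ with $\delta = \norm{\hat\Sigma-\Sigma}_\infty$ bounded (say by $1/2$), so $D^{-1/2}$ is well-defined and $\norm{D^{-1/2}\hat\Sigma D^{-1/2} - \hat\Sigma}_\infty \lesssim \delta$ by a direct expansion; hence $\Delta = \norm{\hat\Sigma' - \Sigma}_\infty \leq C B^2\sqrt{\log(d/\gamma)/n}$ with probability $\geq 1-\gamma/2$.

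Substituting this bound on $\Delta$ into the \cref{prop:gauss} estimate gives, with probability at least $1-\gamma$ (after combining the two $\gamma/2$ events, or just tracking one event since (ii) holds deterministically given (i)),
\be{
\rho^\xi \leq \frac{C}{\alpha^2}\cdot B^2\sqrt{\frac{\log(d/\gamma)}{n}}\cdot(\log d)\cdot(|\log\Delta|\vee 1).
}
It remains to absorb the logarithmic factor $|\log\Delta|\vee 1$. Since $\Delta \geq 1/n$ on the relevant event (or the bound is trivial), $|\log\Delta| \lesssim \log n$; and $\log d \cdot \log(d/\gamma)^{1/2} \lesssim (\log(d/\gamma))^{3/2}$. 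Collecting the factors yields $\rho^\xi \leq C B^2 \alpha^{-2} (\log(d/\gamma))^{3/2}(\log n)/\sqrt n$, as claimed. Note $\beta$ plays no role here: the three-components condition is not needed because \cref{prop:gauss} only requires pairwise nondegeneracy, which is why the theorem statement permits $\beta^2 = 0$.

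The main obstacle is step (i)—the concentration of the sample covariance entries in infinity norm under only a sub-exponential moment assumption on the $X_{ij}$. One must handle products of sub-exponential variables (which are heavier-tailed, sub-Weibull of order $1/2$) via a Bernstein inequality with the correct interplay of a Gaussian regime and a heavier-tailed regime, and verify that the centering correction $\bar X_j\bar X_k = O_P((\log(d/\gamma))/n)$ (itself controlled by a maximal sub-exponential bound on $\bar X_j$) is genuinely lower order. This is exactly the estimate underlying \cref{rem:sta} and is essentially Corollary 3.1 of \cite{chernozhukov2020nearly}; the only new ingredient is the substitution of \cref{prop:gauss} in place of the Gaussian comparison bound used there, which is what upgrades the rate from the general case to $O(n^{-1/2})$ modulo logs.
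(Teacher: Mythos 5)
Your high-level strategy matches the paper's: apply \cref{prop:gauss} conditionally on $X$, then control $\norm{\Sigma_n-\Sigma}_\infty$ via concentration under the sub-exponential hypothesis, and absorb the $|\log\Delta|$ factor into $\log n$. Your concentration machinery (Bernstein-type tail for sub-Weibull products of order $1/2$) is a legitimate alternative to the paper's $L^p$-moment route via Lemma 2.1 of Fang--Koike (2022); both yield $\norm{\Sigma_n-\Sigma}_\infty\lesssim B^2\sqrt{\log(d/\gamma)/n}$ with the stated probability.

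However, your normalization step is both unnecessary and incorrectly justified, and it constitutes a genuine flaw. You propose replacing $\hat\Sigma$ by its correlation version $\hat\Sigma'=D^{-1/2}\hat\Sigma D^{-1/2}$ and applying \cref{prop:gauss} with $\Sigma^{(1)}=\hat\Sigma'$, arguing by coordinatewise scale-invariance of $\rho(\cdot,\cdot)$. But scale-invariance rescales \emph{both} Gaussian vectors: if you replace $W^\xi$ by $D^{-1/2}W^\xi$ (covariance $\hat\Sigma'$), you must also replace $G$ by $D^{-1/2}G$, whose covariance $D^{-1/2}\Sigma D^{-1/2}$ no longer has unit diagonal---so the hypothesis $\Sigma_{jj}=1$ in \cref{prop:gauss} fails. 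The fix (and what the paper actually does) is to observe that \cref{prop:gauss} imposes \emph{no} constraint on $\Sigma^{(1)}$, only on the target $\Sigma$; so you may take $\Sigma^{(1)}=\Sigma_n=n^{-1}\sum_i(X_i-\bar X)(X_i-\bar X)^\top$ directly, with no normalization, getting $\rho^\xi\leq\frac{C}{\alpha^2}\Delta_n^*(\log d)(|\log\Delta_n^*|\vee 1)$ for $\Delta_n^*=\norm{\Sigma_n-\Sigma}_\infty$. This dispenses entirely with your step (ii). A secondary, smaller issue: your claim that ``$\Delta\geq 1/n$ on the relevant event'' is unfounded---the concentration event gives an upper bound, not a lower one. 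The clean argument is to use that $x\mapsto x(|\log x|\vee 1)$ is increasing and substitute the upper bound $\delta_n\asymp B^2\sqrt{\log(d/\gamma)/n}$, then note $\delta_n\geq c/\sqrt n$ because $2B^2\geq 1$ (a consequence of $\Sigma_{jj}=1$ and $\E\exp(|X_{ij}|/B)\leq 2$), giving $|\log\delta_n|\leq\frac12\log n$.
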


\cref{thm:bootstrap}, combined with \cref{thm:fklz} (choosing $\Sigma=\Cov(W)$), gives explicit error bounds in approximating $P(W\leq b)$ by the practical quantity $P(W^\xi\leq b\mid X)$.



\paragraph{Organization.} The paper is organized as follows. Key lemmas are given in \cref{sec:lem}. Their proofs are deferred to \cref{sec:prooflem}. We prove Theorems \ref{thm:kernel}--\ref{thm:bootstrap} and \cref{prop:gauss} in \cref{sec:proof1} and \cref{thm:fklz} in \cref{sec:proof2}. Some standard analysis are contained in \cref{sec:app}.

\paragraph{Notation.} Besides the notation introduced above \cref{thm:kernel}, we use $\otimes$ to denote tensor product, $\phi_d(\cdot)$ the $d$-dimensional standard normal density function. We use $C$ to denote positive, absolute constants, which may defer from line to line.

\section{Lemmas}\label{sec:lem}


We first state three lemmas that are needed in the proofs of \cref{thm:fklz,thm:kernel}. 
In the remaining of this section, let 
\be{
A=\{x\in \IR^d:\  x\cdot v_j\leq b_j\ \text{for}\ j=1,\dots, d\}
}
be a convex polytope, where $v_1,\dots, v_d$ are distinct unit vectors in $\IR^d$, $d\geq 3$, and $b_1,\dots, b_d$ are real numbers. Let 
\be{
V_1=\{v_1,\dots, v_d\}.
}

The following two results are equivalent forms of Nazarov's inequality (see \cite{chernozhukov2017detailed}).
\begin{lemma}\label{cor1}
Let $Z$ be a standard normal random vector in $\mathbb{R}^d$, $d\geq 3$. Then we have 
\begin{equation}
    P(Z \cdot v_j\leq b_j+\epsilon, 1\leq j\leq d)-P(Z \cdot v_j\leq b_j, 1\leq j\leq d)\leq \epsilon \left(\sqrt{2\log d}+2\right).
\end{equation}
\end{lemma}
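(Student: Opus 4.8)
The plan is to derive the one-sided shift estimate directly from the classical form of Nazarov's inequality as stated in \cite{chernozhukov2017detailed}, which asserts that for a standard Gaussian vector $Z$ in $\IR^d$ and any convex set $A$, the $\epsilon$-enlargement $A^\epsilon = \{x : d(x, A) \le \epsilon\}$ satisfies $P(Z \in A^\epsilon) - P(Z \in A) \le \epsilon(\sqrt{2\log d} + 2)$ whenever $A$ is an intersection of at most $d$ half-spaces whose normals are unit vectors. So the first step is to identify the set appearing on the left-hand side. Writing $A = \{x : x \cdot v_j \le b_j,\ 1 \le j \le d\}$ and $A_\epsilon := \{x : x \cdot v_j \le b_j + \epsilon,\ 1 \le j \le d\}$, I would observe that $A_\epsilon$ is again an intersection of $d$ half-spaces with unit normals $v_1,\dots,v_d$, merely with the offsets shifted.

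The key geometric step is the containment $A_\epsilon \subseteq A^\epsilon$: if $x \cdot v_j \le b_j + \epsilon$ for all $j$, then the point $x$ is within Euclidean distance $\epsilon$ of $A$. Indeed, for each $j$ the signed distance of $x$ from the hyperplane $\{y : y \cdot v_j = b_j\}$ is $x\cdot v_j - b_j \le \epsilon$ (using $|v_j| = 1$), so $x$ lies in the $\epsilon$-enlargement of each defining half-space; since $A$ is the intersection of these half-spaces and each is convex, a standard fact (or a direct projection argument) gives $d(x, A) \le \epsilon$. Hence $P(Z \in A_\epsilon) \le P(Z \in A^\epsilon)$.

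Combining these,
\be{
P(Z\cdot v_j \le b_j + \epsilon,\ 1\le j\le d) - P(Z\cdot v_j \le b_j,\ 1\le j\le d) = P(Z\in A_\epsilon) - P(Z\in A) \le P(Z\in A^\epsilon) - P(Z\in A) \le \epsilon\bklr{\sqrt{2\log d}+2},
}
which is the claimed bound. The only mild subtlety is the containment $A_\epsilon \subseteq A^\epsilon$; although it is intuitively obvious, its clean justification uses that the metric projection onto the convex polytope $A$ moves a point in $A_\epsilon$ by at most $\epsilon$ — one can argue, for instance, by noting that for $x \in A_\epsilon$ the point $x - \epsilon v_{j_0}$ (with $j_0$ the index of the most violated constraint) need not lie in $A$, so instead one invokes the general fact that the $\epsilon$-neighborhood of a finite intersection of half-spaces equals the set of points at distance $\le \epsilon$ from all of them only after taking the intersection, which follows from convexity of $A$ and the fact that the distance function $d(\cdot, A)$ is the maximum over supporting hyperplanes of signed distances for a polytope — or, most simply, one cites that this enlargement comparison is exactly the form in which Nazarov's inequality is invoked in \cite{chernozhukov2017detailed}. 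I expect this containment to be the main (and essentially only) obstacle, and it is a routine convex-geometry fact rather than a genuine difficulty.
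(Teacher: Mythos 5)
Your argument hinges on the containment $A_\epsilon\subseteq A^\epsilon$, where $A_\epsilon:=\{x:x\cdot v_j\le b_j+\epsilon,\ \forall j\}$ and $A^\epsilon:=\{x:d(x,A)\le\epsilon\}$, and this containment is \emph{false} in general. The $\epsilon$-neighborhood of an intersection of half-spaces is not the intersection of the $\epsilon$-shifted half-spaces; the latter can be much larger when two of the normals are close to each other or to being anti-parallel. Concretely, in $\IR^2$ take $v_1=(1,0)$, $v_2=(-\cos\theta,\sin\theta)$, $b_1=b_2=0$. Then $A$ is a thin wedge with apex at the origin and opening angle $\theta$, while the corner of $A_\epsilon$ is at $\bigl(\epsilon,\ \epsilon(1+\cos\theta)/\sin\theta\bigr)$, whose distance to $A$ is on the order of $\epsilon/\theta$ as $\theta\to 0$. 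So for small $\theta$, $A_\epsilon\not\subseteq A^\epsilon$; no lower bound on the pairwise angles of the $v_j$ is assumed in \cref{cor1}, so the reduction to the metric enlargement cannot be made. (Your own closing paragraph signals unease here; that unease is warranted.)

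The fix is to not pass through the Euclidean enlargement at all. Nazarov's inequality as proved in \cite{chernozhukov2017detailed} is a statement about \emph{coordinatewise} shifts of a Gaussian vector: if $Y$ is centered Gaussian in $\IR^d$ with $\E Y_j^2\ge\underline\sigma^2$ for all $j$, then $P(Y\le b+\epsilon\mathbf{1}_d)-P(Y\le b)\le(\epsilon/\underline\sigma)(\sqrt{2\log d}+2)$. Applying this directly to $Y=(Z\cdot v_1,\dots,Z\cdot v_d)^\top$, which is centered Gaussian with $\E Y_j^2=|v_j|^2=1$, gives exactly the claimed bound with $\underline\sigma=1$. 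This is the paper's route (it is precisely why \cref{cor1} and \cref{l1} are described as equivalent forms of Nazarov's inequality), and it sidesteps the geometric issue entirely. The lesson: the inequality is intrinsically about the shifted-constraint set $A_\epsilon$, not the metric enlargement $A^\epsilon$, and the two do not coincide for polytopes with nearly parallel facets.
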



\begin{corollary}[Gaussian anti-concentration inequality]\label{l1}
Let $G$ be a centered Gaussian vector in $\mathbb{R}^d$, $d\geq 3$, such that $\min_{1\leq j\leq d}\E G_j^2\geq\underline{\sigma}^2$ for some $\underline{\sigma}>0$. Then, for any $b\in\mathbb{R}^d$ and $\epsilon>0$,
\[
P(G\leq b+\epsilon \textbf{1}_d)-P(G\leq b)\leq\frac{\epsilon}{\underline{\sigma}}\left(\sqrt{2\log d}+2\right),
\]
where $\{G\leq b\}:=\{G_j\leq b_j: 1\leq j\leq d\}$ and $\textbf{1}_d$ denotes the $\IR^d$-vector with all components equal to 1.
\end{corollary}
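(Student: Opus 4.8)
The plan is to reduce the statement about a general centered Gaussian vector $G$ with $\min_j \E G_j^2 \geq \underline\sigma^2$ to the standard-normal polytope statement of \cref{cor1}. First I would write $G_j = \sigma_j Z\cdot u_j$, where $Z\sim N(0,I_d)$, $\sigma_j^2 = \E G_j^2 \geq \underline\sigma^2$, and $u_j\in\IR^d$ are unit vectors chosen so that $\E[(Z\cdot u_j)(Z\cdot u_k)] = \Sigma_{jk}/(\sigma_j\sigma_k)$ reproduces the correlation structure of $G$; concretely one may take $u_j$ to be the $j$th row of $R^\top$ where $\Sigma = D R^\top R D$ with $D = \diag(\sigma_1,\dots,\sigma_d)$ and $R$ any square root of the correlation matrix, padding with zeros if $\rank\Sigma < d$ (the vectors need not be distinct, but \cref{cor1} as used here only needs them to be unit vectors — if the statement of \cref{cor1} formally requires distinctness, a routine perturbation-and-limit argument handles coincidences, or one applies Nazarov's inequality in the form that does not require it).

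With this representation, the event $\{G \leq b + \eps \mathbf 1_d\}$ becomes $\{Z\cdot u_j \leq (b_j+\eps)/\sigma_j,\ 1\leq j\leq d\}$ and $\{G\leq b\}$ becomes $\{Z\cdot u_j \leq b_j/\sigma_j,\ 1\leq j\leq d\}$. Since $(b_j+\eps)/\sigma_j = b_j/\sigma_j + \eps/\sigma_j \leq b_j/\sigma_j + \eps/\underline\sigma$, enlarging each threshold from $b_j/\sigma_j$ to $b_j/\sigma_j + \eps/\underline\sigma$ only increases the probability, so
\be{
P(G\leq b+\eps\mathbf 1_d)-P(G\leq b) \leq P\bigl(Z\cdot u_j \leq \tsfrac{b_j}{\sigma_j}+\tsfrac{\eps}{\underline\sigma},\ 1\leq j\leq d\bigr) - P\bigl(Z\cdot u_j \leq \tsfrac{b_j}{\sigma_j},\ 1\leq j\leq d\bigr).
}
Now apply \cref{cor1} with the polytope determined by the unit vectors $u_j$, the offsets $b_j/\sigma_j$, and with $\epsilon$ there set equal to $\eps/\underline\sigma$; this yields the bound $(\eps/\underline\sigma)(\sqrt{2\log d}+2)$, which is exactly the claim.

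I do not expect a serious obstacle here — the content is entirely in \cref{cor1} (Nazarov's inequality). The only point requiring a little care is the distinctness hypothesis on the $v_j$'s in \cref{cor1}: if two coordinates of $G$ are perfectly correlated the associated $u_j$ coincide, so one should either invoke the version of Nazarov's inequality without the distinctness requirement, or note that the left-hand side is continuous in $(\sigma_j, u_j, b_j)$ and pass to a limit from configurations with distinct $u_j$. Everything else is the linear change of variables and the monotonicity of $b\mapsto P(G\leq b)$.
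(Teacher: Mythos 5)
Your proposal is correct. The paper does not spell out a proof of this corollary, instead labeling both Lemma \ref{cor1} and Corollary \ref{l1} as ``equivalent forms of Nazarov's inequality'' and citing \cite{chernozhukov2017detailed}; but the reduction you give — writing $G_j=\sigma_j\,Z\cdot u_j$ with $Z\sim N(0,I_d)$ and unit vectors $u_j$, using $\sigma_j\geq\underline\sigma$ to enlarge the shift to $\eps/\underline\sigma$, and then invoking Lemma \ref{cor1} — is precisely the intended argument, and your handling of the distinctness hypothesis on the $v_j$'s (perturbation plus a limiting argument, or appealing to the unrestricted form of Nazarov's inequality) correctly patches the one small gap.
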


For each pair of unit vectors $v_j, v_k\in V_1$ such that $v_k\ne - v_j$, we define $v_{jk}\in \IR^d$ to be the unit vector in the two-dimensional subspace spanned by $\{v_j,v_k\}$ with $v_{jk}\cdot v_j=0$ and $v_{jk}\cdot v_k>0$ as in \cref{thm:kernel}. We define $v_{kj}$ similarly by requiring $v_{kj}\cdot v_k=0$ and $v_{kj}\cdot v_j>0$.
Let $V_2$ be the collection of such $v_{jk}$'s and $v_{kj}$'s. Note that $|V_2|=O(d^2)$, where $|\cdot|$ denotes the cardinality when applied to a set.

For each triple of unit vectors $v_j, v_k, v_l\in V_1$ such that the span of them is three-dimensional, we define $v_{jkl}=v_{kjl}\in \IR^d$ to be the unit vector in the three-dimensional subspace spanned by $\{v_j, v_k, v_l\}$ with $v_{jkl}\cdot v_j=0$, $v_{jkl}\cdot v_k=0$ and $v_{jkl}\cdot v_l>0$. We define $v_{jlk}, v_{ljk}$ and $v_{klj}, v_{lkj}$ analogously. Let $V_3$ be the collection of such $v_{jkl}$'s. Note that $|V_3|=O(d^3)$.

\begin{lemma}\label{lem:AHT}
Let $A$ be a convex polytope with corresponding sets of unit vectors $V_1, V_2, V_3$ as above.
Recall $\phi_d(z)$ is the $d$-dimensional standard normal density. Assume $d\geq 3$.
We have
\ben{\label{eq:AHTdeg1}
\left|\int_A (u\cdot \nabla \phi_d(z)) dz \right| \leq C(\log d)^{1/2} \max_{v\in V_1} |u\cdot v|,\quad   \forall \ u\in \IR^d
}
and for any $d\times d$ matrix $M$,
\ben{\label{eq:AHTdeg2}
\left|\int_A\langle M, \nabla^2 \phi_d (z) \rangle dz \right|\leq C(\log d) \max_{j,k} (|v_j^\top M v_j|+|v_j^\top M v_{jk}|+ |v_{jk}^\top M v_{jk}| ),
}
the max is taken over all $1\leq j\ne k\leq d$ such that $v_k\ne - v_j$.
Moreover, if the minimum angle between $v_j$ and $v_k$ for $j\ne k$ is $\alpha>0$ and the minimum angle between $v_l$ and the two-dimensional subspace spanned by $\{v_j, v_k\}$ for any triple of distinct indices $1\leq j,k,l\leq d$ with $v_k\ne -v_j$ is $\beta>0$,
then for any $d\times d\times d$ tensor $T$:
\ben{\label{eq:AHTdeg3}
\left|\int_A\langle T, \nabla^3 \phi_d (z) \rangle dz \right|\leq \frac{C(\log d)^{3/2}}{\alpha \beta}  \max_{v_1\in V_1, v_2 \in V_1\cup V_2,\atop v_3\in V_1 \cup V_2\cup V_3} |\langle T, v_1\otimes v_2\otimes v_3\rangle| .
}




\end{lemma}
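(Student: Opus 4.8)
The plan is to prove the three inequalities of \cref{lem:AHT} by the divergence theorem, reducing each bound on an integral of a derivative of $\phi_d$ over $A$ to a surface integral over the boundary $\partial A$, and then estimating these surface integrals by a Nazarov-type argument. First consider \eqref{eq:AHTdeg1}: writing $u\cdot\nabla\phi_d(z)=\nabla\cdot(u\,\phi_d(z))$ (since $u$ is constant), the divergence theorem gives $\int_A(u\cdot\nabla\phi_d)\,dz=\int_{\partial A}(u\cdot\nu)\phi_d\,d\mathcal H^{d-1}$, where $\nu$ is the outward unit normal. The boundary $\partial A$ consists of facets $F_j\subset\{x\cdot v_j=b_j\}$ with $\nu=v_j$ on $F_j$, so the surface integral is $\sum_j(u\cdot v_j)\int_{F_j}\phi_d$. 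Bounding $|u\cdot v_j|\le\max_{v\in V_1}|u\cdot v|$ and pulling it out, the claim reduces to $\sum_j\int_{F_j}\phi_d\,d\mathcal H^{d-1}\le C(\log d)^{1/2}$; this is exactly the statement that the total ``outer surface area'' of $A$ weighted by the Gaussian density is $O(\sqrt{\log d})$, which is the geometric heart of Nazarov's inequality and is already available from \cite{chernozhukov2017detailed} (or can be re-derived: it is equivalent, up to constants, to \cref{cor1} by taking $\epsilon\to0$ in a tube $A^\epsilon\setminus A$).

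For \eqref{eq:AHTdeg2}, I would apply the divergence theorem twice. Writing $\langle M,\nabla^2\phi_d(z)\rangle=\sum_{p,q}M_{pq}\partial_p\partial_q\phi_d(z)=\nabla\cdot(M\nabla\phi_d(z))$ gives $\int_A\langle M,\nabla^2\phi_d\rangle\,dz=\sum_j\int_{F_j}(v_j^\top M\nabla\phi_d)\,d\mathcal H^{d-1}$. On each facet $F_j$ decompose the gradient relative to the hyperplane $H_j=\{x\cdot v_j=b_j\}$: the normal part contributes $(v_j^\top M v_j)\,(-\,v_j\cdot z)\phi_d(z)$ since $\nabla\phi_d(z)=-z\phi_d(z)$ and $v_j\cdot z=b_j$ is constant on $F_j$, giving a term bounded by $|v_j^\top M v_j|\cdot|b_j|\int_{F_j}\phi_d$; the tangential part of $M v_j$ within $H_j$ is handled by integrating by parts again \emph{inside} $H_j$, producing a sum of $(d-2)$-dimensional integrals over the subfacets $F_j\cap F_k$ with a coefficient that, after projecting onto the orthonormal-within-$H_j$ directions, is controlled by $|v_j^\top M v_{jk}|$ — this is precisely why $v_{jk}$ (the unit vector in $\mathrm{span}\{v_j,v_k\}$ orthogonal to $v_j$) appears. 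Iterating once more on $F_j\cap F_k$ picks up a $|v_{jk}^\top M v_{jk}|$ term, and the process terminates because after two steps the remaining boundary integrand has the right form. Collecting, everything is bounded by $\max_{j,k}(|v_j^\top Mv_j|+|v_j^\top Mv_{jk}|+|v_{jk}^\top Mv_{jk}|)$ times a sum of Gaussian surface measures of facets and subfacets, each family of which contributes $O(\log d)$ by iterating the Nazarov-type surface-area bound (one $\sqrt{\log d}$ per codimension, and also one must absorb factors like $|b_j|$ using that on the relevant region $|b_j|\lesssim\sqrt{\log d}$ where $\phi_d$ is not negligible).

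For \eqref{eq:AHTdeg3} the same scheme is run a third time: three applications of the divergence theorem reduce $\int_A\langle T,\nabla^3\phi_d\rangle\,dz$ to integrals over facets, codimension-2 faces $F_j\cap F_k$, and codimension-3 faces $F_j\cap F_k\cap F_l$, at each stage splitting off the normal component of the contracted tensor and integrating the tangential component by parts within the current affine subspace. The vectors that arise are: $v_j\in V_1$ at the first level; the orthonormal-within-$H_j$ basis vector in $\mathrm{span}\{v_j,v_k\}$, i.e.\ $v_{jk}\in V_2$, at the second level; and the vector orthogonal to $\mathrm{span}\{v_j,v_k\}$ inside $\mathrm{span}\{v_j,v_k,v_l\}$, i.e.\ $v_{jkl}\in V_3$, at the third — yielding the bound by $\max_{v_1\in V_1,\,v_2\in V_1\cup V_2,\,v_3\in V_1\cup V_2\cup V_3}|\langle T,v_1\otimes v_2\otimes v_3\rangle|$. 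Here the angle conditions enter: when one changes from the ``raw'' coordinates $v_j,v_k,v_l$ to the orthonormal frame $v_j,v_{jk},v_{jkl}$, the change-of-basis matrix has inverse controlled by $1/\sin(\text{angle})$, so a factor $1/\alpha$ appears from the $2$-dimensional step and $1/\beta$ from the $3$-dimensional step, explaining the $\alpha\beta$ in the denominator; meanwhile each of the three codimension levels contributes one $\sqrt{\log d}$ from the corresponding Gaussian surface-area bound, giving $(\log d)^{3/2}$.

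The main obstacle I anticipate is the bookkeeping in the repeated integration by parts on lower-dimensional faces: one must verify that after restricting $\phi_d$ to an affine subspace it is (up to a constant) still a standard Gaussian density in the subspace coordinates, track the constant shifts $v_j\cdot z=b_j$ correctly, and — most delicately — show that at every level the ``boundary of the boundary'' terms that are generated only ever involve the already-defined vectors in $V_1\cup V_2\cup V_3$ and no new directions, and that the accumulated geometric distortion is exactly $1/(\alpha\beta)$ and not worse. A secondary technical point is making the iterated Nazarov surface-area estimate rigorous on unbounded polytopes and absorbing the polynomially-many ($O(d)$, $O(d^2)$, $O(d^3)$) face contributions into logarithmic factors rather than powers of $d$; this requires that the Gaussian-weighted $(d-r)$-area of the union of all codimension-$r$ faces be $O((\log d)^{r/2})$, which should follow by the same tube/anti-concentration argument as \cref{cor1} applied in each coordinate subspace together with a union bound that costs only logarithms because the Gaussian measure decays fast enough.
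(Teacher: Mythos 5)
Your overall strategy---iterated divergence theorem reducing to Gaussian surface integrals over faces of successive codimension, together with Nazarov-type control---is the same as the paper's, and your reading of which vectors appear at each level is correct. However, there is a genuine gap at the heart of the argument, most visible in the second-derivative bound \cref{eq:AHTdeg2}, which (unlike \cref{eq:AHTdeg3}) carries no $\alpha$ in the denominator. You assert that ``the Gaussian-weighted $(d-r)$-area of the union of all codimension-$r$ faces is $O((\log d)^{r/2})$,'' obtained by iterating a Nazarov surface-area bound. This is false for $r\geq 2$: if several pairs $v_j,v_k$ make an angle $\theta$ close to $0$ or to $\pi$, the corresponding edges $F_{jk}$ can pass arbitrarily close to the origin, and $\sum_{j<k}\int_{F_{jk}}\phi_d\,\mathcal H^{d-2}$ can be of order $d^2$, not $\log d$. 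Nothing in an ``iterated Nazarov'' argument rescues this, because Nazarov's inequality is a statement about the solid Gaussian measure of a thin shell around $\partial A$, and the natural iterate applied within each facet $F_j$ would produce a total cost of order $d\sqrt{\log d}$ after summing over $j$. The paper's actual mechanism is different and essential: it associates to each face $F_{jk}$ (resp.\ $F_j$, $F_{jkl}$) an outward cone $S_{jk}$ (resp.\ $S_j$, $S_{jkl}$), proves these cones are pairwise disjoint (so $\sum\gamma_d(S_{jk})\leq 1$), bounds $\int_{F_{jk}}\phi_d$ by a distance-squared factor ($\leq\log d$ after a truncation) times $\gamma_d$ of a \emph{right-angled} cone $N_{F_{jk}}$, and then compares $N_{F_{jk}}$ to $S_{jk}$ at the cost of a factor $C/\theta$ where $\theta$ is the angle between $v_j$ and $v_k$.

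For the second-derivative bound this $1/\theta$ must then be cancelled, since there is no non-degeneracy assumption available; the paper does this by combining the $F_{jk}$ and $F_{kj}$ contributions and using the algebraic identity $v_j^\top M v_{jk}+v_k^\top M v_{kj}=O(\theta)\cdot\max_{j,k}(|v_j^\top Mv_j|+|v_j^\top Mv_{jk}|+|v_{jk}^\top Mv_{jk}|)$ when $\theta$ is small (the paper's \cref{eq:dri6}). Your sketch contains nothing that plays the role of this cancellation, so as written it cannot yield \cref{eq:AHTdeg2} without an extra $1/\alpha$ that the lemma does not have. Relatedly, your account of where $1/(\alpha\beta)$ comes from in \cref{eq:AHTdeg3}---as the norm of an inverse change-of-basis matrix from $(v_j,v_k,v_l)$ to $(v_j,v_{jk},v_{jkl})$---is not what happens: the frame $(v_j,v_{jk},v_{jkl})$ is already orthonormal by construction, so no ill-conditioned basis change enters. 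Instead, the $1/\alpha$ appears because for terms like $\langle T,v_j\otimes v_{jk}\otimes v_{jk}\rangle$ there is no $(j,k)\mapsto(k,j)$ cancellation available, and the $1/\beta$ appears from the geometric comparison of the skewed cone $S_{jkl}$ to the right-angled cone $N_{F_{jkl}}$ via the spherical-geometry argument in \cref{sec:Fjkl2}. To repair your proof you would need to introduce the disjoint outer cones, establish their disjointness (the analogue of \cref{lem:Sjdis,sjkdisjoint,sjkldisjoint}), prove the cone-comparison lemma with its $1/\theta$ (resp.\ $1/(\alpha\beta)$) distortion factor, and---crucially for the second-derivative bound---prove and exploit the small-angle cancellation \cref{eq:dri6}.
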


\begin{remark}
Higher-order derivative bounds may be obtained following the proof of \cref{lem:AHT}. Because they have more complicated expressions and require stronger conditions, we do not consider them in this paper. Moreover, a general result for convex polytopes in $\IR^{d_2}$ with $d_1$ facets easily follows from the case $d_1=d_2=d$. In fact, if $d_2>d_1$, we can restrict the consideration to a $d_1$-dimensional subspace, and if $d_2<d_1$, we can extend the $d_2$-dimensional standard normal to $\IR^{d_1}$ by concatenating it with independent $N(0,1)$'s. The final bound will only depend on $d_1$, the number of facets of the convex polytope.
\end{remark}

\begin{remark}
Similar upper bounds as in \cref{lem:AHT}, but for quantities with the absolute value inside of the dot product and $A$ replaced by hyperrectangles, were known in the literature (cf. \cite{bentkus1990smooth,anderson1998edgeworth}) and proved using the product form of hyperrectangles. We realized that when the absolute value is outside of the integral, we can use the divergence theorem to obtain the new bounds and they are enough to prove high-dimensional CLTs. This observation was the starting point of this work. 
\end{remark}


The aim of the next lemma is to prove a polytope analog of Lemma 10.5 in \cite{lopes2022central} (see also Lemma 6.3 in \cite{chernozhukov2020nearly}). It will be used for the recursion argument below \eq{eq:eta}.
Let $A$ be as above. For any $t\in\mathbb R$, define
\ben{\label{at}
A(t):=\{x\in\mathbb R^d:x\cdot v_j\leq b_j+t\text{ for }j=1,\dots,d\}.
}
\begin{lemma}\label{lem:vanish}
For any $\kappa>0$ and $u, u_1, u_2, u_3\in\mathbb R^d$, we have
\ben{\label{eq:vanish}
\sup_{x\notin A(\kappa)\setminus A(-\kappa)}\abs{\int_A\langle u,\nabla\phi_d(z-x)\rangle dz}\leq d \phi_1(\kappa)\max_{v_1\in V_1}|u\cdot v_1|.
}
\ben{\label{eq:vanish3}
\sup_{x\notin A(\kappa)\setminus A(-\kappa)}\abs{\int_A\langle u_1\otimes u_2\otimes u_3,\nabla^3\phi_d(z-x)\rangle dz}\leq C d^3 e^{-\kappa^2/4}\max_{v_1\in V_1,v_2\in V_1 \cup V_2,\atop   v_3\in V_1\cup V_2\cup V_3} (|u_1\cdot v_1| |u_2\cdot v_2|||u_3\cdot v_3|).
}
\end{lemma}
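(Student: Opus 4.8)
The plan is to reduce both bounds to one-dimensional Gaussian tail estimates by exploiting the structure of the polytope $A$ and the explicit form of the derivatives of $\phi_d$. For \eqref{eq:vanish}, I would first write $\nabla \phi_d(z-x) = -(z-x)\phi_d(z-x)$, so that $\langle u, \nabla\phi_d(z-x)\rangle = -(u\cdot(z-x))\phi_d(z-x)$, and then apply the divergence theorem: if $w$ is any fixed unit vector with $u = (u\cdot w)\,w + (\text{component} \perp w)$, the antiderivative-type identity $\int_A \partial_w \phi_d(z-x)\,dz = -\int_{\partial A} \phi_d(z-x)\, (w\cdot n(z))\, dS(z)$ holds, where $n(z)$ is the outward unit normal on the boundary facets. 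Since $\partial A$ is contained in the union of the $d$ hyperplanes $H_j = \{z : z\cdot v_j = b_j + 0\}$ with normals $v_j$, we get a sum over $j$ of terms $\pm(w\cdot v_j)\int_{\text{facet}_j}\phi_d(z-x)\,dS$. Choosing $w$ appropriately (e.g. aligning with the directions $v_j$ facet by facet, or just bounding $|w\cdot v_j|\le 1$ after decomposing $u$ into the $V_1$ directions), each facet integral is at most the integral of the $1$-dimensional marginal over the line $\mathbb R v_j$ evaluated at the affine coordinate $b_j + 0 - x\cdot v_j$, which is a slice of a standard Gaussian; crucially, when $x\notin A(\kappa)\setminus A(-\kappa)$, for every $j$ the displacement $|b_j - x\cdot v_j|$ from the facet is forced to be $\ge \kappa$ in at least... — more precisely, the condition $x\notin A(\kappa)\setminus A(-\kappa)$ means either $x\in A(-\kappa)$ (so $x\cdot v_j \le b_j-\kappa$ for all $j$, putting $x$ deep inside) or $x\notin A(\kappa)$ (so $x\cdot v_j > b_j + \kappa$ for some $j$). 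In the first case the facet integral over facet $j$ has its Gaussian slice centered at distance $\ge \kappa$ from $0$; in the second case one facet is far. A cleaner route: bound each of the $d$ facet integrals by $\phi_1(\kappa)$ uniformly by observing that the relevant Gaussian density factor, integrated over the facet, is dominated by $\sup_{|s|\ge\kappa}\phi_1(s) = \phi_1(\kappa)$ times a probability $\le 1$ — here the point is that on the boundary hyperplane $H_j$, the coordinate $z\cdot v_j = b_j$, so the factor $\phi_1(b_j - x\cdot v_j)$ appears, and $|b_j - x\cdot v_j|\ge \kappa$ whenever $x\notin A(\kappa)\setminus A(-\kappa)$ restricted appropriately. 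Summing the $d$ facets and decomposing $u$ along $V_1$ yields the factor $d\,\phi_1(\kappa)\max_{v_1\in V_1}|u\cdot v_1|$.

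For \eqref{eq:vanish3}, I would iterate the divergence-theorem reduction three times, following the proof of \eqref{eq:AHTdeg3} in Lemma \ref{lem:AHT} but now keeping track of the shift $x$. Each application of the divergence theorem peels off one derivative and one integration over a facet (first over facets of $A$ with normals in $V_1$, then over "edges" — intersections of two facets, with the relevant in-plane directions in $V_2$, then over codimension-$3$ faces with directions in $V_3$), at the cost of introducing the directions $v_{jk}$ and $v_{jkl}$ and factors like $1/\alpha$, $1/\beta$ coming from the non-orthogonality of the defining hyperplanes. At the end one is left with a three-fold Gaussian slice, and the key gain is that at least one of the three integration steps occurs across a facet/hyperplane whose signed distance to $x$ is $\ge \kappa$ (again by the hypothesis $x\notin A(\kappa)\setminus A(-\kappa)$), producing a factor $\le C e^{-\kappa^2/4}$ (the weaker constant $1/4$ rather than $1/2$ absorbs the derivatives of the Gaussian, i.e. polynomial prefactors, via $\sup_s |p(s)|\phi_1(s)^{1/2}\cdot\phi_1(s)^{1/2} \le C\phi_1(s)^{1/2}$ on $|s|\ge \kappa$, or more simply Hermite-polynomial bounds). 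The combinatorial count of faces gives $|V_1| = d$, $|V_2| = O(d^2)$, $|V_3| = O(d^3)$, hence the $d^3$ prefactor, and decomposing each $u_i$ along the appropriate direction set gives $\prod_i |u_i\cdot v_i|$ with $v_1\in V_1$, $v_2\in V_1\cup V_2$, $v_3\in V_1\cup V_2\cup V_3$.

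The main obstacle will be the bookkeeping in the three-fold divergence theorem argument for \eqref{eq:vanish3}: one must carefully justify that the boundary terms at each stage live on the correct lower-dimensional faces, that the induced "normal" directions are precisely the $v_{jk}$ and $v_{jkl}$ (up to the $\alpha,\beta$ angle factors controlling the change of coordinates on non-orthogonal faces), and — the genuinely new point compared to Lemma \ref{lem:AHT} — that the shift $x$ being outside $A(\kappa)\setminus A(-\kappa)$ forces a Gaussian factor evaluated at argument of absolute value $\ge \kappa$ to appear in at least one of the three nested integrations regardless of which faces are active. I expect this last uniformity claim to require a short case analysis (is $x$ deep inside $A(-\kappa)$, or outside $A(\kappa)$ through some facet $j_0$?), after which in the first case every facet contributes a far-away Gaussian slice and in the second case the facet $j_0$ does; in both cases one of the three integration layers supplies the $e^{-\kappa^2/4}$ decay while the other two are bounded by absolute constants. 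The polynomial prefactors from $\nabla^3\phi_d$ are harmless since $|H_k(s)|\phi_1(s)\le C\phi_1(s)^{1/2}$ for all $s$ and $\phi_1(s)^{1/2}\le C e^{-\kappa^2/4}$ for $|s|\ge\kappa$.
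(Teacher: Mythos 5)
Your overall strategy matches the paper's: apply the divergence theorem to reduce the integral over $A$ to a sum over at most $d$ facets $F_j$ (and, for \eqref{eq:vanish3}, iterate as in \eqref{eq:thirdderi} to get terms over $F_j$, $F_{jk}$, $F_{jkl}$), then extract a Gaussian-tail factor from each face integral via a case analysis on whether $x\in A(-\kappa)$ or $x\notin A(\kappa)$.

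Two points where your sketch and the paper diverge, and where your sketch is at risk of a gap. First, your treatment of the case $x\notin A(\kappa)$ is not quite right as phrased: you say ``one facet is far,'' suggesting you would get the $\phi_1(\kappa)$ factor from the displacement $|b_j-x\cdot v_j|$ of the facet $F_j$ you are integrating over, but when $x\notin A(\kappa)$ you only know $x\cdot v_k>b_k+\kappa$ for \emph{some} $k$, which in general is not the $j$ at hand, and $|b_j-x\cdot v_j|$ can be small. The paper instead builds an orthonormal basis $(U,v_k)$ adapted to the violated direction $v_k$ and uses the pointwise factorization $\phi_d(z-x)=\phi_{d-1}(U^\top(z-x))\phi_1(v_k\cdot(z-x))$ valid on all of $A$: since $z\in A$ gives $z\cdot v_k\le b_k$ while $x\cdot v_k>b_k+\kappa$, one has $v_k\cdot(z-x)\le-\kappa$ and hence $\phi_1(v_k\cdot(z-x))\le\phi_1(\kappa)$ for \emph{every} $z\in A$, regardless of which facet $F_j\subset A$ you integrate over. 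Only in the case $x\in A(-\kappa)$ does your ``facet displacement'' mechanism apply verbatim (there $b_j-x\cdot v_j\ge\kappa$ for every $j$). Second, you anticipate $1/\alpha$ and $1/\beta$ factors in \eqref{eq:vanish3} ``coming from the non-orthogonality of the defining hyperplanes,'' but the statement has no such factors, and none are needed: in Lemma \ref{lem:AHT} those factors arise solely from the step where one replaces the orthogonal cone $N_{F_{jk}}$ by the non-orthogonal cone $S_{jk}$ in order to invoke disjointness of the $S_{jk}$ when \emph{summing} over faces; here the bound simply multiplies the worst single-face estimate by the crude count $|V_1\cup V_2\cup V_3|=O(d^3)$, so the disjointness device (and with it $\alpha,\beta$) never enters. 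If you carry your sketch out literally, you would obtain a bound with spurious $1/(\alpha\beta)$ factors and would not recover the stated \eqref{eq:vanish3}.

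Your handling of the polynomial prefactors from $\nabla^3\phi_d$ via $|H_k(s)|\phi_1(s)\le C\phi_1(s)^{1/2}$, yielding the weakened decay $e^{-\kappa^2/4}$ rather than $\phi_1(\kappa)$ in \eqref{eq:vanish3}, is consistent with the paper's intent (the paper omits the details of \eqref{eq:vanish3} and says it follows similarly from \eqref{eq:thirdderi}).
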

\cref{lem:vanish} also generalizes to higher-order derivatives in an obvious way. We do not pursue the most general form because the third-derivative bound \eq{eq:vanish3} suffices for our purpose.


\section{Proof of Theorems \ref{thm:kernel}--\ref{thm:bootstrap} and Proposition \ref{prop:gauss}}\label{sec:proof1}

In this section, we first prove \cref{thm:kernel}, which is simpler than that of \cref{thm:fklz} and shows the basic ideas. We then use \cref{thm:kernel} to prove \cref{prop:gauss} and \cref{thm:bootstrap}.

\subsection{Proof of Theorem \ref{thm:kernel}}

Without loss of generality, in proving \eq{eq:kernelbound}, we assume $W$ and $Z$ are independent.
For fixed unit vectors $v_1,\dots, v_d\in \IR^d$ as in the condition of \cref{thm:kernel},
let $\mcl K_d(v):=\mcl K_d(v_1,\dots, v_d)$ be the collection 
of convex polytopes of form
\be{
\{x\in \IR^d:\  x\cdot v_j\leq b_j\ \text{for}\ j=1,\dots, d\},
}
where $b_j$, $1\leq j\leq d$, are real numbers.

Modifying the argument leading to Eq.(2.6) of \cite{fang2021high} (simply changing the $\epsilon$-rectangle $R(0;\epsilon)$ in their Lemma 2.4 to the $\epsilon$-polytope $\{x\in \IR^d: x\cdot v_j\leq \epsilon\ \text{for}\ j=1,\dots,  d\}$), we obtain 
\ben{\label{3}
|P(W\in A)-P(Z\in A)|\leq  C\left(\sup_{h=1_A: A\in \mcl{K}_d(v)} |\E T_t \tilde h(W)| + e^t \sqrt{t} \log d   \right),\quad \forall\ t>0,
}
where 
\be{
T_t \tilde h(x)=\E h(e^{-t}x+\sqrt{1-e^{-2t}}Z)-\E h(Z), \quad Z\sim N(0,
I_d).
}




Now we fix $A\in\mcl{K}_d(v)$, write $h:=1_A$ and proceed to bound $|\E T_t\tilde h(W)|$. 
Similar to (1.14) and (3.1) of \cite{BhHo10}, one can verify that 
\ben{\label{psi}
\psi_t(w)=-\int_t^\infty T_s\tilde h(w)ds
}
is a solution to the Stein equation
\ben{\label{eq:stein}
\langle I_d,\nabla^2\psi_t(w)\rangle-w \cdot \nabla \psi_t(w)=T_t \tilde h(w).
}
Thus we have
\begin{equation}\label{n03}
\E T_t\tilde h(W)=\E[\langle I_d, \nabla^2\psi_t(W)\rangle-W\cdot \nabla \psi_t(W)].
\end{equation}
We can rewrite $T_s\tilde h(w)$ as
\ben{\label{Tsh}
T_s\tilde h(w)= h_s(e^{-s}w)-\E h(Z),
}
where $h_s:=\mcl{N}_{\sqrt{1-e^{-2s}}} h$ and $\mcl N_\sigma h(x):=\E h(x+\sigma Z)$.
In particular, $\psi_t$ is infinitely differentiable and
\ben{\label{psi-deriv}
\partial_{j_1,\dots, j_r}\psi_t(w)=-\int_t^\infty e^{-rs}[\partial_{j_1,\dots, j_r} h_s(e^{-s}w)]ds.
}
To bound $|\E T_t\tilde h(W)|$, we use the definition of Stein kernel in \eq{eq:steinkernel} and integration by parts to obtain
\bes{
\E T_t\tilde h(W)=&\E[\langle I_d, \nabla^2\psi_t(W)\rangle-W\cdot \nabla \psi_t(W)]\\
=&\E\langle I_d-\tau^W(W), \nabla^2 \psi_t(W)\rangle\\
=&\sum_{j,k=1}^d \int_t^\infty e^{-2s} \E(\tau_{jk}^W(W)-\delta_{jk}) \int_{\IR^d} \partial_{jk} h(e^{-s}W+\sqrt{1-e^{-2s}}z)\phi_d(z) dz ds\\
=& \int_t^\infty (\frac{e^{-s}}{\sqrt{1-e^{-2s}}})^2 \E\left[ \int_{\frac{A-e^{-s}W}{\sqrt{1-e^{-2s}}}}\langle \tau^W(W)-I_d, \nabla^2 \phi_d(z)\rangle dz\right]ds,
}
where $\delta_{jk}$ denotes the Kronecker delta, and $(A-a)/b$ denotes the set of all the elements in $A$ after the linear transformation.
Using \eq{eq:AHTdeg2}, we obtain
\ben{\label{eq:kernelproof1}
|\E T_t\tilde h(W)|\leq C \Delta (\log d) (|\log t|\vee 1).
}
In proving \eq{eq:kernelbound}, we can assume without loss of generality that $\Delta\leq 1$; otherwise, \eq{eq:kernelbound} holds with $C=1$.
Choosing $t=\Delta^2$ and using \eq{3} finish the proof.

\subsection{Proof of Proposition \ref{prop:gauss}}


Recall that the diagonal entries of $\Sigma$ are all equal to 1.
Without loss of generality, we assume $\Sigma$ is full rank. If not, the result follows by perturbing the off-diagonal entries of $\Sigma$ by arbitrarily small amounts (for example, by considering $(G+\eps Z')/\sqrt{1+\eps^2}$ where $Z'\sim N(0, I_d)$ and independent of $G$) and using an approximation argument. 

Write 
\be{
\Sigma=V^{-1} (V^{-1})^\top, \quad (V^{-1})^\top =(v_1,\dots, v_d), \quad |v_j|=1\ \text{for all}\ 1\leq j\leq d.
}
Here $v_1,\dots, v_d\in \IR^d$ will be the unit vectors appearing in \cref{thm:kernel}. Let $Z\sim N(0,I_d)$ and let $\tilde Z_1:=V Z_1$. Then the problem transforms to bounding 
\be{
|P(\tilde Z_1\in A)-P(Z\in A)|,
}
where
\be{
A=\{x\in \IR^d:\  a_j\leq x\cdot v_j\leq b_j\ \text{for}\ j=1,\dots, d\}.
}
We first assume $a_j=-\infty$ for all $j$ so $A$ becomes \eq{eq:polytope}. From Gaussian integration by parts, $\tilde Z_1$ has Stein kernel $V\Sigma^{(1)} V^\top$ and $Z$ has Stein kernel $I_d$. Next, we check the value of $\Delta$ appearing in \cref{thm:kernel} in this case. 
Denote $\Delta_{jk}:=\Sigma^{(1)}_{jk}-\Sigma_{jk}$.
We have
\be{
v_j^\top (V \Sigma^{(1)} V^\top-I_d) v_j=\Sigma_{jj}^{(1)}-1
=\Delta_{jj},
}
and for $k\ne j$ (note that $v_k\ne - v_j$ by the full rank assumption at the beginning of this proof),
\be{
v_{jk}=\frac{v_k-(v_j\cdot v_k)v_j}{|v_k-(v_j\cdot v_k)v_j|},\quad v_j\cdot v_k=\Sigma_{jk},\quad |v_k-(v_j\cdot v_k)v_j|^2=1-\Sigma^2_{jk},
}
\be{
v_j^\top (V \Sigma^{(1)} V^\top-I_d) v_{jk}=\frac{\Delta_{jk}-\Delta_{jj} \Sigma_{jk}}{\sqrt{1-\Sigma_{jk}^2}},
}
\ben{\label{eq:eps}
v_{jk}^\top (V \Sigma^{(1)} V^\top-I_d) v_{jk}=\frac{1}{1-\Sigma^2_{jk}} [\Delta_{kk}+\Sigma^2_{jk} \Delta_{jj}-2\Sigma_{jk}\Delta_{jk}].
}
Then, \cref{prop:gauss} follows from \cref{thm:kernel}.

If $a_j$'s are not $-\infty$, the result follows by applying \cref{thm:kernel}\footnote{Strictly speaking, we apply \cref{thm:kernel} in $\IR^{2d}$, concatenate $\tilde Z_1$ and $Z$ with an independent $\IR^d$-vector $Z'\sim N(0,I_d)$ and concatenate $\pm v_j$'s with a 0 vector in $\IR^d$.} to 
\be{
A=\{x\in \IR^d:\  x\cdot v_j\leq b_j,\ x\cdot (-v_j)\leq -a_j\ \text{for}\ j=1,\dots, d\}.
}
Recall the max in \cref{thm:kernel} is taken over all $v_k\ne -v_j$.

\subsection{Proof of Theorem \ref{thm:bootstrap}}\label{sec:pr-boot}

Applying \cref{prop:gauss} conditional on $X$, we have
\ben{\label{apply:gcomp}
\rho^\xi\leq\frac{C}{\alpha^2}\Delta_n^* (\log d) (|\log \Delta_n^*|\vee 1),
}
where $\Delta_n^*:=\|\Sigma_n-\Sigma\|_\infty$ with $\Sigma_n:=n^{-1}\sum_{i=1}^n(X_i-\bar X)(X_i-\bar X)^\top$. 
Consider the following decomposition:
\[
\Sigma_n-\Sigma=\frac{1}{n}\sum_{i=1}^n(X_iX_i^\top-\E[X_iX_i^\top])-\bar X\bar X^\top.
\]
Using Lemma 2.1 of \cite{FaKo22} with $\alpha=1$ and $\alpha=1/2$ (in their notation) respectively, we have, for any $p\geq2$,
\[
\max_j\|\bar X_j\|_p\leq C\frac{B}{n}(\sqrt{pn}+p)
\]
and
\[
\max_{j,k}\left\|\frac{1}{n}\sum_{i=1}^n(X_{ij}X_{ik}-\E[X_{ij}X_{ik}])\right\|_p
\leq C\frac{B^2}{n}(\sqrt{pn}+p^2),
\]
where $\|\cdot\|_p$ denotes the $L^p$-norm with respect to the underlying probability measure. 
Hence $\|\bar X_j \bar X_k\|_p\leq CB^2(pn+p^2)/n^2\leq CB^2p^2/n$ and
\[
\max_{j,k}\|\Sigma_{n,jk}-\Sigma_{jk}\|_p\leq C_1\frac{B^2}{n}(\sqrt{pn}+p^2),
\]
where $C_1>0$ is a universal constant. 
Thus, choosing $p=2\log(d/\gamma)$ and using the union bound and Markov's inequality, we obtain
\ba{
P\left(\Delta_n^*>eC_1\frac{B^2}{n}(\sqrt{pn}+p^2)\right)
&\leq d^2\left(eC_1\frac{B^2}{n}(\sqrt{pn}+p^2)\right)^{-p}\max_{j,k}\|\Sigma_{n,jk}-\Sigma_{jk}\|_p^p\\
&\leq d^2e^{-p}=\gamma^2\leq\gamma.
}
Consequently, there exists a universal constant $C_2\geq1$ such that with probability at least $1-\gamma$,
\[
\Delta_n^*\leq C_2\left(B^2\sqrt{\frac{\log(d/\gamma)}{n}}+B^2\frac{(\log(d/\gamma))^2}{n}\right)=:\delta_n.
\]
Now suppose that $\delta_n\log(d/\gamma)\leq1/2$. Then, from $2\geq \E \exp(|X_{ij}|/B)\geq 1+\E[X_{ij}^2]/(2B^2)$, we have 
\ben{\label{eq:star3}
2B^2\geq\max_{i,j}\E[X_{ij}^2]\geq\max_j\Sigma_{jj}=1
}
and
\[
\frac{(\log(d/\gamma))^2}{n}\leq 2B^2\sqrt{\frac{(\log(d/\gamma))^3}{n}}\sqrt{\frac{\log(d/\gamma)}{n}}
\leq\sqrt{\frac{\log(d/\gamma)}{n}}. 
\]
Hence, with probability at least $1-\gamma$,
\[
\Delta_n^*\leq 2C_2B^2\sqrt{\frac{\log(d/\gamma)}{n}}.
\]
Therefore, noting that the function $x\mapsto x(|\log x|\vee1)$ is increasing for $x\geq 0$, we have with probability at least $1-\gamma$
\ba{
\rho^\xi\leq \frac{CB^2(\log(d/\gamma))^{3/2}}{\alpha^2\sqrt n}\left(\left|\log\left(2C_2B^2\sqrt{\frac{\log(d/\gamma)}{n}}\right)\right|\vee1\right).
}
Since $2C_2B^2\sqrt{\log(d/\gamma)/n}\leq2\delta_n\leq1$, we have (recall $2C_2B^2\sqrt{\log(d/\gamma)}\geq 1$)
\ba{
\left|\log\left(2C_2B^2\sqrt{\frac{\log(d/\gamma)}{n}}\right)\right|
=-\log\left(2C_2B^2\sqrt{\frac{\log(d/\gamma)}{n}}\right)\leq\frac{1}{2}\log n.
}
Consequently, with probability at least $1-\gamma$,
\[
\rho^\xi\leq \frac{CB^2(\log(d/\gamma))^{3/2}}{\alpha^2\sqrt n}\log n.
\]
It remains to prove the claim when $\delta_n\log(d/\gamma)>1/2$. In this case, there exists a universal constant $c>0$ such that $\frac{B^2(\log(d/\gamma))^{3/2}}{\sqrt n}\geq c$; hence the desired result holds for $C\geq1/c$ (recall $1\geq \alpha>0$). 

\section{Proof of Theorem \ref{thm:fklz}} \label{sec:proof2}

We divide the proof into three steps. In Step 1, we transform the problem of general Gaussian approximation in Kolmogorov distance into a standard Gaussian approximation on convex polytopes. In Step 2, we prove a result under a boundedness condition. In Step 3, we perform a truncation and obtain the result for the unbounded case.

\subsection{Step 1: Standard Gaussian approximation on convex polytopes}\label{sec:4.1}

In the first step, we transform the problem from a general Gaussian approximation in Kolmogorov distance to a standard Gaussian approximation on convex polytopes as follows. 
Recall that the diagonal entries of $\Sigma$ are all equal to 1.
Without loss of generality, we assume $\Sigma$ is full rank. If not, the result follows by perturbing the off-diagonal entries of $\Sigma$ by an arbitrarily small amounts and using an approximation argument. 
Write 
\be{
\Sigma=V^{-1} (V^{-1})^\top, \quad (V^{-1})^\top=(v_1,\dots, v_d), \quad |v_j|=1\ \text{for all}\ 1\leq j\leq d.
}
Without loss of generality, we assume $1\geq \alpha, \beta>0$. From \eq{eq:three} and the equality of absolute determinant and volume of paralellepiped, we have that the minimum angle between $v_j$ and $v_k$ for $1\leq j\ne k\leq d$ is $\geq \alpha>0$ and the minimum angle between $v_l$ and the two-dimensional subspace spanned by $\{v_j, v_k\}$ for any triple of distinct indices $1\leq j,k,l\leq d$ is $\geq \beta>0$. We also assume without loss of generality that $\alpha\geq \beta>0$.
Let
\be{
\tilde X_i=VX_i,\ \tilde W=\frac{1}{\sqrt{n}} \sum_{i=1}^n \tilde X_i.
}
We then have 
\be{
\Cov(\tilde W)=V \Cov(W) V^\top,\quad \tilde X_i\cdot v_j=X_{ij}, 
}
\ben{\label{eq:subGauss}
\E \exp(|\tilde X_i\cdot v_j|/B)\leq 2
}
for all $1\leq i\leq n$, $1\leq j\leq d$. 
For fixed unit vectors $v_1,\dots, v_d\in \IR^d$ as above,
let $\mcl K_d(v):=\mcl K_d(v_1,\dots, v_d)$ be the collection 
of convex polytopes of form
\be{
\{x\in \IR^d:\  x\cdot v_j\leq b_j\ \text{for}\ j=1,\dots, d\},
}
where $b_j$, $1\leq j\leq d$, are real numbers.
Define
\be{
\rho:=\sup_{A\in \mcl{K}_d(v)} |\P(\tilde W\in A)- \P(Z\in A)|,\qquad Z\sim N(0,I_d).
}
If $a_j=-\infty$ for all $j$,
then, to prove \eq{eq:fklz}, it suffices to prove
\ben{\label{eq:fklz-1}
\rho\leq \frac{C}{\alpha^2} \norm{\Cov(W)-\Sigma}_{\infty} (\log d)\log n+ \frac{CB^3}{\alpha^2\beta^2\sqrt{n}} (\log d)^2 \sqrt{\log(dn)}(\log n)^4,
}
where $C$ is an absolute constant.
If $a_j$'s are not $-\infty$, the result follows by a similar proof\footnote{Refer to the footnote at the end of the proof of \cref{prop:gauss}.} considering convex polytopes of form
\be{
\{x\in \IR^d:\  x\cdot v_j\leq b_j,\ x\cdot (-v_j)\leq -a_j \ \text{for}\ j=1,\dots, d\}.
}

Without loss of generality, in proving \eq{eq:fklz-1}, we may assume $X_1,\dots, X_n$ and $Z\sim N(0, I_d)$ are independent.

\subsection{Step 2: Bounded case}


In the second step, we assume the boundedness condition that
\be{
\frac{|\tilde X_i\cdot v_j|}{\sqrt{n}}\leq  \delta
}
for all $1\leq i\leq n$, $1\leq j\leq d$ and
prove
\ben{\label{eq:fklz-bdd}
\rho\leq C\left\{\frac{\Delta_0}{\alpha^2}(|\log\Delta_2|\vee1)
+\frac{\Delta_1}{\alpha^2\beta^2} \left[ (\log d)(|\log\Delta_2|\vee1)+\sqrt{\log d}(|\log\Delta_2|\vee1)^{3/2}  \right]+\delta (\log d)^{3/2} \right\},
}
where 
\ba{
\Delta_0&:=(\log d)\|\Sigma-\Cov(W)\|_\infty,&
\Delta_1&:=(\log d)^{3/2} n\delta^3,&
\Delta_2&:=(\frac{\Delta_1}{\alpha^2\beta^2})^2+\delta^2\log d.
}



Recall that we have assumed without loss of generality that $1\geq \alpha\geq\beta>0$. 
From the assumption that $\Sigma_{jj}=1$ and assuming $\norm{\Sigma-\Cov(W)}_\infty\leq 1/2$ (otherwise, \eq{eq:fklz-bdd} trivially holds since $\rho\leq 1$), we have $n\delta^2\geq 1/2$.
Since $\rho\leq 1$, we may assume, without loss of generality,
\ben{\label{wlog}
\delta\sqrt{\log d}\leq \frac{1}{\sqrt{2}}.
}


Let 
\be{
\xi_i=\frac{X_i}{\sqrt{n}},\quad \tilde \xi_i=V\xi_i=\frac{\tilde X_i}{\sqrt{n}}.
}
By the boundedness assumption, we have
\be{
|\xi_{ij}|= |\tilde \xi_i\cdot v_j|\leq \delta.
}

Recall \eq{3}.
By taking the value of $t$ appropriately, we will deduce a recursive inequality for $\rho$ (see \eqref{somewhere}). 
We focus on the situation where $t$ is of the form
\ben{\label{t}
t=t_0+\delta^2\log d,
}
where $t_0$ is a non-negative number whose value will be specified later on (see \eqref{t0}).
We fix $A\in\mcl{K}_d(v)$ (will take $\sup$ in \eqref{somewhere}), write $h:=1_A$ and proceed to bound $|\E T_t\tilde h(\tilde W)|$. 

To bound $|\E T_t\tilde h(\tilde W)|$ in \eq{n03}, we employ the leave-one-out approach in Stein's method for multivariate normal approximation. 
Let $\{\tilde \xi_1',\dots, \tilde \xi_n'\}$ be an independent copy of $\tilde \xi=\{\tilde \xi_1,\dots, \tilde \xi_n\}$ and let $\tilde W^{(i)}=\tilde W-\tilde \xi_i$.
From the independence and mean-zero assumption and using Taylor's expansion, we obtain
\besn{\label{eq:leave}
&\E[\tilde W \cdot \nabla \psi_t(\tilde W)]=\sum_{i=1}^n \E \left[\tilde \xi_i \cdot (\nabla \psi_t(\tilde W) -\nabla \psi_t(\tilde W^{(i)}))  \right]\\
=& \sum_{i=1}^n \E \langle \tilde \xi_i^{\otimes 2}, \nabla^2 \psi_t(\tilde W^{(i)})  \rangle 
+\sum_{i=1}^n \E\langle U\tilde \xi_i^{\otimes 3}, \nabla^3 \psi_t(\tilde W^{(i)}+UU' \tilde \xi_i)\rangle,
}
where $U, U'$ are independent uniform random variables on $[0,1]$ and independent of everything else.
From \eq{n03}, \eq{eq:leave} and the independence between $\tilde \xi_i$ and $\tilde W^{(i)}$, we have
\bes{
\E T_t\tilde h(\tilde W)=& \E\langle I_d, \nabla^2 \psi_t(\tilde W) \rangle - \sum_{i=1}^n \E \langle \tilde \xi_i'^{\otimes 2}, \nabla^2 \psi_t(\tilde W)\rangle\\
&+\sum_{i=1}^n \E \langle \tilde \xi_i'^{\otimes 2}, \nabla^2 \psi_t(\tilde W)\rangle- \sum_{i=1}^n \E \langle \tilde \xi_i'^{\otimes 2}, \nabla^2 \psi_t(\tilde W^{(i)})\rangle\\
&-\sum_{i=1}^n \E\langle U\tilde \xi_i^{\otimes 3}, \nabla^3 \psi_t(\tilde W^{(i)}+UU' \tilde \xi_i)\rangle=R_0+R_1-R_2,
}
where
\be{
R_0=\E \left[ \langle I_d-\sum_{i=1}^n \E \tilde \xi_i^{\otimes 2} , \nabla^2 \psi_t(\tilde W) \rangle   \right],
}
\be{
R_1=\sum_{i=1}^n \E\langle \tilde \xi_i'^{\otimes 2}\otimes \tilde \xi_i, \nabla^3 \psi_t(\tilde W^{(i)}+U \tilde \xi_i)\rangle,
}
\be{
R_2=\sum_{i=1}^n \E\langle U\tilde \xi_i^{\otimes 3}, \nabla^3 \psi_t(\tilde W^{(i)}+UU' \tilde \xi_i)\rangle.
}
We first bound $R_0$. 
Following similar arguments leading to \eq{eq:kernelproof1} and \eq{eq:gauss} (note that $I_d-\Cov(\tilde W)=V(\Sigma-\Cov(W))V^\top$), we obtain
\ban{\label{r0}
|R_0|\leq \frac{C\Delta_0 (|\log t|\vee1)}{\alpha^2}.
}

Next we bound $R_2$.  
Using \eqref{psi-deriv}, we rewrite $R_2$ as
\ba{
R_2
&=-\sum_{i=1}^n\sum_{j,k,l=1}^d\int_t^\infty e^{-3s}\E[\tilde \xi_{ij} \tilde \xi_{ik} \tilde \xi_{il} U \partial_{jkl}h_s(\tilde W^{(i)}(s)+e^{-s}UU'\tilde \xi_i)]ds,
}
where we set $F(s):=e^{-s}F$ for any random vector $F$ in $\mathbb{R}^d$. 

Note that, for any $s\geq t$, we have 
\[
\frac{e^{-s}}{\sqrt{1-e^{-2s}}}\delta
\leq\frac{\delta}{\sqrt{2t}}
\leq\frac{1}{\sqrt{2\log d}},
\]
where the first inequality follows from 
\ben{\label{bh-3.14}
\frac{e^{-s}}{\sqrt{1-e^{-2s}}}\leq\frac{1}{\sqrt{2s}}\qquad\text{for all }s>0,
}
while the second one follows from \eqref{t}.

Let
\[
\kappa:=\sqrt{12\log d-2\log (1-e^{-t})}+\frac{1}{\sqrt{2\log d}},
\]
so that
\ben{\label{kappa}
e^{-(\kappa-e^{-s}\delta/\sqrt{1-e^{-2s}})^2/4}\leq \frac{\sqrt{1-e^{-t}}}{d^3}\leq\frac{\sqrt{t}}{d^3}.
}
Because of $|\tilde \xi_i\cdot v_j|\leq \delta$, $\forall 1\leq j\leq d$, and the minimum angle assumption,
we have 
\ben{\label{eq:eta}
|\tilde \xi_i\cdot v_{jk}|\leq \frac{C\delta}{\alpha},\quad |\tilde \xi_i\cdot v_{jkl}|\leq \frac{C\delta}{\beta}
}
for the relevant unit vectors $v$ appearing in \eq{eq:AHTdeg3} and \eq{eq:vanish3}.
By integration by parts and recalling $h_s(x)=\E 1\{x+\sqrt{1-e^{-2s}}Z\in A\}$ (from \eq{Tsh}) and \eq{at}, we have
\bes{
&\sum_{j,k,l=1}^d 
\tilde \xi_{ij} \tilde \xi_{ik} \tilde \xi_{il}  \partial_{jkl}h_s(\tilde W(s)-e^{-s}\tilde \xi_i+e^{-s}UU'\tilde  \xi_i)\\
=&-\left(\frac{1}{\sqrt{1-e^{-2s}}}\right)^3 \int_{ \IR^d} 1\left\{\frac{\tilde W(s)}{\sqrt{1-e^{-2s}}} -\frac{e^{-s}(1-UU')\tilde  \xi_i}{\sqrt{1-e^{-2s}}} + z\in \frac{A}{\sqrt{1-e^{-2s}}} \right\} \langle  \tilde  \xi_i^{\otimes 3},\nabla^3 \phi_d(z)\rangle dz\\
= & -\left(\frac{1}{\sqrt{1-e^{-2s}}}\right)^3 \int_{\IR^d} 1\left\{\frac{\tilde W(s)}{\sqrt{1-e^{-2s}}} -\frac{e^{-s}(1-UU')\tilde  \xi_i}{\sqrt{1-e^{-2s}}} + z\in \frac{A}{\sqrt{1-e^{-2s}}} \right\} \langle  \tilde  \xi_i^{\otimes 3},\nabla^3 \phi_d(z)\rangle dz
\\&\qquad \qquad \qquad \qquad \qquad \qquad \times  1\left\{\frac{\tilde W(s)}{\sqrt{1-e^{-2s}}}\in \frac{A}{\sqrt{1-e^{-2s}}}(\kappa)\setminus \frac{A}{\sqrt{1-e^{-2s}}}(-\kappa)  \right\}\\
 & -\left(\frac{1}{\sqrt{1-e^{-2s}}}\right)^3 \int_{\IR^d} 1\left\{\frac{\tilde W(s)}{\sqrt{1-e^{-2s}}} -\frac{e^{-s}(1-UU')\tilde  \xi_i}{\sqrt{1-e^{-2s}}} + z\in \frac{A}{\sqrt{1-e^{-2s}}} \right\} \langle  \tilde  \xi_i^{\otimes 3},\nabla^3 \phi_d(z)\rangle dz
\\&\qquad \qquad \qquad \qquad \qquad \qquad\times  1\left\{\frac{\tilde W(s)}{\sqrt{1-e^{-2s}}}\notin \frac{A}{\sqrt{1-e^{-2s}}}(\kappa)\setminus \frac{A}{\sqrt{1-e^{-2s}}}(-\kappa)  \right\}
\\:= & H_1+H_2.
}
For $H_1$, with $\kappa_s:=\sqrt{1-e^{-2s}}\kappa$, by \cref{eq:AHTdeg3,eq:eta} and recalling $1\geq \alpha\geq \beta>0$, we have
\bes{
|H_1|\leq \frac{C \delta^3}{\alpha\beta} \left(\frac{1}{\sqrt{1-e^{-2s}}}\right)^3\frac{(\log d)^{3/2}}{\alpha\beta} 1_{\{\tilde W(s)\in A(\kappa_s)\setminus A(-\kappa_s)\}}.
}
For $H_2$, by \cref{eq:vanish3,kappa,eq:eta} (recall $1\geq \alpha\geq \beta>0$), we have
\bes{
|H_2|\leq &\left(\frac{1}{\sqrt{1-e^{-2s}}}\right)^3 \left|\int_{\IR^d} 1\left(\frac{\tilde W(s)}{\sqrt{1-e^{-2s}}} -\frac{e^{-s}(1-UU')\tilde  \xi_i}{\sqrt{1-e^{-2s}}} + z \in \frac{A}{\sqrt{1-e^{-2s}}} \right) \langle  \tilde  \xi_i^{\otimes 3},\nabla^3 \phi_d(z)\rangle dz\right|
\\&\times  1\bigg(\frac{\tilde W(s)}{\sqrt{1-e^{-2s}}}-\frac{e^{-s}}{\sqrt{1-e^{-2s}}}(1-UU')\tilde  \xi_i\\&\quad\quad\quad\quad\quad\notin \frac{A}{\sqrt{1-e^{-2s}}}(\kappa-\frac{e^{-s}\delta}{\sqrt{1-e^{-2s}}})\setminus \frac{A}{\sqrt{1-e^{-2s}}}(-\kappa+\frac{e^{-s}\delta}{\sqrt{1-e^{-2s}}})  \bigg)\\
\leq &\sup_{x\notin \frac{A}{\sqrt{1-e^{-2s}}}(\kappa-\frac{e^{-s}\delta}{\sqrt{1-e^{-2s}}})\setminus \frac{A}{\sqrt{1-e^{-2s}}}(-\kappa+\frac{e^{-s}\delta}{\sqrt{1-e^{-2s}}})} \left(\frac{1}{\sqrt{1-e^{-2s}}}\right)^3 \left|\int_{\frac{A}{\sqrt{1-e^{-2s}}}}  \langle  \tilde  \xi_i^{\otimes 3},\nabla^3 \phi_d(z-x)\rangle dz\right|\\
\leq & \frac{C \delta^3}{\alpha\beta} \left(\frac{1}{\sqrt{1-e^{-2s}}}\right)^3\sqrt{t}.
}
Thus, 
\bes{
|H_1+H_2|\leq \frac{C \delta^3}{\alpha\beta} \left(\frac{1}{\sqrt{1-e^{-2s}}}\right)^3\left(\frac{(\log d)^{3/2}}{\alpha\beta} 1_{\{\tilde W(s)\in A(\kappa_s)\setminus A(-\kappa_s)\}}+\sqrt{t}\right).
}
Therefore, we obtain
\ben{\label{r2-1}
|R_2|\leq \frac{Cn\delta^3}{\alpha\beta} \int_t^\infty\left(\frac{e^{-s}}{\sqrt{1-e^{-2s}}}\right)^3\left(\frac{(\log d)^{3/2}}{\alpha\beta}\P(\tilde W(s)\in {A}(\kappa_s)\setminus {A}(-\kappa_s))+\sqrt{t}\right)ds.
}
Now, setting $A_s:=e^s A$, we have 
\[
P(\tilde W(s)\in A(\kappa_s)\setminus {A}(-\kappa_s))=P(\tilde W\in A_s(e^s\kappa_s)\setminus A_s(-e^s\kappa_s)).
\]
Because $A_s\in \mcl K_d(v)$, we obtain
\be{
P(\tilde W(s)\in A(\kappa_s)\setminus A(-\kappa_s))\leq \sup_{A\in \mcl K_d(v)} P(Z\in A(e^s \kappa_s)\setminus A(-e^s\kappa_s))+2\rho.
}
Note that Nazarov's inequality in \cref{cor1} gives
\[
P(Z\in A(\epsilon)\setminus A(-\epsilon))\leq C\epsilon\sqrt{\log d}
\]
for any $\epsilon>0$.
Hence, 
\be{
P(\tilde W(s)\in A(\kappa_s)\setminus A(-\kappa_s))
\leq Ce^s\kappa_s \sqrt{\log d}+2\rho.
}
Inserting this bound into \eqref{r2-1} and using \eqref{bh-3.14}, we deduce
\ban{
|R_2|&\leq \frac{Cn\delta^3}{\alpha\beta} \int_t^\infty\left(\frac{e^{-s}}{\sqrt{1-e^{-2s}}}\right)^3\left(\frac{(\log d)^{3/2}}{\alpha\beta}\left(e^s\kappa_s \sqrt{\log d}+\rho\right)+\sqrt{t}\right)ds\nonumber\\
&\leq \frac{Cn\delta^3}{\alpha\beta}\left(\frac{(\log d)^{3/2}}{\alpha\beta}\left((|\log t| \vee 1)\kappa\sqrt{\log d}+\frac{\rho}{\sqrt{t}}\right)+1\right)\nonumber\\
&\leq \frac{C\Delta_1}{\alpha^2\beta^2 }\left((|\log t| \vee 1)\kappa\sqrt{\log d}+\frac{\rho}{\sqrt{t}}\right),\label{r2}
}
where we used the inequality $\kappa\geq1$ to deduce the last line. The same bound holds for $|R_1|$.

From \eqref{3}, \eqref{r0} and \eqref{r2}, we obtain
\ben{\label{somewhere}
\rho
\leq c_0\left[
(|\log t|\vee1)\frac{\Delta_0}{\alpha^2}
+\frac{\Delta_1}{\alpha^2\beta^2 }\left((|\log t| \vee 1)\kappa\sqrt{\log d}+\frac{\rho}{\sqrt{t}}\right)
+e^t \sqrt{t} \log d
\right], 
}
where $c_0>0$ is a universal constant. 
Now we specify the value of $t_0$ in \eqref{t} as
\ben{\label{t0}
t_0=\min\{(\frac{2c_0\Delta_1}{\alpha^2\beta^2})^2, 
\frac{1}{2}\}.
}
If $t_0<1/2$, then from $t=(\frac{2c_0\Delta_1}{\alpha^2 \beta^2})^2+\delta^2 \log d\asymp \Delta_2$ and \eqref{somewhere},
\ben{\label{rho-est}
\frac{1}{2}\rho
\leq C\left[
(|\log \Delta_2|\vee1)\left(
\frac{\Delta_0}{\alpha^2}+\frac{\Delta_1}{\alpha^2\beta^2} \kappa\sqrt{\log d}
\right)
+e^t\sqrt{\Delta_2} \log d
\right]. 
}
Since $t\leq1$ (from $t_0<1/2$ and  \eqref{wlog}), we have $e^t\leq e$. We also have $-\log(1-e^{-t})\leq C(|\log t|\vee1)$ and $\kappa\leq C\sqrt{\log d+|\log \Delta_2|\vee1}$. Hence we obtain
\ba{
&(|\log \Delta_2|\vee1)\frac{\Delta_1}{\alpha^2\beta^2} \kappa\sqrt{\log d}+e^t\sqrt{\Delta_2} \log d\\
&\leq C\left[(|\log \Delta_2|\vee1) \left[\frac{\Delta_1}{\alpha^2\beta^2}\log d  + \frac{\Delta_1}{\alpha^2\beta^2} \sqrt{\log d} \sqrt{ |\log \Delta_2|\vee 1} \right]  +\sqrt{\Delta_2} \log d\right]\\
&\leq C\left[(|\log \Delta_2|\vee1) \left[\frac{\Delta_1}{\alpha^2\beta^2}\log d  + \frac{\Delta_1}{\alpha^2\beta^2} \sqrt{\log d} \sqrt{ |\log \Delta_2|\vee 1} \right]  +\delta (\log d)^{3/2}\right].
}
Inserting this bound into \eqref{rho-est}, we obtain
\ben{\label{aim}
\rho\leq C\left\{\frac{\Delta_0}{\alpha^2}(|\log\Delta_2|\vee1)
+\frac{\Delta_1}{\alpha^2\beta^2} \left[ (\log d)(|\log\Delta_2|\vee1)+\sqrt{\log d}(|\log\Delta_2|\vee1)^{3/2}  \right]+\delta (\log d)^{3/2} \right\},
} 
which is the desired bound \eq{eq:fklz-bdd}. 
If $t_0=1/2$, then we have $2\sqrt{2} c_0 \Delta_1/(\alpha^2\beta^2)\geq1\geq\rho$, so \eqref{aim} is satisfied with $C=2\sqrt{2} c_0$. 
Overall, we complete the proof of \eq{eq:fklz-bdd}. 

\subsection{Step 3: Unbounded case}

\if0
Note that from the assumption of \cref{thm:fklz}, we have $B\geq 1$ and $\alpha, \beta$ can be taken to be $0< \beta^2\leq \alpha^2\leq 1$.
Without loss of generality, we may assume
\ben{\label{wlog2}
\frac{10^4 B^3(\log(dn))^3(\log d)^3}{\alpha^4\beta^4 n}\leq 1;
}
otherwise, the claim trivially holds true with sufficiently large $C$. 

Set $\kappa_n:=B\sqrt{5\log (dn)}$. For $i=1,\dots,n$ and $j=1,\dots,d$, define 
\[
\hat {X}_{ij}:=X_{ij}1_{\{|X_{ij}|\leq\kappa_n\}}-\E X_{ij}1_{\{|X_{ij}|\leq\kappa_n\}},
\] 
and set $\hat {W}:=n^{-1/2}\sum_{i=1}^n\hat {X}_i$ with $\hat{X}_i=(\hat{X}_{i1},\dots,\hat{X}_{id})^\top$. 
Note that $\max_{i,j}|\hat{X}_{ij}|\leq2\kappa_n$. Also, from the arguments in Step 2 of the proof of \cite[Corollary 1.3]{fang2021high}, we have 
\ben{\label{t2-0}
\sup_{b\in \IR^d} |P(W\leq b)- P(G\leq b)|
\leq C\left(\frac{1}{2n^4}+\frac{B \log d \sqrt{\log(dn)}}{n^2}+\hat\rho\right),
}
where
\[
\hat\rho:=\sup_{b\in \IR^d} |P(\hat W\leq b)- P(G\leq b)|.
\]
To estimate $\hat\rho$, we apply \eq{eq:fklz-bdd} to $\hat W$ with $\delta=2\kappa_n/\sqrt{n}$. 
For this purpose, we bound the quantities in \eq{eq:fklz-bdd}. 
First, from the argument in Step 3 of the proof of \cite[Corollary 1.3]{fang2021high}, we obtain
\ban{\label{t2-1a}
\Delta_0\leq C(\log d)\left[\norm{\Sigma-\Cov(W)}_\infty+\frac{B^2\log(dn)}{n} \right]\leq C(\log d)\left[\norm{\Sigma-\Cov(W)}_\infty+\frac{B}{\sqrt{n}} \right],
}
where the last inequality follows from \eqref{wlog2}. 

We also have
\ban{\label{t2-1b}
\Delta_1\leq \frac{90B^3}{\sqrt{n}} (\log(dn))^{3/2}(\log d)^{3/2}.
}
We then have
\ben{\label{t2-2}
\Delta_2= (\frac{\Delta_1}{\alpha^2\beta^2})^2+\frac{20B^2\log(dn)\log d}{n}
\leq \frac{10^4 B^3(\log(dn))^3(\log d)^3}{\alpha^4\beta^4 n}.
}
In particular, we have $\Delta_2\leq1$ by \eqref{wlog2}, so we obtain
\ben{\label{t2-3}
|\log\Delta_2|=-\log\Delta_2\leq-\log\frac{20B^2\log(dn)\log d}{n}\leq\log n.
}
Inserting \eqref{t2-1a}--\eqref{t2-3} into \eqref{eq:fklz-bdd} and combining with \eq{t2-0}, we complete the proof of \eq{eq:fklz}.
\fi

Note that in proving \eq{eq:fklz}, we may assume $\norm{\Cov(W)-\Sigma}_\infty\leq 1/2$ since $\rho(W,G)\leq 1$. We then have $2B^2\geq 1/2$ (cf. \eq{eq:star3}).
If $1>B\geq 1/2$ and we increase $B$ to 1, the condition of \cref{thm:fklz} still holds and the conclusion \eq{eq:fklz} is invariant.
Therefore, we can assume without loss of generality that $B\geq 1$.
Recall that we have assumed without loss of generality that $1\geq \alpha\geq \beta>0$.
Without loss of generality, we may further assume
\ben{\label{wlog2}
\frac{10^4 B^6(\log n)^6(\log d)^3}{\alpha^4\beta^4 n}\leq 1;
}
otherwise, the bound \eq{eq:fklz} trivially holds true with sufficiently large $C$. 

Set $\kappa_n:=2B\log n$. For $i=1,\dots,n$ and $j=1,\dots,d$, define 
\[
\hat {X}_{ij}:=X_{ij}1_{\{|X_{ij}|\leq\kappa_n\}}-\E X_{ij}1_{\{|X_{ij}|\leq\kappa_n\}},
\] 
and set $\hat {W}:=n^{-1/2}\sum_{i=1}^n\hat {X}_i$ with $\hat{X}_i=(\hat{X}_{i1},\dots,\hat{X}_{id})^\top$. 
Note that $\max_{i,j}|\hat{X}_{ij}|\leq2\kappa_n$. 
Also, observe that
\[
X_{ij}-\hat {X}_{ij}=X_{ij}1_{\{|X_{ij}|>\kappa_n\}}-\E X_{ij}1_{\{|X_{ij}|>\kappa_n\}}.
\]
Therefore, for any integer $p\geq2$,
\besn{\label{eq:lp}
\E|X_{ij}-\hat {X}_{ij}|^p&\leq2^p\E[|X_{ij}|^p1_{\{|X_{ij}|>\kappa_n\}}]
\leq2^{p+1}p!e^{-\kappa_n/B}(\kappa_n+B)^p\\
&\leq\frac{2^{2p+1}p!\kappa_n^p}{n^2}
=\frac{p!(4\kappa_n)^{p-2}(8\kappa_n/n)^2}{2},
}
where the second inequality follows by Lemma 5.4 in \cite{Ko21}. Thus, by the Bernstein inequality (cf.~\citealp[Lemma 2.2.11]{VW96}), we have for any $t>0$
\ba{
P(|W_j-\hat W_j|>t)\leq2\exp\left(-\frac{1}{2}\frac{t^2}{(8\kappa_n/n)^2+(4\kappa_n/\sqrt n)t}\right).
}
Taking $t=2(8\kappa_n/\sqrt n)\log(dn)$, we obtain
\ba{
P(|W_j-\hat W_j|>t)\leq2\exp\left(-\log(dn)\right)\leq\frac{2}{dn}.
}
Hence, by the union bound, $P(\max_j|W_j-\hat W_j|>32B(\log n)\log(dn)/\sqrt n)\leq2/n$. From this estimate and the Gaussian anti-concentration inequality in \cref{l1}, we have
\bes{
P(W\leq b)-P(G\leq b)\leq & P(\hat W\leq b+\frac{32B(\log n)\log(dn)}{\sqrt n}\textbf{1}_d)-P(G\leq b+\frac{32B(\log n)\log(dn)}{\sqrt n}\textbf{1}_d)\\
& +P(\max_j |W_j-\hat W_j|>\frac{32B(\log n)\log(dn)}{\sqrt n})\\
&+P(G\leq b+\frac{32B(\log n)\log(dn)}{\sqrt n}\textbf{1}_d)-P(G\leq b)\\
\leq & C\left(\hat\rho+\frac{2}{n}+\frac{B \sqrt{\log d}\log(dn)}{\sqrt n}\log n\right),
}
where
\[
\hat\rho:=\sup_{b\in \IR^d} |P(\hat W\leq b)- P(G\leq b)|.
\]
From a similar argument for the lower bound, we conclude
\ben{\label{t2-0}
\sup_{b\in \IR^d} |P(W\leq b)- P(G\leq b)|
\leq C\left(\frac{2}{n}+\frac{B \sqrt{\log d}\log(dn)}{\sqrt n}\log n+\hat\rho\right),
}

To estimate $\hat\rho$, we apply \eq{eq:fklz-bdd} to $\hat W$ with $\delta=2\kappa_n/\sqrt{n}$.
For this purpose, we bound the quantities in \eq{eq:fklz-bdd}. 
First, observe that 
\ba{
\norm{\Cov(W)-\Cov(\hat W)}_\infty
&=\max_{j,k}\left|\frac{1}{n}\sum_{i=1}^n(\E[X_{ij}X_{ik}]-\E[\hat X_{ij}\hat X_{ik}])\right|\\
&\leq\max_{i,j,k}(\sqrt{\E|X_{ij}-\hat X_{ij}|^2\E|X_{ik}|^2}+\sqrt{\E|\hat X_{ij}|^2\E|X_{ik}-\hat X_{ik}|^2})\\
&\leq C\frac{\kappa_n}{n}B\leq C\frac{B^2}{\sqrt n},
}
where we used \eq{eq:lp} and $\E X_{ij}^2\leq 2 B^2$ from the sub-exponential condition (cf. \eq{eq:star3}).
Hence
\ban{\label{t2-1a}
\Delta_0
\leq C(\log d)\left[\norm{\Cov(W)-\Sigma}_\infty+\frac{B^2}{\sqrt n} \right].
}

We also have
\ban{\label{t2-1b}
\Delta_1\leq \frac{4^3B^3}{\sqrt{n}} (\log n)^{3}(\log d)^{3/2}.
}
We then have (recall $B\geq 1$)
\ben{\label{t2-2}
\Delta_2= (\frac{\Delta_1}{\alpha^2\beta^2})^2+\frac{4^2B^2(\log n)^2\log d}{n}
\leq \frac{10^4 B^6(\log n)^6(\log d)^3}{\alpha^4\beta^4 n}.
}
In particular, we have $\Delta_2\leq1$ by \eqref{wlog2}, so we obtain
\ben{\label{t2-3}
|\log\Delta_2|=-\log\Delta_2\leq-\log\frac{4^2B^2(\log n)^2\log d}{n}\leq\log n.
}
Inserting \eqref{t2-1a}--\eqref{t2-3} into \eqref{eq:fklz-bdd} and combining with \eq{t2-0}, we complete the proof of \eq{eq:fklz}.


\section{Proof of lemmas}\label{sec:prooflem} 

\begin{proof}[Proof of \cref{lem:AHT}]

We first introduce some notation to denote the facets of the convex polytope $A$, the facets of facets, and so on.
Each unit vector $v_j$ in $V_1$ corresponds to a possible facet of $A$, denoted by $F_j$. When a convex polytope is $D$-dimensional, we only consider its $(D-1)$-dimensional facets ($D-2$ or even lower-dimensional corners do not enter into consideration after applying the divergence theorem below).
Then, $F_j$ is a $(d-1)$-dimensional convex polytope with at most $d-1$ facets, denoted by $F_{jk}$, created by intersecting with other $F_k, k\ne j$. Note that $F_{jk}=F_{kj}$. Finally, $F_{jk}$ is a $(d-2)$-dimensional convex polytope with at most $d-2$ facets, denoted by $F_{jkl}$, created by intersecting with $F_l, l\ne j, k$. Note that $F_{jkl}$ is invariant under the permutation of its sub-indices.



For the sake of discussion below, we assume $A$ is sufficiently regular, that is, no more than two facets intersect on a $(d-2)$-dimensional subspace, etc. 
This will be guaranteed by \cref{rankassumption1,rankassumption2,rankassumption3} (see \cref{lem:disjoint}).
More precisely, without loss of generality, we make the following assumptions. 
For  any $1\leq j<k\leq d$, if $\rank \begin{bmatrix} v_j & v_k \end{bmatrix}<2$, then
\begin{equation}\label{rankassumption1}
\rank \begin{bmatrix} v_j & v_k\end{bmatrix}<\rank \begin{bmatrix} v_j & v_k\\ b_j&b_k \end{bmatrix}. 
\end{equation}
For  any $1\leq j<k<l\leq d$, if $\rank \begin{bmatrix} v_j & v_k & v_l\end{bmatrix}<3$, then
\begin{equation}\label{rankassumption2}
\rank \begin{bmatrix} v_j & v_k & v_l\end{bmatrix}<\rank \begin{bmatrix} v_j & v_k & v_l\\ b_j&b_k&b_l \end{bmatrix}. 
\end{equation}
For  any $1\leq j<k<l<s\leq d$, if $\rank \begin{bmatrix} v_j & v_k & v_l& v_s\end{bmatrix}<4$, then
\begin{equation}\label{rankassumption3}
\rank \begin{bmatrix} v_j & v_k & v_l& v_s\end{bmatrix}<\rank \begin{bmatrix} v_j & v_k & v_l& v_s\\ b_j&b_k&b_l&b_s \end{bmatrix}. 
\end{equation}
In fact, if 
$$\rank \begin{bmatrix} v_j & v_k \end{bmatrix}=\rank \begin{bmatrix} v_j & v_k \\ b_j&b_k\end{bmatrix}<2,$$
or
$$\rank \begin{bmatrix} v_j & v_k & v_l\end{bmatrix}=\rank \begin{bmatrix} v_j & v_k & v_l\\ b_j&b_k&b_l\end{bmatrix}<3,$$
or
$$\rank \begin{bmatrix} v_j & v_k & v_l& v_s\end{bmatrix}=\rank \begin{bmatrix} v_j & v_k & v_l& v_s\\ b_j&b_k&b_l&b_s\end{bmatrix}<4,$$ we can make an infinitesimally small perturbation of $b_j$, $j=1,...,d$ to make \cref{rankassumption1,rankassumption2,rankassumption3} hold and use an approximation argument to draw the same conclusion (note that the upper bounds in \cref{lem:AHT} are independent of the $b$'s).


\paragraph{Proof of \eq{eq:AHTdeg1}.}
We first consider
\be{
\int_A \langle u, \nabla \phi_d (z) \rangle dz.
}
From the divergence theorem\footnote{See, for example, Theorem 5.16 in \cite{EvGa15}.}, we obtain
\be{
\int_A \langle u, \nabla \phi_d (z) \rangle dz=
\sum_{F_j:\ \text{facet of}\ A} (u\cdot v_j)
\int_{F_j} \phi_d (z)  \mcl H^{d-1}(dz),
}
where $\mcl H^{d-1}$ is the $(d-1)$-dimensional Hausdorff measure.
From Nazarov's inequality (cf. \cref{cor1} or \cite{chernozhukov2017detailed}), we obtain
the desired bound.

\paragraph{Proof of \eq{eq:AHTdeg2}.} Next, we consider 
\be{
\int_A \langle M, \nabla^2 \phi_d (z)\rangle dz,
}
where $M$ is a $d\times d$ matrix.
Given any unit vector $v_j$, we take vectors $w_{j}^{(1)},\dots,w^{(d-1)}_{j}\in\mathbb R^d$ such that $w_j^{(1)},\dots,w_j^{(d-1)},v_j$ constitute an orthonormal basis of $\mathbb R^d$. Set $U_j:=(w_j^{(1)},\dots,w_j^{(d-1)})\in\mathbb R^{d\times(d-1)}$ and $\bar U_j:=(U_j,v_j)\in\mathbb R^{d\times d}$. By construction, $\bar U_j$ is an orthogonal matrix. Hence, for any $z\in\mathbb R^d$, 
\begin{equation}
   \label{eq:2dri3}
  \begin{aligned}
|z|^2=|\bar U_j^\top z|^2=|U_j^\top z|^2+|v_j\cdot z|^2. 
  \end{aligned}
\end{equation}
Using the divergence theorem in the second equation and the orthogonal invariance of inner products and $\phi_d(\cdot)$ in the third equation, we obtain
\begin{equation*}
  \begin{split}
    &\int_A \langle M, \nabla^2 \phi_d (z)\rangle dz= \int_A \nabla \cdot (M \nabla \phi_d(z)))dz\\
    &=-\sum_{F_j:\ \text{facet of}\ A}  \int_{F_j}   \langle v_j , M z\rangle \phi_d(z) \mcl H^{d-1}(dz)\\
    &=-\sum_{F_j:\ \text{facet of}\ A}  \int_{F_j} \langle \bar{U}_j^\top M^\top v_j, \bar{U}_j^\top z\rangle \phi_d(\bar{U}_j^\top z) \mcl H^{d-1}(dz)\\
&=-\sum_{F_j:\ \text{facet of}\ A}  \int_{F_j} ((v_j^\top M^\top v_j)(v_j \cdot z)+ \langle {U}_j^\top M^\top v_j, {U}_j^\top z\rangle  ) \phi_{d-1}({U}_j^\top z)\phi_1(v_j\cdot z) \mcl H^{d-1}(dz)
\\&=I+II,
  \end{split}
\end{equation*}
where 
$$I=-\sum_{F_j:\ \text{facet of}\ A} ( v_j^\top M v_j)\int_{F_j} (v_{j}\cdot z) \phi_{d}(\bar{U}_{j}^\top z) \mcl H^{d-1}(dz);$$
$$II=-\sum_{F_j:\ \text{facet of}\ A}  \int_{F_j} \langle {U}_j^\top M^\top v_j, {U}_j^\top z\rangle   \phi_{d-1}({U}_j^\top z)\phi_1(v_j\cdot z) \mcl H^{d-1}(dz).$$
From the first equality in \cref{eq:2dri3}, we obtain 
\begin{equation}
   \label{eq:2dri4}
  \begin{aligned}
    I&=-\sum_{F_j:\ \text{facet of}\ A} ( v_j^\top M v_j) \int_{F_j} (v_{j}\cdot z) \phi_{d}( z) \mcl H^{d-1}(dz).
  \end{aligned}
\end{equation}

For $II$, Let $x_{j}$ be a point in $F_j$. Then, by the definition of $v_j$ and $F_j$, we have 
$$b_j=v_j\cdot x_j=v_j\cdot z,\quad   \forall z\in F_{j}.$$
From a change of variables\footnote{For example, applying Theorem 3.9 in \cite{EvGa15} with $n=d-1,m=d,f(y)=U_jy$ and $g(y)=\phi_{d-1}(y)1_{U_j^\top F_j}(y)$.}, we obtain 
\begin{equation}
   \label{eq:twicedri}
  \begin{aligned}
    II&=-\sum_{F_j:\ \text{facet of}\ A}  \phi_1(v_j\cdot x_{j})\int_{F_j} \langle {U}_j^\top M^\top v_j, {U}_j^\top z\rangle  \phi_{d-1}({U}_j^\top z) \mcl H^{d-1}(dz) 
\\&=-\sum_{F_j:\ \text{facet of}\ A}  \phi_1(v_j\cdot x_{j})\int_{U_j^\top F_j} \langle {U}_j^\top M^\top v_j,  z\rangle  \phi_{d-1}(z) dz
\\&=\sum_{F_j:\ \text{facet of}\ A}  \phi_1(v_j\cdot x_{j})\int_{U_j^\top F_j} \langle {U}_j^\top M^\top v_j,  \nabla\phi_{d-1}(z)\rangle   dz.
  \end{aligned}
\end{equation}
Recall that $F_j$ is a $(d-1)$-dimensional polytope with at most $d-1$ facets $F_{jk}$ with out-pointing unit normal $v_{jk}\in \IR^d$ as defined above \cref{lem:AHT}. Given any $v_j, v_{jk}$,
we take vectors $w_{jk}^{(1)},\dots,w^{(d-2)}_{jk}\in\mathbb R^d$ such that $w^{(1)}_{jk},\dots,w^{(d-2)}_{jk},v_j,v_{jk}$ constitute an orthonormal basis of $\mathbb R^d$. Set $$U_{jk}:=(w_{jk}^{(1)},\dots,w_{jk}^{(d-2)})\in\mathbb R^{d\times(d-2)};$$
$$U_{jk}^*:=(U_j^\top w_{jk}^{(1)},\dots,U_j^\top w_{jk}^{(d-2)})\in\mathbb R^{(d-1)\times(d-2)}.$$
By the divergence theorem and change of variables again, and using $(U_{jk}^*)^\top U_j^\top=U_{jk}^\top$ (both sides equal to the same projection from $\IR^d$ to $\IR^{d-2}$), we have
\begin{equation}
   \label{eq:2dri1}
  \begin{aligned}
    II \quad &=\sum_{F_j:\ \text{facet of}\ A} \sum_{F_{jk}:\ \text{facet of}\ F_{j}}(v_{jk}^\top M^\top v_j)\phi_1(v_j\cdot x_{j})\phi_1(v_{jk}\cdot x_{jk})\int_{(U_{jk}^*)^\top U_j^\top F_j}    \phi_{d-2}(z) dz\\&=\sum_{F_j:\ \text{facet of}\ A} \sum_{F_{jk}:\ \text{facet of}\ F_{j}}(v_{jk}^\top M^\top v_j)\phi_1(v_j\cdot x_{j})\phi_1(v_{jk}\cdot x_{jk})\int_{U_{jk}^\top F_j}    \phi_{d-2}(z) dz,
  \end{aligned}
\end{equation}
where $x_{jk}$ is a point in $F_{jk}$ and we used $(U_j^\top M^\top v_j)\cdot (U_j^\top v_{jk})=(\bar U_j^\top M^\top v_j)\cdot (\bar U_j^\top v_{jk})=v_{jk}^\top M^\top v_j$. Similarly, by change of variables, we also have 
\be{
\phi_1(v_j\cdot x_{j})\phi_1(v_{jk}\cdot x_{jk})\int_{U_{jk}^\top F_j}    \phi_{d-2}(z) dz=\int_{F_{jk}} \phi_d(z) \mcl H^{d-2}(dz).
}
Therefore,
\begin{equation}
   \label{eq:2dri2}
  \begin{aligned}
  II=\sum_{F_j:\ \text{facet of}\ A}\sum_{F_{jk}:\ \text{facet of}\ F_j} (v_{j}^\top M v_{jk}) \int_{F_{jk}} \phi_d(z) \mcl H^{d-2}(dz).
  \end{aligned}
\end{equation}
Combining \cref{eq:2dri4,eq:2dri2}, we obtain
\begin{equation}
   \label{eq:2dri5}
  \begin{aligned}
    \int_A \langle M, \nabla^2 \phi_d (z)\rangle dz= \sum_{F_j:\ \text{facet of}\ A}(R_{1j} + R_{2j}),
  \end{aligned}
\end{equation}
where
\ben{\label{eq:r1j}
R_{1j}=( v_j^\top M v_j)\int_{F_j} \partial_{v_j} \phi_d(z) \mathcal{H}^{d-1} (dz),
}
$\partial_{v_j}\phi_d(z)=v_j\cdot \nabla \phi_d(z)$ denotes the directional derivative,
and
$$R_{2j}=\sum_{F_{jk}:\ \text{facet of}\ F_j} (v_{j}^\top M v_{jk}) \int_{F_{jk}} \phi_d(z) \mcl H^{d-2}(dz).$$




Following the proof of Eq.(1) of \cite{chernozhukov2017detailed} (bounding a standard normal density by its tail probability $\phi_1(r)\leq (r+1)\int_r^\infty \phi_1(x)dx$ for $r\geq 0$), we have
\ben{\label{eq:normaltail}
\left|\int_{F_j} \partial_{v_j} \phi_d(z) \mcl H^{d-1}(dz)\right|\leq\text{dist}(0,F_j)(\text{dist}(0,F_j)+1)\gamma_d(S_j),
}
where $\text{dist}(0, F_j)$ is the distance from 0 to the hyperplane containing $F_j$, $\gamma_d$ is the $d$-dimensional standard Gaussian measure, $S_j:=\{y+t v_j: y\in \rel(F_j), t> 0\}$ and $\rel(F_j)$ is the relative interior of $F_j$.
Following the arguments on p.4 of \cite{chernozhukov2017detailed}, we have (using the normal tail bound and \eq{eq:r1j} for the first sum and \eq{eq:normaltail} and disjointness of $S_j$'s proved in \cref{lem:Sjdis} for the second sum)
\besn{\label{eq:R1jtruncation}
  \left|\sum_{F_j:\  \text{facet of}\  A}  R_{1j}\right|=&\left|\sum_{\text{dist}(0,F_j)>\sqrt{4\log d}} R_{1j}+\sum_{\text{dist}(0,F_j)\leq \sqrt{4\log d}} R_{1j}\right|\\
  \leq& C d\frac{1}{d} \max_{v\in V_1} |v^\top M v| + C(\sqrt{\log d})^2 \max_{v\in V_1} |v^\top M v|   \\
  \leq& C \max_{v\in V_1} |v^\top M v|\log d.
}

From $F_{jk}=F_{kj}$, we write
\ben{\label{eq:R2j}
\sum_{F_j:\  \text{facet of}\  A}  R_{2j}=\sum_{F_{jk}:\  \text{facet of}\  F_j\atop 1\leq j<k\leq d} \left[  v_j^\top M  v_{jk}+v_k^\top M v_{kj} \right] \int_{F_{jk}} \phi_d(z) \mcl H^{d-2}(dz).
}
Let
\ben{\label{eq:Sjk}
S_{jk}:=\{y+s v_j+t v_k: y\in \rel(F_{jk}), s> 0, t> 0\}   ,\ \text{for}\  1\leq j< k\leq d\ \text{and $F_{jk}$ is a facet of $F_j$}.
}
We will prove in \cref{sjkdisjoint} that $S_{jk}$'s are disjoint. 



Now we consider each summand on the right-hand side of \eq{eq:R2j}.
Denote by $\bar x_{jk}$ the projection of 0 to the $(d-2)$-dimensional affine subspace containing $F_{jk}$.
Note that $\bar x_{jk}\in span\{v_j, v_k\}$, where $span$ denotes the subspace generated by the vectors.
We define a new set $\bar S_{jk}$ by (possibly) decreasing and rotating $S_{jk}$ around $\bar x_{jk}$ in the two-dimensional subspace spanned by $v_j$ and $v_k$ as follows. 
Consider the circle centered at $\bar x_{jk}$ with unit radius in $span\{v_j, v_k\}$. Extend the line from 0 to $\bar x_{jk}$ until it intersects with the circle, call the intersection the pole. If $0=\bar x_{jk}$, then define $v_j$ as the pole. The two points $\bar x_{jk}+v_j$ and $\bar x_{jk}+v_k$ form an arc with degree less than $\pi$ on the circle. We assume without loss of generality that the degree of the arc is less than or equal to $\pi/2$. Otherwise, we rotate $v_k$ and decrease $S_{jk}$. This does not affect the conclusion \eq{Fjkalter} below.
If the arc contains the pole, then do nothing. If the arc does not contain the pole, pick between $\bar x_{jk}+v_j$ and $\bar x_{jk}+v_k$ the point closer to the pole. If the distances are equal, pick either point. Say this point is $\bar x_{jk}+v_j$. Then, we rotate the arc so that $\bar x_{jk}+v_j$ equals the pole. Denote by $\bar v_j$ and $\bar v_k$ the rotated versions of $v_j$ and $v_k$ respectively after the above rotation. Define
\be{
\bar S_{jk}:=\{y+s \bar v_j+t \bar v_k: y\in \rel(F_{jk}), s>0, t>0\}.
}
Recall that the arc degree is less than or equal to $\pi/2$. The above rotation moves every point on the arc closer to the pole, hence increases their Euclidean norms.
Therefore, we have
\ben{\label{eq:gammabars}
\gamma_d(\bar S_{jk})\leq \gamma_d(S_{jk}).
}

For each integral on the right-hand side of \eq{eq:R2j},
the case $\text{dist}(0, F_{jk})>2\sqrt{\log d}$ can be dealt with easily using the normal tail bound as in \eq{eq:R1jtruncation}. 
Let $u_{jk}=\bar x_{jk}/|\bar x_{jk}|$. If $\bar x_{jk}=0$, let $u_{jk}=v_j$. Let $u_{jk}^\perp$ be a unit vector in $span\{v_j, v_k\}$ and orthogonal to $u_{jk}$.
For the case $\text{dist}(0, F_{jk})\leq 2\sqrt{\log d}$, we bound two standard normal densities by normal tail probabilities in two orthogonal directions to $F_{jk}$ and obtain
\ben{\label{eq:star1}
\left|\int_{F_{jk}} \phi_d(z) \mcl H^{d-2}(dz)\right| \leq (\text{dist}(0, F_{jk})+1)^2 \gamma_d(N_{F_{jk}}),
}
where 
\ben{\label{eq:NFjk}
N_{F_{jk}}:=\{y+s(u_{jk}+u_{jk}^\perp) + t(u_{jk}-u_{jk}^\perp): y\in \rel(F_{jk}), s>0, t>0\}.
}
Suppose the angle between $v_j$ and $v_k$ is $\theta>0$. Then, computing standard Gaussian probabilities by first integrating along the coordinate $u_{jk}$, we have, with $t=|\bar x_{jk}|$,
\be{
\frac{\gamma_d(N_{F_{jk}})}{\gamma_d(\bar S_{jk})}=\frac{\int_t^\infty \phi_1(u) \gamma_1[-(u-t), (u-t)] du}{\int_t^\infty \phi_1(u) \gamma_1[a(u,t) ,b(u,t)] du},
}
where $[a(u,t) ,b(u,t)]$ is an interval containing $0$ and with length larger than $(u-t)\theta$. Because Gaussian density is decreasing in the absolute value of its argument, we have $\gamma_1[-(u-t), (u-t)]/\gamma_1[a(u,t) ,b(u,t)] \leq C/\theta$, hence
\ben{\label{Fjkalter}
\gamma_d(N_{F_{jk}})\leq \frac{C}{\theta} \gamma_d(\bar S_{jk})\leq \frac{C}{\theta}\gamma_d( S_{jk}),
}
where we used \eq{eq:gammabars} in the last inequality.

If $\theta>1/10$, then from \eq{eq:star1} and \eq{Fjkalter}, we have (recall we have assumed $\text{dist}(0, F_{jk})\leq 2\sqrt{\log d}$)
\ben{\label{Fjk}
\left|  v_j^\top M  v_{jk}+v_k^\top M v_{kj} \right| \int_{F_{jk}} \phi_d(z) \mcl H^{d-2}(dz)\leq C(\log d)\Delta \gamma_d(S_{jk}),
}
where 
\be{
\Delta:=\max_{j,k} (|v_j^\top M v_j|+|v_j^\top M v_{jk}|+ |v_{jk}^\top M v_{jk}| )
}
 and the max is taken over all $1\leq j\ne k\leq d$ such that $v_k\ne - v_j$.
If $\theta\leq 1/10$, then
\begin{equation}
   \label{eq:dri6}
  \begin{aligned}
    v_j^\top M  v_{jk}+v_k^\top M v_{kj}& = (v_j-v_k)^\top M  v_{jk}+v_k^\top M (v_{kj}+v_{jk})\\
                                     &= (O(\theta)v_{jk}+O(\theta^2) v_j)^\top M  v_{jk}+v_k^\top M (O(\theta)v_k+ O(\theta^2) v_{kj})\\
                                     &=O(\theta)\Delta,
  \end{aligned}
\end{equation}
and hence \eq{Fjk} still holds.
Then, from the facts that $S_{jk}$'s are disjoint and $\gamma_d$ is a probability measure, the contribution from those $1\leq j<k\leq d$ such that $\text{dist}(0, F_{jk})\leq 2\sqrt{\log d}$ to the right-hand side of \eq{eq:R2j} is bounded by $$C\Delta \log d.$$
Combining the two cases, we obtain
\bes{
&\left|\sum_{F_j:\  \text{facet of}\  A} R_{2j}\right|\leq C \Delta\log d.
}
This, together with \eq{eq:R1jtruncation}, gives the desired bound  \cref{eq:AHTdeg2} in \cref{lem:AHT}.

\paragraph{Proof of \eq{eq:AHTdeg3}.}
Finally, we consider
\be{
\int_A\langle T, \nabla^3 \phi_d (z) \rangle dz.
}
Without loss of generality, we assume $1\geq \alpha\geq \beta>0$.
Following a similar and more tedious proof (that we omit) leading to \eq{eq:2dri5}, we obtain
\besn{\label{eq:thirdderi}
&\int_A\langle T, \nabla^3 \phi_d (z) \rangle dz\\
=& \sum_{F_j:\  \text{facet of}\  A} \langle T, v_j\otimes v_j \otimes v_j\rangle \int_{F_j} \partial^2_{v_j}\phi_d(z)\mcl H^{d-1}(dz)\\
&+\sum_{F_j:\  \text{facet of}\  A}  \sum_{F_{jk}:\  \text{facet of}\  F_j} \langle T, v_j\otimes v_j \otimes v_{jk}\rangle \int_{F_{jk}} \partial_{v_{j}}\phi_d(z)\mcl H^{d-2}(dz)\\
&+\sum_{F_j:\  \text{facet of}\  A} \sum_{F_{jk}:\  \text{facet of}\  F_j}  
 \langle T, v_j\otimes v_{jk} \otimes v_j\rangle\int_{F_{jk}} \partial_{v_{j}}\phi_d(z)\mcl H^{d-2}(dz)\\
&+\sum_{F_j:\  \text{facet of}\  A} \sum_{F_{jk}:\  \text{facet of}\  F_j}   \langle T, v_j\otimes v_{jk} \otimes v_{jk}\rangle\int_{F_{jk}} \partial_{v_{jk}}\phi_d(z)\mcl H^{d-2}(dz)\\
&+\sum_{F_j:\  \text{facet of}\  A} \sum_{F_{jk}:\  \text{facet of}\  F_j}  \sum_{F_{jkl}:\  \text{facet of}\  F_{jk}} \langle T, v_j\otimes v_{jk} \otimes v_{jkl}\rangle\int_{F_{jkl}} \phi_d(z)\mcl H^{d-3}(dz).
}
Following similar arguments as for the second-derivative bound, the first three terms of \eq{eq:thirdderi} are bounded by 
\besn{
\label{123term}
 {C (\log d)^{3/2}} \max_{v_1\in V_1, v_2,v_3\in V_1\cup V_2} |\langle T, v_1 \otimes v_2 \otimes v_3\rangle|,
}
and the fourth term is bounded by 
\ben{
\label{4thterm}
 \frac{C (\log d)^{3/2}}{\alpha} \max_{v_1\in V_1,v_2,v_3\in V_2} |\langle T, v_1 \otimes v_2 \otimes v_3\rangle|,
}
where $\alpha$ is the minimum angle between two distinct $v_j, v_k$. We note that the trick in \cref{eq:dri6,Fjk} does not work for the fourth term, hence the denominator $\alpha$ in its upper bound \cref{4thterm}.
Because $\alpha\geq \beta$,  \cref{123term,4thterm} are bounded by the right-hand side of \eq{eq:AHTdeg3}.

Following the arguments leading to \cref{eq:R2j}, the last term in \eq{eq:thirdderi} can be written as
\begin{equation}\label{Rjkl}
\begin{aligned}
&\sum_{F_{jkl}:\ \text{facet of}\ F_{jk}\atop 1\leq j< k<l\leq d} R_{jkl}\\
     :=&\sum_{F_{jkl}:\ \text{facet of}\ F_{jk}\atop 1\leq j< k<l\leq d} \bigg[ \langle T,
   v_j\otimes v_{jk} \otimes v_{jkl}\rangle+\langle T,v_k\otimes v_{kj} \otimes v_{jkl}\rangle+\langle T,v_j\otimes v_{jl} \otimes v_{jlk}\rangle
   \\&+\langle T,v_l\otimes v_{lj} \otimes v_{jlk}\rangle+\langle T,v_k\otimes v_{kl} \otimes v_{lkj}\rangle+\langle T,v_l\otimes v_{lk} \otimes v_{lkj}
   \rangle \bigg] \int_{F_{jkl}} \phi_d(z) \mcl{H}^{d-3}(dz).
\end{aligned}
\end{equation}
Let
\besn{\label{eq:Sjkl}
S_{jkl}:=&\{y+s v_j+t v_k+r v_l: y\in \rel(F_{jkl}), s> 0, t> 0,r>0\} \\
&\qquad\qquad\qquad \text{for}\  1\leq j< k<l\leq d\ \text{and $F_{jkl}$ is a facet of $F_{jk}$}.
}
We will prove in \cref{sjkldisjoint} that $S_{jkl}$'s are disjoint. 

Following similar but more technical arguments leading to \eq{Fjk} (we leave the details to \cref{sec:Fjkl2}), we obtain
\begin{equation}
   \label{eq:Fjkl2}
  \begin{aligned}
\int_{F_{jkl}} \phi_d(z) \mcl{H}^{d-3}(dz) \leq \frac{C(\log d)^{3/2}}{\alpha\beta} \gamma_d(S_{jkl}).
  \end{aligned}
\end{equation}
Using the fact that $S_{jkl}$'s are disjoint and $\gamma_d$ is a probability measure, we obtain
\begin{equation}
   \label{eq:sumR}
  \begin{aligned}
  &\sum_{F_{jkl}:\ \text{facet of}\ F_{jk}\atop 1\leq j< k<l\leq d} R_{jkl}\leq \frac{C (\log d)^{3/2}}{\alpha\beta} \max_{v_1\in V_1, v_2\in V_2,\atop v_3\in V_3} |\langle T, v_1 \otimes v_2 \otimes v_3\rangle|.
  \end{aligned}
\end{equation}
Therefore, we finish the proof of \cref{eq:AHTdeg3} in  \cref{lem:AHT}.
\end{proof}

\begin{proof}[Proof of \cref{lem:vanish}]
From the divergence theorem, we obtain
\ba{
\int_A\langle u,\nabla\phi_d(z-x)\rangle dz=\sum_{F_j:\ \text{facet of}\ A}(u\cdot v_j)\int_{F_j}\phi_d(z-x)\mcl H^{d-1}(dz),
}
where $\mcl H^{d-1}$ is the $(d-1)$-dimensional Hausdorff measure. 
Recall $A$ has at most $d$ facets.
Therefore, we complete the proof once we show that
\be{
\int_{F_j}\phi_d(z-x)\mcl H^{d-1}(dz)\leq \phi_1(\kappa)
}
for any $F_j$ and $x\notin A(\kappa)\setminus A(-\kappa)$. Observe that we have $x\notin A(\kappa)$ or $x\in A(-\kappa)$. We separately consider these two cases.

\noindent\textbf{Case 1:} $x\notin A(\kappa)$.  
In this case, we have $x\cdot v_k>b_k+\kappa$ for some $k\in\{1,\dots,p\}$. 
Take vectors $w_1,\dots,w_{d-1}\in\mathbb R^d$ such that $w_1,\dots,w_{d-1},v_k$ constitute an orthonormal basis of $\mathbb R^d$. Set $U:=(w_1,\dots,w_{d-1})\in\mathbb R^{d\times(d-1)}$ and $\bar U:=(U,v_k)\in\mathbb R^{d\times d}$. 
By construction, $\bar U$ is an orthogonal matrix. Hence, for any $z\in\mathbb R^d$, 
\[
|z-x|^2=|\bar U^\top(z-x)|^2=|U^\top(z-x)|^2+|v_k\cdot(z-x)|^2.
\]
Therefore, if $z\in A$, then $v_k\cdot(z-x)\leq-\kappa$ and thus $|z-x|^2\geq |U^\top(z-x)|^2+\kappa^2$. Since $F_j\subset A$, we conclude
\ba{
\int_{F_j}\phi_d(z-x)\mcl H^{d-1}(dz)\leq \phi_1(\kappa)\int_{F_j}\phi_{d-1}(U^\top(z-x))\mcl H^{d-1}(dz).
}
Then, by a change of variable\footnote{For example, applying Theorem 3.9 in \cite{EvGa15} with $n=d-1,m=d,f(y)=Uy$ and $g(y)=\phi_{d-1}(y-U^\top x)1_{U^\top F_j}(y)$.}, we obtain
\ba{
\int_{F_j}\phi_d(z-x)\mcl H^{d-1}(dz)\leq \phi_1(\kappa)\int_{U^\top F_j}\phi_{d-1}(y-U^\top x)dy
\leq\phi_1(\kappa).
}

\noindent\textbf{Case 2:} $x\in A(-\kappa)$. Define the matrices $U$ and $\bar U$ as in Case 1 with $k=j$. 
Observe that we have for any $z\in F_j$
\[
\phi_{d}(z-x)=\phi_d(\bar U^\top(z-x))=\phi_{d-1}(U^\top z-U^\top x)\phi_1(b_j-x\cdot v_j).
\]
Since $x\in A(-\kappa)$, we have $b_j-x\cdot v_j\geq\kappa$. Hence 
\ba{
\int_{F_j}\phi_d(z-x)\mcl H^{d-1}(dz)\leq\phi_1(\kappa)\int_{F_j}\phi_{d-1}(U^\top(z-x))\mcl H^{d-1}(dz).
}
The remaining argument is the same as in Case 1. 

The third-derivative bound \eq{eq:vanish3} follows from a similar argument from \eq{eq:thirdderi} and we omit the details.
\end{proof}

\section{Appendix}\label{sec:app}

\subsection{Regularity of convex polytope}

\begin{lemma}\label{lem:disjoint}
Under assumptions \cref{rankassumption1,rankassumption2,rankassumption3}, we have 
\ben{\label{lem:disjoint1}
\rel(F_{j})\cap \rel(F_{j'})=\emptyset \  \text{for}\   j \neq j',
}
\ben{\label{lem:disjoint2}
\rel(F_{jk})\cap \rel(F_{j'k'})=\emptyset\  \text{for}\  \{ j,k \}\neq \{ j',k' \},
}
and
\ben{\label{lem:disjoint3}
\rel(F_{jkl})\cap \rel(F_{j'k'l'})=\emptyset\ \text{for}\  \{ j,k,l \}\neq \{ j',k',l' \},
}
where $\rel$ denotes the relative interior.
\end{lemma}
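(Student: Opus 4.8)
The plan is to reduce all three claims to a single rank computation, by invoking two elementary facts about the faces of the polyhedron $A=\bigcap_{i=1}^{d}H_i^{-}$, where $H_i:=\{x:v_i\cdot x=b_i\}$ and $H_i^{-}:=\{x:v_i\cdot x\le b_i\}$. First, the relative interiors of the faces of $A$ partition $A$, so two faces whose relative interiors intersect must coincide. Second, if $x_0$ lies in the relative interior of a face $F$, then the tight set $T:=\{i:v_i\cdot x_0=b_i\}$ is the same for all points of $\rel(F)$, one has $F=A\cap\bigcap_{i\in T}H_i$ and $\operatorname{aff}(F)=\bigcap_{i\in T}H_i$, and hence $\dim F=d-\rank\{v_i:i\in T\}$; both are standard and follow by observing that near $x_0$ the constraints outside $T$ hold strictly. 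Since $F_{jk}$ is, by construction, a facet of the facet $F_j$ and $F_{jkl}$ a facet of $F_{jk}$, each set $F_I$ in the statement — with $I=\{j\}$, $\{j,k\}$, or $\{j,k,l\}$ — is a face of $A$ of codimension exactly $r:=|I|\in\{1,2,3\}$; so its common tight set $T$ satisfies $T\supseteq I$ and $\rank\{v_i:i\in T\}=r$.

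Suppose now $\rel(F_I)\cap\rel(F_{I'})\ne\emptyset$ for two such faces with $I\ne I'$ and $|I|=|I'|=r$. By the first fact $F_I=F_{I'}$, so both share the tight set $T$, which therefore contains $I\cup I'$; since $I$ and $I'$ are distinct sets of equal size, $I'\setminus I\ne\emptyset$, and we fix $m\in(I'\setminus I)\subseteq(T\setminus I)$. Consider the $r+1\le 4$ distinct indices $S:=I\cup\{m\}\subseteq T$. On the one hand $\{v_i:i\in S\}$ lies in the $r$-dimensional space $\operatorname{span}\{v_i:i\in T\}$, so $\rank\{v_i:i\in S\}\le r<r+1=|S|$. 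On the other hand the linear system $\{v_i\cdot x=b_i:i\in S\}$ is consistent (it holds on $\operatorname{aff}(F_I)\ne\emptyset$), whence, by the rank criterion for consistency, $\rank\{(v_i,b_i):i\in S\}=\rank\{v_i:i\in S\}$. Thus $S$ is a set of $r+1\le 4$ indices whose direction vectors are linearly dependent while adjoining the offsets $b_i$ does not increase the rank — exactly the configuration forbidden by \eqref{rankassumption1}, \eqref{rankassumption2}, or \eqref{rankassumption3} according to whether $r=1,2,3$ (these conditions are invariant under permuting the indices). This contradiction yields \eqref{lem:disjoint1}, \eqref{lem:disjoint2}, and \eqref{lem:disjoint3} simultaneously.

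The delicate point — the step I expect to be the main obstacle — is the assertion that every iterated facet $F_I$ has codimension exactly $r$, i.e.\ $\rank\{v_i:i\in T\}=r$ and not larger. This is where the reading of ``$F_{jk}$ is a facet of $F_j$'' enters: a facet of a $k$-dimensional polytope is $(k-1)$-dimensional, so $\dim F_{jk}=d-2$ and $\dim F_{jkl}=d-3$, which, combined with $\operatorname{aff}(F_I)=\bigcap_{i\in T}H_i$, forces $\rank\{v_i:i\in T\}=r$. (If instead $F_{jk}$ denoted the possibly lower-dimensional set $F_j\cap F_k$, the argument above would still dispose of the subcase $v_m\in\operatorname{span}\{v_j,v_k\}$ but leave open the subcase $v_m\notin\operatorname{span}\{v_j,v_k\}$; for a genuine facet this subcase cannot arise, since then $\operatorname{span}\{v_i:i\in T\}$ is two-dimensional and already spanned by the linearly independent pair $v_j,v_k$, so $v_m$ must lie in it.) The remaining bookkeeping — that the relative-interior/tight-set dictionary is the one used above, and that a facet of $F_{I\setminus\{i\}}$ is again a face of $A$ — is routine from the description of the faces of $A$ as the sets $A\cap\bigcap_{i\in S}H_i$, $S\subseteq\{1,\dots,d\}$.
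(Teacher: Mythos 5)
Your proof is correct, but it takes a genuinely different, more unified route than the paper's. The paper proves only \eqref{lem:disjoint2} in detail (declaring \eqref{lem:disjoint1} and \eqref{lem:disjoint3} ``similar'') and splits into cases on whether $\operatorname{span}\{v_j,v_k\}=\operatorname{span}\{v_{j'},v_{k'}\}$: if so, an extra normal $v_{j'}\notin\{v_j,v_k\}$ lies in the two-dimensional span and \eqref{rankassumption2} makes the three-equation system infeasible; if not, a direction $u\in\operatorname{span}\{v_{j'},v_{k'}\}^\perp$ with $v_j\cdot u>0$ is chosen, and perturbing a hypothetical common point $a$ to $a+\epsilon u$ stays in $\rel(F_{j'k'})$ but violates the $j$-th constraint of $A$. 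You instead invoke the face-lattice dictionary (relative interiors of faces partition $A$; $\operatorname{aff}(F)=\bigcap_{i\in T}H_i$ for the tight set $T$; $\dim F=d-\rank\{v_i:i\in T\}$) and observe that since $F_I$ is an iterated facet of codimension exactly $|I|$, the tight normals span only an $|I|$-dimensional space, so any extra tight index produces a consistent but rank-deficient subsystem of $|I|+1\le4$ constraints, forbidden by the matching rank assumption. This absorbs the paper's second case into the first and treats all three codimensions uniformly. The trade-off: the paper's argument is more elementary and self-contained (direct perturbation, no polyhedral combinatorics); yours is shorter and uniform but imports standard-but-nontrivial facts about faces of polyhedra, and you correctly flag the one point requiring care, namely that the iterated facet indeed has codimension exactly $|I|$, so the tight-normal rank cannot overshoot.
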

\begin{proof}
We only prove \eq{lem:disjoint2} here. The proofs of the other two statements are similar.

If $span \{ v_j,v_k \}=span \{ v_{j'},v_{k'} \}$, where $span$ denotes the subspace generated by the vectors, then
either $v_{j'}$ or $v_{k'}$ is not in $\{v_j, v_k\}$. Say $v_{j'}\notin \{v_j, v_k\}$.
Then
by assumptions \cref{rankassumption1,rankassumption2,rankassumption3}, the following equation 
\begin{equation*}
  \begin{aligned}
    v_j \cdot x=b_j,\quad  v_k \cdot x=b_k,\quad v_{j'} \cdot x=b_{j'}
  \end{aligned}
\end{equation*}
does not have any solution, so $F_{jk}$ and $F_{j'k'}$ are disjoint in this case. 

If $span \{ v_j,v_k \}\neq span \{ v_{j'},v_{k'} \}$, then $span \{ v_j,v_k \}^\perp\neq span \{ v_{j'},v_{k'} \}^\perp$, where $V^\perp$ denotes the orthogonal complement of $V$. 
Hence, there must exist a unit vector $u\in span \{ v_{j'},v_{k'} \}^\perp$ such that $v_j \cdot u>0$ or $v_k \cdot u>0$. Without loss of generality, let $v_j \cdot u>0$. Now, assume that there exists a point $a\in \rel(F_{jk})\cap \rel(F_{j'k'})$.  Then by the definition of relative interior, we can find a small number $\epsilon>0$ such that,  $a+\epsilon u \in \rel (F_{j'k'})$. Because $a \in F_{jk}$, we have $v_{j} \cdot a=b_{j}$ and $v_{j} \cdot(a+\epsilon u)>b_j $, which means that $a+\epsilon u \not\in A$. This contradicts with $a+\epsilon u \in \rel (F_{j'k'})$.
\end{proof}

\subsection{Disjointness of outer areas}

\begin{lemma}\label{lem:Sjdis}
Under assumptions \cref{rankassumption1,rankassumption2,rankassumption3}, let 
$$S_{j}:=\{y+s v_j: y\in \rel(F_{j}), s> 0\}.$$
Then, $S_{j}$ and $S_{k}$ are disjoint for any $j\neq k$. 
\end{lemma}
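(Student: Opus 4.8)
The plan is to argue by contradiction. Suppose $z\in S_j\cap S_k$ for some $j\ne k$. By the definition of the outer areas, I can then write $z=y+sv_j$ with $y\in\rel(F_j)$ and $s>0$, and simultaneously $z=y'+tv_k$ with $y'\in\rel(F_k)$ and $t>0$. The only facts I will use are that $F_j\subseteq A$ and $F_k\subseteq A$ (so $y$ and $y'$ satisfy all the inequalities $x\cdot v_i\le b_i$ defining $A$), that $y\cdot v_j=b_j$ and $y'\cdot v_k=b_k$ (membership in the respective facets), and the normalization $|v_j|=|v_k|=1$.

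The first step is to extract two scalar inequalities relating $s$ and $t$. On one hand, $z\cdot v_k=y'\cdot v_k+t|v_k|^2=b_k+t$. On the other hand, $z\cdot v_k=y\cdot v_k+s(v_j\cdot v_k)\le b_k+s(v_j\cdot v_k)$, since $y\in F_j\subseteq A$. Comparing the two gives $t\le s(v_j\cdot v_k)$, and because $s,t>0$ this already forces $v_j\cdot v_k>0$ (in particular the case $v_k=-v_j$ is eliminated at this point). Exchanging the roles of $j$ and $k$ in exactly the same computation yields $s\le t(v_j\cdot v_k)$.

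The second step is to combine these: multiplying the two inequalities and cancelling $st>0$ gives $(v_j\cdot v_k)^2\ge 1$. Since $|v_j|=|v_k|=1$, the Cauchy--Schwarz inequality forces $v_k=\pm v_j$, and together with $v_j\cdot v_k>0$ this means $v_k=v_j$, contradicting the hypothesis that $v_1,\dots,v_d$ are distinct. Hence $S_j\cap S_k=\emptyset$.

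I do not expect a genuine obstacle here; the argument is short. The only points that need care are getting the two inequalities to point in the correct direction --- which is where $y,y'\in A$ and the unit length of $v_j,v_k$ enter --- and observing that the argument uses only $F_j,F_k\subseteq A$, so the regularity assumptions \eqref{rankassumption1}--\eqref{rankassumption3}, although in force, are not actually needed here (and the claim is vacuous if either facet has empty relative interior). This same scheme, with heavier bookkeeping over the faces $F_{jk}$ and, in turn, $F_{jkl}$, is what I would use afterward for the disjointness of the $S_{jk}$'s and of the $S_{jkl}$'s.
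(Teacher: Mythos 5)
Your proof is correct, and it takes a genuinely different route from the paper's. The paper argues by picking a point $x=a_j+sv_j=a_k+tv_k$, computing the single scalar $(x-a_j)\cdot(a_k-a_j)$ in two ways to get $(x-a_j)\cdot(a_k-a_j)\le 0$ and $(x-a_j)\cdot(a_k-a_j)\ge|a_k-a_j|^2$, and then invoking \cref{lem:disjoint} (which rests on the rank assumptions \eqref{rankassumption1}--\eqref{rankassumption3}) to conclude $a_j\ne a_k$ and obtain the strict contradiction. Your proof instead projects the common point onto $v_j$ and $v_k$ separately, deriving $t\le s(v_j\cdot v_k)$ and $s\le t(v_j\cdot v_k)$, and closes with Cauchy--Schwarz to force $v_j=v_k$. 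This is more elementary in a precise sense: it uses only $F_j,F_k\subseteq A$, the defining equations of the facets, the unit normalization, and the distinctness of the normals; it does not invoke the disjointness of relative interiors, and therefore does not need the rank assumptions at all for this lemma. (As a side remark, even in the paper's argument the case $a_j=a_k$ could have been ruled out directly --- it would give $sv_j=tv_k$, hence $v_j=v_k$ --- but the paper prefers the uniform appeal to \cref{lem:disjoint} since it is reused for the $S_{jk}$ and $S_{jkl}$ disjointness.) One caution on your closing remark: the clean projection-onto-$v_k$ trick does not port as directly to the two- and three-index cases, where the paper's geometric scheme of dotting against $a_{j'k'}-a_{jk}$ (combined with \cref{lem:disjoint}) handles the extra degrees of freedom more cleanly; you would need the bookkeeping you anticipate, and it is not obvious it stays this short.
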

\begin{proof}
\if0
  Let $H_{j}:=\{x\in \IR^d: x\cdot v_j=b_j\}\supset F_{j}$, $B_{j}:=\{ y+s v_{j}, s \leq 0, y\in H_{j} \}$. We claim $A\subset B_{j}$. In fact, 
$$B^c_{j}=\{ y+s v_{j}: s > 0, y\in H_j \}.$$ For any point $q = q_{j}+s v_{j}\in B_{j}^{c}$, where $s>0,q_{j}\in H_{j}$, we have 
\begin{equation}
   \label{eq:disj1}
  \begin{aligned}
  q \cdot v_{j}= b_{j} + s> b_j.
  \end{aligned}
\end{equation}
By the definition of $A$, we have $q\notin A$, and hence $A\subset B_{j}$. 

For any point $a\in S_{j}$, $a$ can be represented as $a_{j}+s v_j$ where $a_{j}\in \rel(F_{j})$ and $s>0$. For any point $q\in A$, $q$ can be represented as $q_{j}+t v_j$ where $q_{j}\in H_{j}$ and $t\leq 0$. Therefore, for any point $q$ in $A$, we have
\begin{equation}
   \label{eq:disj2}
  \begin{aligned}
    (a-a_{j})\cdot(q-a_{j})\leq 0.
  \end{aligned}
\end{equation}
Assume that $S_j$ and $S_k$ intersect at point $a$, then $a=a_j+s v_j$ and $a=a_k+t v_k$, where $a_j\in \rel(F_{j})$, $a_k\in  \rel(F_{k})$ and $s,t>0$. By \cref{lem:disjoint1}, we have $F_{j}$ and $F_k$ are disjoint, hence $a_{j}\neq a_{k}$. Since $a$, $a_{j}$ and $a_{k}$ are three distinct points in $\IR^d$, we must have $(a-a_{j})\cdot(a_{k}-a_{j})> 0$ or $(a-a_{k})\cdot(a_{j}-a_{k})>0$, which contradicts with \cref{eq:disj2}.
\fi
To obtain a contradiction, suppose that there exists a vector $x\in S_j\cap S_k$. Then we can write $x=a_j+sv_j=a_k+tv_k$ for some $a_j\in \rel(F_{j}),a_k\in \rel(F_{k})$ and $s,t>0$. 
Using the first expression of $x$, we have
\ba{
(x-a_j)\cdot(a_k-a_j)=sv_j\cdot(a_k-a_j)=s(v_j\cdot a_k-b_j)\leq 0,
}
where the second equality follows by $a_j\in\rel(F_j)\subset F_j$ and the last inequality by $a_k\in\rel(F_k)\subset A$. 
In the meantime, using the second expression of $x$, we have
\ba{
(x-a_j)\cdot(a_k-a_j)=|a_k-a_j|^2+tv_k\cdot(a_k-a_j)
=|a_k-a_j|^2+t(b_k-v_k\cdot a_j)\geq |a_k-a_j|^2,
}
where the second equality follows by $a_k\in\rel(F_k)\subset F_k$ and the last inequality by $a_j\in\rel(F_j)\subset A$. Since $\rel(F_j)\cap\rel(F_k)=\emptyset$ by \cref{lem:disjoint1}, we have $a_j\neq a_k$, so $(x-a_j)\cdot(a_k-a_j)\geq|a_k-a_j|^2>0$, a contradiction. 
\end{proof}

\begin{lemma}\label{sjkdisjoint}
Under assumptions \cref{rankassumption1,rankassumption2,rankassumption3}, for $S_{jk}$ defined in \cref{eq:Sjk}, we have $S_{jk}\cap S_{j'k'}=\emptyset$, for any $\{ j,k \}\neq \{ j',k' \}$.
\end{lemma}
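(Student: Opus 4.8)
The plan is to replicate the argument of \cref{lem:Sjdis}, replacing the single outward direction $v_j$ used there by the two-dimensional outward cone $\{sv_j+tv_k:s,t>0\}$ attached to the codimension-two face $F_{jk}$. I would argue by contradiction: suppose $x\in S_{jk}\cap S_{j'k'}$ for some $\{j,k\}\neq\{j',k'\}$, so that $x=y+sv_j+tv_k=y'+s'v_{j'}+t'v_{k'}$ with $y\in\rel(F_{jk})$, $y'\in\rel(F_{j'k'})$ and $s,t,s',t'>0$. The quantity I would track is the inner product $(x-y)\cdot(y'-y)$, estimated in two ways using the two representations of $x$.

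From the first representation, $(x-y)\cdot(y'-y)=s\,v_j\cdot(y'-y)+t\,v_k\cdot(y'-y)$. Since $F_{jk}$ is the face $F_j\cap F_k$ of $A$, every $z\in F_{jk}$ satisfies $v_j\cdot z=b_j$ and $v_k\cdot z=b_k$, whereas $y'\in\rel(F_{j'k'})\subset A$ obeys $v_j\cdot y'\le b_j$ and $v_k\cdot y'\le b_k$; hence both summands are nonpositive and $(x-y)\cdot(y'-y)\le 0$. From the second representation, $(x-y)\cdot(y'-y)=|y'-y|^2+s'\,v_{j'}\cdot(y'-y)+t'\,v_{k'}\cdot(y'-y)$, and now the roles are reversed: $v_{j'}\cdot y'=b_{j'}$ and $v_{k'}\cdot y'=b_{k'}$, while $y\in\rel(F_{jk})\subset A$ gives $v_{j'}\cdot y\le b_{j'}$ and $v_{k'}\cdot y\le b_{k'}$, so the last two summands are nonnegative and $(x-y)\cdot(y'-y)\ge|y'-y|^2$. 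Chaining the two estimates yields $|y'-y|^2\le 0$, i.e.\ $y=y'$. Since $\{j,k\}\neq\{j',k'\}$, \cref{lem:disjoint2} gives $\rel(F_{jk})\cap\rel(F_{j'k'})=\emptyset$, contradicting $y=y'\in\rel(F_{jk})\cap\rel(F_{j'k'})$.

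I do not anticipate a genuine obstacle: the computation is a verbatim two-direction analogue of \cref{lem:Sjdis}, and the only structural inputs used—that $F_{jk}\subset A$ lies in the two bounding hyperplanes $\{v_j\cdot z=b_j\}$ and $\{v_k\cdot z=b_k\}$, and that the relative interiors of distinct codimension-two faces are disjoint—are already supplied by the regularity assumptions \cref{rankassumption1,rankassumption2,rankassumption3} via \cref{lem:disjoint}. The only mild care needed is the bookkeeping of which face-point ($y$ or $y'$) each expansion is centered at, so that the two sign conclusions point in opposite directions. The same scheme, now with three outward directions $v_j,v_k,v_l$ attached to $F_{jkl}$ and using \cref{lem:disjoint3}, will handle \cref{sjkldisjoint}.
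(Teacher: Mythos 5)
Your proof is correct and is essentially identical to the paper's: both argue by contradiction, take the inner product $(x-y)\cdot(y'-y)$, estimate it as $\le 0$ using the first representation of $x$ and as $\ge |y'-y|^2$ using the second, and then invoke \cref{lem:disjoint2} to rule out $y=y'$. The only cosmetic difference is that the paper first notes $a_{jk}\neq a_{j'k'}$ and then concludes $|a_{j'k'}-a_{jk}|^2>0$, while you first conclude $y=y'$ and then contradict disjointness; the content is the same.
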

\begin{proof}
\if0
Let $T_{jk}$ be the smallest subspace that contains $v_j$ and $v_k$, let $x_j$ be the unit vector in $T_{jk}$  such that $x_j \cdot v_j=0$ and $x_j \cdot v_k<0$, and let $x_k$ be the unit vector in $T_{jk}$ such that $x_k \cdot v_k=0$ and $x_k \cdot v_j<0$. This is possible because $v_k\neq \pm v_j$ (otherwise $S_{jk}$ does not exist). Let $H_{jk}$ be the $(d-2)$-dimensional subspace containing $F_{jk}$. Set $B_{jk}:=\{ y+s x_j+t x_k: y\in H_{jk}, s\geq 0, t\geq 0\}$ and $A_{jk}:=\{ x\in \IR^d: x \cdot v_j\leq b_j , x \cdot v_{k}\leq b_k\}$. Then for any point $q=q_{jk}+t x_{j} + s x_k$ in $B_{jk}^{c}$, where $q_{jk}\in H_{jk}$, $s<0$ or $t<0$, we can verify that $q \cdot v_j >b_j$ or $q \cdot v_k >b_k$, so $q\notin A_{jk}$. Thus $A\subset A_{jk}\subset B_{jk}$. 

Let $a=a_{jk}+s_a v_j +t_a v_k$ be a point in $S_{jk}$, where $a_{jk}\in \rel(F_{jk})$ and $s,t>0$. 
Because $A\subset B_{jk}$, any point $q\in A$ can be expressed as $q= q_{jk}+t_q x_{j} + s_q x_k \in A$, where $q_{jk}\in H_{jk}$ and $s_q,t_q\geq 0$. We then have
\begin{equation}
   \label{eq:disj3}
  \begin{aligned}
 (a-a_{jk})\cdot (q-a_{jk})\leq 0 .
  \end{aligned}
\end{equation}

Assume that $a\in S_{jk}\cap S_{j'k'}$. Then $a=a_{jk}+s_{jk} v_j+ t_{jk} v_k$ and $a=a_{j'k'}+s_{j'k'} v_j+ t_{j'k'} v_k$, where $a_{jk}\in \rel(F_{jk})$, $a_{j'k'}\in \rel(F_{j'k'})$ and $s_{jk}, t_{jk}, s_{j'k'}, t_{j'k'}>0$. By \cref{lem:disjoint2}, we have $\rel(F_{jk})\cap \rel(F_{j'k'})=\emptyset$, which implies $a_{jk}\neq a_{j'k'}$, and $a$, $a_{jk}$ and $a_{j'k'}$ are three distinct points in $\IR^d$. This means either $(a-a_{jk})\cdot (a_{j'k'}-a_{jk})$ or $(a-a_{j'k'})\cdot (a_{jk}-a_{j'k'})$ is positive, which contradicts with \cref{eq:disj3}. 
\fi
To obtain a contradiction, suppose that there exists a vector $x\in S_{jk}\cap S_{j'k'}$. Then we can write $x=a_{jk}+sv_j+tv_k=a_{j'k'}+s'v_{j'}+t'v_{k'}$ for some $a_{jk}\in \rel(F_{jk}),a_{j'k'}\in \rel(F_{j'k'})$ and $s,t,s',t'>0$. 
Using the first expression of $x$, we have
\ba{
(x-a_{jk})\cdot(a_{j'k'}-a_{jk})
&=(sv_j+tv_k)\cdot(a_{j'k'}-a_{jk})\\
&=s(v_j\cdot a_{j'k'}-b_j)+t(v_k\cdot a_{j'k'}-b_k)\leq 0,
}
where the second equality follows by $a_{jk}\in\rel(F_{jk})\subset F_{jk}$ and the last inequality by $a_{j'k'}\in\rel(F_{j'k'})\subset A$. 
In the meantime, using the second expression of $x$, we have
\ba{
(x-a_{jk})\cdot(a_{j'k'}-a_{jk})
&=|a_{j'k'}-a_{jk}|^2+(s'v_{j'}+t'v_{k'})\cdot(a_{j'k'}-a_{jk})\\
&=|a_{j'k'}-a_{jk}|^2+s'(b_{j'}-v_{j'}\cdot a_{jk})+t'(b_{k'}-v_{k'}\cdot a_{jk})\\
&\geq |a_{j'k'}-a_{jk}|^2,
}
where the second equality follows by $a_{j'k'}\in\rel(F_{j'k'})\subset F_{j'k'}$ and the last inequality by $a_{jk}\in\rel(F_{jk})\subset A$. Since $\rel(F_{jk})\cap\rel(F_{j'k'})=\emptyset$ by \cref{lem:disjoint2}, we have $a_{jk}\neq a_{j'k'}$, so $(x-a_{jk})\cdot(a_{j'k'}-a_{jk})\geq|a_{j'k'}-a_{jk}|^2>0$, a contradiction. 
\end{proof}

\begin{lemma}
    \label{sjkldisjoint}
    Under assumptions \cref{rankassumption1,rankassumption2,rankassumption3}, for $S_{jkl}$ defined in \cref{eq:Sjkl}, we have $S_{jkl}\cap S_{j'k'l'}=\emptyset$, for any $\{ j,k,l \}\neq \{ j',k' ,l'\}$. 
\end{lemma}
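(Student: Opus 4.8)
The statement is the three-facet analogue of \cref{lem:Sjdis} and \cref{sjkdisjoint}, and I expect the proof to follow exactly the same contradiction scheme, now carrying three positive coefficients instead of one or two. So the plan is: assume $x\in S_{jkl}\cap S_{j'k'l'}$, write $x$ in its two representations, and compare two evaluations of the inner product $(x-a_{jkl})\cdot(a_{j'k'l'}-a_{jkl})$.

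\textbf{Key steps.} First I would suppose, for contradiction, that there is a vector $x\in S_{jkl}\cap S_{j'k'l'}$, so that by \cref{eq:Sjkl} we may write
\[
x=a_{jkl}+sv_j+tv_k+rv_l=a_{j'k'l'}+s'v_{j'}+t'v_{k'}+r'v_{l'}
\]
for some $a_{jkl}\in\rel(F_{jkl})$, $a_{j'k'l'}\in\rel(F_{j'k'l'})$ and $s,t,r,s',t',r'>0$. Next I would use the first expression of $x$ together with the facts $a_{jkl}\in\rel(F_{jkl})\subset F_j\cap F_k\cap F_l$ (so $v_j\cdot a_{jkl}=b_j$, $v_k\cdot a_{jkl}=b_k$, $v_l\cdot a_{jkl}=b_l$) and $a_{j'k'l'}\in\rel(F_{j'k'l'})\subset A$ (so $v_j\cdot a_{j'k'l'}\leq b_j$, etc.) to get
\[
(x-a_{jkl})\cdot(a_{j'k'l'}-a_{jkl})=s(v_j\cdot a_{j'k'l'}-b_j)+t(v_k\cdot a_{j'k'l'}-b_k)+r(v_l\cdot a_{j'k'l'}-b_l)\leq 0.
\]
Then I would use the second expression of $x$ together with $a_{j'k'l'}\in\rel(F_{j'k'l'})\subset F_{j'}\cap F_{k'}\cap F_{l'}$ and $a_{jkl}\in\rel(F_{jkl})\subset A$ to get
\[
(x-a_{jkl})\cdot(a_{j'k'l'}-a_{jkl})=|a_{j'k'l'}-a_{jkl}|^2+s'(b_{j'}-v_{j'}\cdot a_{jkl})+t'(b_{k'}-v_{k'}\cdot a_{jkl})+r'(b_{l'}-v_{l'}\cdot a_{jkl})\geq|a_{j'k'l'}-a_{jkl}|^2.
\]
Finally, since $\{j,k,l\}\neq\{j',k',l'\}$, \cref{lem:disjoint} (specifically \cref{lem:disjoint3}) gives $\rel(F_{jkl})\cap\rel(F_{j'k'l'})=\emptyset$, hence $a_{jkl}\neq a_{j'k'l'}$ and $|a_{j'k'l'}-a_{jkl}|^2>0$, contradicting the first inequality.

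\textbf{Main obstacle.} There is no real obstacle here; the argument is a routine adaptation. The only point that needs a line of care is recording that $a_{jkl}$, lying in $\rel(F_{jkl})$, saturates all three defining inequalities $v_j\cdot x=b_j$, $v_k\cdot x=b_k$, $v_l\cdot x=b_l$ (because $F_{jkl}$ is by construction a facet of $F_{jk}$, itself a facet of $F_j$), while simultaneously $F_{jkl}\subset A$ so every other defining inequality of $A$ is merely satisfied weakly; the same for the primed indices. Everything else is the sign bookkeeping already carried out in \cref{lem:Sjdis} and \cref{sjkdisjoint}.
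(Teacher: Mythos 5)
Your proof is correct and is exactly the natural extension of the paper's proof of \cref{sjkdisjoint} to three indices; the paper itself omits this proof, stating only that it is ``similar to that of \cref{sjkdisjoint},'' and your argument is precisely what that remark intends.
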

The proof of \cref{sjkldisjoint} is similar to that of \cref{sjkdisjoint} and is omitted.

\subsection{Proof of (\ref{eq:Fjkl2})}\label{sec:Fjkl2}

The case $\text{dist}(0, F_{jkl})>\sqrt{6 \log d}$ can be dealt with easily using the union bound as in \eq{eq:R1jtruncation} (we changed the threshold because there are $O(d^3)$ terms in the summation now). 
In the following, we consider the case $\text{dist}(0, F_{jkl})\leq\sqrt{6\log d}$.

Denote by $\bar x_{jkl}$ the projection of 0 to the $(d-3)$-dimensional affine subspace containing $F_{jkl}$. Assume $\bar x_{jkl}\ne 0$ for ease of discussion. Otherwise, the desired result follows from a similar and simpler argument. We now define a set $\bar S_{jkl}$ by decreasing and (possibly) rotating $S_{jkl}$ around $\bar x_{jkl}$ in the three-dimensional subspace spanned by $v_j, v_k$ and $v_l$ as follows. Consider the two-dimensional sphere centered at $\bar x_{jkl}$ with unit radius in $span\{v_j, v_k, v_l\}$. Extend the line from 0 to $\bar x_{jkl}$ until it intersects with the sphere, call the intersection the (north) pole $\bf p$. The three points ${\bf b}:=\bar x_{jkl}+v_j$, ${\bf c}:=\bar x_{jkl}+v_k$ and ${\bf d}:=\bar x_{jkl}+v_l$ form a spherical triangle. 
Find the longest edge of this triangle, say $\bf bc$ (we use $\bf bc$ to denote both the geodesic from $\bf b$ to $\bf c$ and its length). 
We assume $\bf d$ has a unique geodesic projection point $\bf d'$ to the great circle containing $\bf b$ and $\bf c$. Otherwise, we perturb $\bf d$ by an infinitesimal amount and use an approximation argument. Note that ${\bf dd'}<\pi/2$.
We first consider the case that $\bf d'$ is on the geodesic from the midpoint of $\bf bc$ to ${\bf \hat c}$ (excluding ${\bf \hat c}$), where ${\bf \hat c}$ is the opposite point of $\bf c$ on the sphere.
Let $\bf e$ be the midpoint of $\bf bc$ and let $\bf f$ be on $\bf ec$ such that ${\bf ef}=\alpha/100$ (this is possible because ${\bf bc}\geq \alpha$). Move $\bf e$ and $\bf f$ towards the inside of the triangle in spherically orthogonal directions to $\bf bc$ until they intersect with $\bf dc$ at $\bf e'$ and $\bf f'$, respectively. If ${\bf bc}>\pi/10$, then ${\bf d'c}<20{\bf ec}$. From Napier's rules for right spherical triangles, we have
\ben{\label{eq:napier}
\tan({\bf ee'})/\tan({\bf dd'})=\sin({\bf ec})/\sin({\bf d'c}).
} 
Note that $0<{\bf ec}<{\bf d'c}<\pi$. If ${\bf ee'}\geq {\bf dd'}$, then ${\bf ee'}/{\bf dd'}>{\bf ec}/{\bf d'c}$.
If ${\bf ee'}< {\bf dd'}$, then from \eq{eq:napier}, the convexity of $\tan$ on $(0,\pi/2)$ and the concavity of $\sin$ on $(0,\pi)$, we again have ${\bf ee'}/{\bf dd'}>{\bf ec}/{\bf d'c}$. This, together with the minimum angle assumption ${\bf dd'}\geq \beta$, implies ${\bf ee'}>\beta/100$. Similarly, we have ${\bf ff'}>\beta/100$. Therefore, we can find a spherical quadrilateral $\bf efhg$ inside of the triangle such that ${\bf ef}=\alpha/100$, ${\bf eg}={\bf fh}=\beta/100$ and the spherical angles at $\bf e$ and $\bf f$ are both equal to $\pi/2$. If ${\bf bc}\leq \pi/10$, then $d'$ must be on $\bf bc$ because $\bf bc$ is the longest edge of the triangle. Then, following the same argument as above, we can again find the above spherical quadrilateral $\bf efhg$.
Next, the case that $\bf d'$ is on the geodesic from the midpoint of $\bf bc$ to $\bf \hat b$ (excluding $\bf \hat b$), where $\bf \hat b$ is the opposite point of $\bf b$ on the sphere, can be considered similarly. Finally, for the case that $\bf d'$ is on the geodesic from $\bf \hat c$ to $\bf \hat b$, we can first decrease the triangle by rotating $\bf d$ towards $\bf bc$ such that its projection $\bf d'$ is on $\bf bc$ and then follow the arguments for the previous two cases. 
Let the four vertices of the spherical quadrilateral found above be $\bar x_{jkl}+v_{i,jkl},\  i=1,\dots, 4$.
If the spherical quadrilateral formed by these four points contains the pole, then do nothing. Otherwise, find the geodesic projection $a_{jkl}$ of the pole to this spherical quadrilateral (if it is not unique, pick anyone) and rotate this spherical rectangle along the geodesic from $a_{jkl}$ to the pole. 

We claim that the above rotation moves each point in the spherical quadrilateral closer to the pole. It is obviously true if the geodesic quadrilateral is not a subset of the northern hemisphere (recall the quadrilateral is sufficiently small).
If the spherical quadrilateral is a subset of northern hemisphere, we consider the spherical triangle formed by the pole $\bf p$, ${\bf a}:=a_{jkl}$, and any point $\bf q$ in the spherical quadrilateral. Let $\angle {\bf paq}$ be the (spherical) angle of the vertex $\bf a$. We have $\angle {\bf paq}\geq \pi/2$; otherwise, there is a point on the geodesic from $\bf a$ to $\bf q$ that is closer to $\bf p$, a contradiction. Let $0< {\bf pa}, {\bf qa}, {\bf pq}<\pi/2$ be the (spherical) edge length of the triangle. From the spherical law of cosines, we have
\be{
\cos({\bf pq})=\cos({\bf qa})\cos({\bf pa})+\sin({\bf pa})\sin({\bf qa})\cos(\angle {\bf paq})\leq \cos({\bf qa})\cos({\bf pa}).
}
This implies $\cos({\bf pq})<\cos({\bf qa})$, and hence ${\bf pq}>{\bf qa}$. This proves the claim.

Let $\bar x_{jkl}+\bar v_{i,jkl},\  i=1,\dots, 4$. be the four vertices of the spherical quadrilateral after above rotation. Define
\be{
\bar S_{jkl}:=\{y+s_1 \bar v_{1,jkl}+s_2 \bar v_{2,jkl}+s_3 \bar v_{3,jkl}+s_4 \bar v_{4,jkl}: y\in relint(F_{jkl}), s_1, s_2, s_3, s_4>0\}.
}
Because the above rotation moves each point in the spherical quadrilateral closer to the pole, it increases their Euclidean norms.
Therefore,
\ben{\label{eq:gammasjkl}
\gamma_d(\bar S_{jkl})\leq \gamma_d(S_{jkl}).
}
Let $u_{jkl}=\bar x_{jkl}/|\bar x_{jkl}|$ (recall we have assumed $\bar x_{jkl}\ne 0$).
Let $w_1, w_2, u_{jkl}$ be an orthonormal basis of $span\{v_j, v_k, v_l\}$ and
let 
\be{
u_{1,jkl}=w_1+\frac{1}{\sqrt{2}}u_{jkl},\ u_{2,jkl}=-\frac{1}{2}w_1+\frac{\sqrt{3}}{2}w_2+\frac{1}{\sqrt{2}}u_{jkl},\ u_{3,jkl}=-\frac{1}{2}w_1-\frac{\sqrt{3}}{2}w_2+\frac{1}{\sqrt{2}}u_{jkl}.
}
Note that the pair-wise angles of the above three (unnormalized) vectors are all equal to $\pi/2$.
Similar to \eq{eq:star1} and \eq{eq:NFjk}, we bound three standard normal densities by normal tail probabilities in three orthogonal directions to $F_{jkl}$ and obtain (recall we have assumed $\text{dist}(0, F_{jkl})\leq\sqrt{6\log d}$)
\ben{\label{Fjkl}
  \left| \int_{F_{jkl}} \phi_d(z) \mcl{H}^{d-3}(d z)\right|\leq (\text{dist}(0, F_{jkl})+1)^3 \gamma_d(N_{F_{jkl}})\leq  C (\log d)^{3/2} \gamma_d(N_{F_{jkl}}),
}
where
\ben{\label{eq:NFjkl}
N_{F_{jkl}}:=\{y+r u_{1,jkl}+s u_{2,jkl}+t u_{3,jkl}: y\in \rel(F_{jkl}), r>0, s>0, t>0\}.
}
Computing standard Gaussian probabilities by first integrating along the coordinate $u_{jkl}$, we have, with $t=|\bar x_{jkl}|$,
\be{
\frac{\gamma_d(N_{F_{jkl}})}{\gamma_d(\bar S_{jkl})}=\frac{\int_t^\infty \phi_1(u) \gamma_2[(u-t)A] du}{\int_t^\infty \phi_1(u) \gamma_2[(u-t)B] du},
}
where $A$ is an equilateral triangle with edge length $\sqrt{6}$ and centered at 0 and $B$ is a quadrilateral mapped from a small spherical quadrilateral to the tangent plane at 0, which is inside or on the boundary of the spherical quadrilateral. By construction, this small spherical quadrilateral has edge lengths proportional to $\alpha, \beta, \alpha, \beta$ and four angles all sufficiently close to $\pi/2$. This implies $B$ also has edge lengths proportional to $\alpha, \beta, \alpha, \beta$ and four angles all sufficiently close to $\pi/2$. Because Gaussian density is decreasing in the Euclidean norm of its arguments, we have $\gamma_2[(u-t)A]/\gamma_2[(u-t)B]\leq C/\alpha\beta$ and
\ben{\label{eq:star2}
\gamma_d(N_{F_{jkl}})\leq \frac{C}{\alpha\beta} \gamma_d(\bar S_{jkl})\leq \frac{C}{\alpha\beta} \gamma_d( S_{jkl}),
}
where we used \eq{eq:gammasjkl} in the last inequality.

 The desired bound \eq{eq:Fjkl2} follows from \eq{Fjkl} and \eq{eq:star2}.




\section*{Acknowledgements}

Fang X. was partially supported by Hong Kong RGC GRF 14305821 and 14304822 and a CUHK direct grant. Koike Y. was partially supported by JST CREST Grant Number JPMJCR2115 and JSPS KAKENHI Grant Numbers JP19K13668, JP22H00834, JP22H00889, ZJP22H01139. 

\bibliographystyle{apalike}
\bibliography{reference}

\end{document}